\newtheorem{Theorem}{Theorem}[section]
\newtheorem{Proposition}[Theorem]{Proposition}
\newtheorem{Lemma}[Theorem]{Lemma}
\newtheorem{Corollary}[Theorem]{Corollary}
\theoremstyle{definition}
\newtheorem{Definition}[Theorem]{Definition}
\newtheorem{Remark}[Theorem]{Remark}
\newcommand{\bTheorem}[1]{
\begin{Theorem} \label{T#1} }
\newcommand{\eT}{\end{Theorem}}
\newcommand{\bProposition}[1]{
\begin{Proposition} \label{P#1}}
\newcommand{\eP}{\end{Proposition}}
\newcommand{\bLemma}[1]{
\begin{Lemma} \label{L#1} }
\newcommand{\eL}{\end{Lemma}}
\newcommand{\bCorollary}[1]{
\begin{Corollary} \label{C#1} }
\newcommand{\eC}{\end{Corollary}}
\newcommand{\bRemark}[1]{
\begin{Remark} \label{R#1} }
\newcommand{\eR}{\end{Remark}}
\newcommand{\E}{\mathcal{E}}
\newcommand{\bDefinition}[1]{
\begin{Definition} \label{D#1} }
\newcommand{\eD}{\end{Definition}}
\newcommand{\Del}{\Delta_x}
\newcommand{\Ds}{\mathbb{D}_x}
\newcommand{\Se}{\mathbb{S}_{\rm eff}}
\newcommand{\bfphi}{\boldsymbol{\varphi}}
\newcommand{\bFormula}[1]{
\begin{equation} \label{#1}}
\newcommand{\eF}{\end{equation}}
\newcommand{\Ov}[1]{\overline{#1}}
\newcommand{\DC}{C^\infty_c}
\newcommand{\aleq}{\stackrel{<}{\sim}}
\newcommand{\vr}{\varrho}
\newcommand{\vre}{\vr_\ep}
\newcommand{\vue}{\vu_\ep}
\newcommand{\tvr}{\tilde \vr}
\newcommand{\tvu}{{\tilde \vu}}
\newcommand{\vu}{\vc{u}}
\newcommand{\vm}{\vc{m}}
\newcommand{\vc}[1]{{\bf #1}}
\newcommand{\Div}{{\rm div}_x}
\newcommand{\Grad}{\nabla_x}
\newcommand{\dx}{\,{\rm d} {x}}
\newcommand{\dt}{\,{\rm d} t }
\newcommand{\dxdt}{\dx  \dt}
\newcommand{\intO}[1]{\int_{\Omega} #1 \ \dx}
\newcommand{\vv}{\vc{v}}
\newcommand{\D}{{\rm d}}
\newcommand{\ep}{\varepsilon}
\def\softd{{\leavevmode\setbox1=\hbox{d}%
          \hbox to 1.05\wd1{d\kern-0.4ex{\char039}\hss}}}
\definecolor{Cgrey}{rgb}{0.85,0.85,0.85}
\definecolor{Cblue}{rgb}{0.50,0.85,0.85}
\definecolor{Cred}{rgb}{1,0,0}
\definecolor{fancy}{rgb}{0.10,0.85,0.10}
\newcommand\Cbox[2]{%
    \newbox\contentbox%
    \newbox\bkgdbox%
    \setbox\contentbox\hbox to \hsize{%
        \vtop{
            \kern\columnsep
            \hbox to \hsize{%
                \kern\columnsep%
                \advance\hsize by -2\columnsep%
                \setlength{\textwidth}{\hsize}%
                \vbox{
                    \parskip=\baselineskip
                    \parindent=0bp
                    #2
                }%
                \kern\columnsep%
            }%
            \kern\columnsep%
        }%
    }%
    \setbox\bkgdbox\vbox{
        \color{#1}
        \hrule width  \wd\contentbox %
               height \ht\contentbox %
               depth  \dp\contentbox
        \color{black}
    }%
    \wd\bkgdbox=0bp%
    \vbox{\hbox to \hsize{\box\bkgdbox\box\contentbox}}%
    \vskip\baselineskip%
}
\date{}
\begin{document}


\title{Generalized solutions to models of compressible viscous fluids}

\author{Anna Abbatiello\thanks{The research of A.A. is supported by Einstein Foundation, Berlin.} \and   Eduard Feireisl\thanks{The research of E.F. leading to these results has received funding from the
Czech Sciences Foundation (GA\v CR), Grant Agreement
18-12719S.} \and Anton\' \i n Novotn\' y { \thanks{The work of A.N. was supported by Brain Pool program funded by the Ministry of Science and ICT through the National Research Foundation of Korea(NRF-2019H1D3A2A01101128).}}
}


\maketitle

\centerline{Institute of Mathematics, Technische Universit\"{a}t Berlin,}
\centerline{Stra{\ss}e des 17. Juni 136, 10623 Berlin, Germany}
\centerline{anna.abbatiello@tu-berlin.de}
\centerline{and}

\centerline{Faculty of Mathematics and Physics, Charles University}
\centerline{Sokolovsk\'a 83, CZ-186 75 Prague 8, Czech Republic}
\centerline{feireisl@math.cas.cz}
\centerline{and}

\centerline{Universit\'e de Toulon, IMATH, EA 2134, }
\centerline{BP 20132, 83957 La Garde, France}
\centerline{novotny@univ-tln.fr}

\begin{abstract}

We propose a new approach to models of general compressible viscous fluids based on the concept of dissipative solutions. These are weak solutions satisfying the underlying equations modulo a defect measure. A dissipative solution coincides with the strong solution 
as long as the latter exists (weak--strong uniqueness) and they solve the problem in the classical sense as soon as they are smooth
(compatibility). 
We consider general models of compressible viscous fluids with non--linear viscosity tensor and non--homogeneous boundary conditions, for which the problem of existence of global--in--time weak/strong solutions is largely open. 

\end{abstract}

{\bf Keywords:} Compressible fluid, non--linear viscous fluid, dissipative solution, defect measure, non homogenous boundary data

\bigskip

\tableofcontents

\section{Introduction}
\label{P}

Fluids in continuum mechanics are characterized by Stokes' law 
\[
\mathbb{T} = \mathbb{S} - p \mathbb{I}
\]
relating the Cauchy stress $\mathbb{T}$ to the viscous stress $\mathbb{S}$ and the scalar pressure $p$. 
For the sake of simplicity, we ignore the thermal effects and consider the basic system of field 
equations for the fluid density $\vr = \vr(t,x)$ and the macroscopic velocity $\vu = \vu(t,x)$:

\begin{itemize}
\item {\bf conservation of mass}
\begin{equation} \label{P1}
\partial_t \vr + \Div (\vr \vu) = 0; 
\end{equation}
\item {\bf balance of linear momentum}
\begin{equation} \label{P2}
\partial_t (\vr \vu) + \Div (\vr \vu \otimes \vu) + \Grad p(\vr) = \Div \mathbb{S}; 
\end{equation}
\item {\bf balance of energy}
\begin{equation} \label{P3}
\partial_t \left( \frac{1}{2} \vr |\vu|^2 + P(\vr) \right) + 
\Div \left[ \left( \frac{1}{2} \vr |\vu|^2 + P(\vr) + p(\vr) \right) \vu \right] = \Div \left( \mathbb{S} \cdot \vu \right)
- \mathbb{S} : \Grad \vu,
\end{equation}
where $P(\vr)$ is the so--called pressure potential related to the pressure $p$, 
\begin{equation} \label{P4}
P'(\vr) \vr - P(\vr) = p(\vr).
\end{equation}
\end{itemize}

We suppose the fluid is contained in a bounded domain $\Omega \subset R^d$, $d = 2,3$, with general {\bf inflow--outflow boundary conditions}

\begin{equation} \label{P5}
\vu|_{\partial \Omega} = \vu_B,\ \vr|_{\Gamma_{in}} = \vr_B,\ 
\Gamma_{in} = \left\{ x \in \partial \Omega \ \Big| \ \vu_B \cdot \vc{n} < 0 \right\},
\end{equation}
where $\vc{n}$ is the outer normal vector to $\partial \Omega$. The viscous stress tensor $\mathbb{S}$ is related to the 
symmetric velocity gradient 
\[
\Ds \vu = \frac{\Grad \vu + \Grad \vu^t}{2}
\]
through a general {\bf implicit rheological law} 
\begin{equation} \label{P6}
\Ds \vu : \mathbb{S} = 
F(\Ds \vu) + F^*(\mathbb{S}) 
\end{equation}
for a suitable convex potential $F$ and its conjugate $F^*$. Finally, we prescribe the initial conditions
\begin{equation} \label{P7}
\vr(0, \cdot) = \vr_0, \ \vr \vu(0, \cdot) = \vm_0. 
\end{equation}

The total energy of the fluid is not conserved due to the presence of the dissipative term $\mathbb{S}: \Grad \vu$ on the right--hand side of \eqref{P3}. As a matter of fact, the equation \eqref{P3} can be deduced from \eqref{P1}, \eqref{P2} via a straightforward manipulation as long as all quantities involved are smooth enough. In the weak formulation used in the present paper, the 
equation \eqref{P3} is replaced by a suitable form of energy inequality discussed below. 

If $F$ is a proper convex l.s.c. function, the rheological relation \eqref{P6} can be interpreted 
in view of Fenchel--Young inequality as 
\[
\mathbb{S} \in \partial F(\Ds \vu) \ \Leftrightarrow \ 
\Ds \vu \in \partial F^*(\mathbb{S}). 
\] 
Note that the standard linear Newton's rheological law 
\begin{equation} \label{P8}
\mathbb{S} = \mu \Ds \vu + \lambda \Div \vu \mathbb{I}
\end{equation}
corresponds to 
\[
F(\Ds \vu) = \frac{\mu}{2} |\Ds \vu|^2 + \frac{\lambda}{2} |\Div \vu|^2,\ \mu > 0, \ \lambda + \frac{2}{d} \mu \geq 0.
\]
The resulting problem is the standard Navier--Stokes system governing the motion of a linearly viscous compressible barotropic fluid. 

The iconic example of the pressure--density relation is the isentropic state equation, 
\begin{equation} \label{P9}
p(\vr) = a \vr^\gamma,\ a > 0, \ \gamma > 1.
\end{equation}
In this case, the Navier--Stokes system \eqref{P1}, \eqref{P2}, \eqref{P8}, \eqref{P9} admits global in time weak solutions 
for $\gamma > \frac{d}{2}$, see Lions \cite{LI4} and  \cite{EF70},  for the homogeneous boundary conditions, and 
{\cite{Girinon},} \cite{ChJiNo}, {\cite{KwNo}} for general inflow--outflow boundary conditions. The existence of global weak solutions in the 
case $d=2$, $\gamma = 1$ was proved by Plotnikov and Vaigant \cite{PloWei}.  

Much less is known for a general non--linear dependence of viscosity on the velocity gradient. 
To the best of our knowledge, 
there are only two large--time existence results in the class of weak solutions in the multidimensional case: {\bf (i)} Mamontov \cite{MAM1}, \cite{MAM2} considered the case 
of exponentially growing viscosity coefficients and linear pressure $p(\vr) = a \vr$; {\bf (ii)} \cite{FeLiMa}, where the bulk viscosity 
$\lambda = \lambda(|\Div \vu|)$ becomes singular for a finite value of $|\Div \vu|$.

Motivated by \cite{AbbFei2}, \cite{BreFeiHof19}, we introduce the concept of \emph{dissipative solution} for problem \eqref{P1}--
\eqref{P7}. The dissipative solution satisfies the system of equations  
\begin{equation} \label{P10}
\begin{split}
\partial_t \vr + \Div (\vr \vu) &= 0,\\
\partial_t (\vr \vu) + \Div (\vr \vu \otimes \vu) + \Grad p(\vr) &=\Div [ \Se].
\end{split}
\end{equation}
The effective stress acting on the fluid can be decomposed as 
\begin{equation} \label{P11}
\Se = \mathbb{S} - \mathfrak{R},
\end{equation}
where the ``turbulent'' component $\mathfrak{R}$ 
is called \emph{Reynolds stress} and it is a positively semi--definite tensor. The problem is supplemented with the energy inequality 
\begin{equation} \label{P12}
\begin{split}
\frac{{\rm d}}{{\rm d}t} &\intO{\left[ \frac{1}{2} \vr |\vu - \vu_B|^2 + P(\vr) + \mathfrak{E} \right] }  \\&+ 
\intO{ \Big[ F(\Ds \vu) + F^* (\Se + \mathfrak{R}) \Big] } \dt  
+\int_{\partial \Omega} P(\vr)  \vu_B \cdot \vc{n} \ \D S_x \dt
\\
\leq
&- 
\intO{ \Big[ p(\vr)\mathbb{I} + \vr \vu \otimes \vu \Big]: \Grad \vu_B   } \dt + \intO{ {\vr} \vu  \cdot \Big(\vu_B \cdot \Grad \vu_B  \Big) } 
\dt + \int_{\Ov{\Omega}} \Se : \Grad \vu_B \dxdt.
\end{split}
\end{equation}
Here the symbol $\mathfrak{E}$ 
denotes the energy \emph{dissipation defect} directly related to the Reynolds stress $\mathfrak{R}$,
more specifically, $\mathfrak{E} \approx {\rm tr}[\mathfrak{R}]$. The velocity  
$\vu_B$ is a suitable extension 
of the boundary velocity to $\Ov{\Omega}$.
The quantities $\vr$, $\vu$ are called 
\emph{dissipative solution} if they satisfy \eqref{P10}--\eqref{P12} in the sense of distributions
for suitable $\Se$, $\mathfrak{R}$, and $\mathfrak{E}$. It is easy to show that if 
\begin{equation} \label{class}
\mathfrak{R} = \mathfrak{E} = 0,\ \mathbb{S} \in \partial F(\Ds \vu),
\end{equation}
then $[\vr, \vu]$ is the standard weak solution satisfying a variant of the energy inequality.

The exact definition of dissipative solution is given in Section \ref{W}. Then 
we plan to address the following issues:
\begin{itemize}
\item {\bf Existence.} 
In Section \ref{E}, we show that 
problem \eqref{P10}--\eqref{P12} admits global--in--time solutions for any finite energy initial data. We consider a general 
bounded Lipschitz domain $\Omega$ in view of possible applications in the analysis of numerical schemes.
\item {\bf Compatibility.} 
In Section \ref{C}, we show that dissipative solutions
of \eqref{P10}--\eqref{P12} that are continuously differentiable are classical solutions of \eqref{P1}--\eqref{P7}. In particular, 
\eqref{class} holds.
\item {\bf Weak--strong uniqueness.} A solution of \eqref{P10}--\eqref{P12} coincides with the strong solution of  \eqref{P1}--\eqref{P7} 
emanating from the same initial data as long as the latter exists, see Sections, \ref{RR}, \ref{WU}.

\end{itemize}

\section{Weak formulation}
\label{W}

We consider a bounded Lipschitz domain $\Omega \subset R^d$, $d = 2,3$. 
We suppose that the boundary conditions for the velocity are given by $\vu_B \in C^1(R^d; R^d)$.
The boundary $\partial \Omega$ is decomposed as 
\[
\partial \Omega = \Gamma_{\rm in} \cup \Gamma_{\rm out}, 
\ \Gamma_{\rm in} = \left\{ x \in \partial \Omega \ \Big|\
\ \mbox{the outer normal}\ \vc{n}(x) \ \mbox{exists, and}\ \vu_B(x) \cdot \vc{n}(x) < 0 \right\}. 
\]

\subsection{Structural restrictions on the constitutive equations}

We start by introducing the structural restrictions imposed on the 
pressure--density equation of state (EOS), and the specific form of the viscous stress.

\subsubsection{Pressure--density equation of state}

An iconic example of a barotropic EOS is the isentropic pressure--density relation 
\begin{equation}\label{S1}
p(\vr) = a \vr^\gamma,\ a > 0, \ \gamma > 1\ \mbox{yielding the pressure potential}\ P(\vr) = \frac{a}{\gamma - 1} \vr^\gamma.
\end{equation}
Here we allow for more general EOS retaining the essential features of \eqref{S1}:
\begin{equation} \label{S2}
\begin{split}
&p \in C[0, \infty) \cap C^2(0, \infty),\ 
p(0) = 0, \ p'(\vr) > 0 \ \mbox{for}\ \vr > 0; \\
&\mbox{the pressure potential}\ P 
\ \mbox{determined by}\ P'(\vr) \vr - P(\vr) = p(\vr)\ \mbox{satisfies}\
P(0) = 0,\\ &\mbox{and}\ P - \underline{a} p,\ \Ov{a} p - P\ \mbox{are convex functions 
for certain constants}\ \underline{a} > 0, \ \Ov{a} > 0.
\end{split}
\end{equation} 
As a matter of fact, certain hypotheses on the pressure can be relaxed. 
We shall discuss this issue in the concluding Section \ref{D}. 
It is easy to check that any $P$ satisfying \eqref{S2} possesses certain coercivity 
similar to \eqref{S1}. More specifically,
\begin{equation} \label{S2a}
P(\vr) \geq a \vr^\gamma \ \mbox{for certain}\ a > 0,\ \gamma > 1 \ \mbox{and all}\ \vr \geq 1.
\end{equation}
Indeed as $\Ov{a}p - P$ is a convex function and $P$ is strictly convex, we get 
\[
\Ov{a} p''(\vr) \geq P''(\vr) = \frac{p'(\vr)}{\vr},\ \vr > 0.
\]
This yields 
\[
\log'(p'(\vr)) \geq \log' \left(\vr^{\frac{1}{\Ov{a}}} \right) \ \Rightarrow \ 
p'(\vr) \geq \vr^{\frac{1}{\Ov{a}}} \ \mbox{for all}\ \vr \ \mbox{large enough;}
\] 
whence \eqref{S2a} holds for $\gamma = 1 + \frac{1}{\Ov{a}}$.

\subsubsection{Viscous stress}

The relation between the symmetric velocity gradient $\Ds \vu$ and the viscous stress $\mathbb{S}$ is determined by the choice 
of the potential $F$. We suppose that 
\begin{equation} \label{S3}
F: R^{d \times d}_{\rm sym} \to [0, \infty) \ \mbox{is a (proper) convex function},\ F(0) = 0,
\end{equation}
enjoying certain coercivity properties to render the symmetric gradient $\Ds \vu$, or at least 
its traceless part $\Ds \vu - \frac{1}{d} \Div \vu \mathbb{I}$, integrable. To make the last statement more specific, 
we introduce the class of Young functions $A$:
\begin{itemize} 
\item $A: [0, \infty) \to [0, \infty)$ convex, 
\item $A$ increasing, 
\item $A(0) = 0$.
\end{itemize}
Moreover, we shall say that $A$
satisfies $\Delta^2_2$--condition if there exist constants $a_1 > 2$, $a_2 < \infty$ such that 
\begin{equation} \label{S4}
a_1 A(z) \leq A(2z) \leq a_2 A(z) \ \mbox{for any}\ z \in [0, \infty).
\end{equation}

In addition to \eqref{S3}, we suppose that for any $R > 0$, there exists a Young function $A_R$ satisfying the 
$\Delta^2_2$--condition \eqref{S4} and such that 
\begin{equation} \label{S5}
F(\mathbb{D} + \mathbb{Q}) - F(\mathbb{D}) - \mathbb{S}: \mathbb{Q} \geq 
A_R \left( \left| \mathbb{Q} - \frac{1}{d} {\rm tr}[\mathbb{Q}] \mathbb{I} \right| \right)
\end{equation}
for all $\mathbb{D}, \mathbb{S}, \mathbb{Q} \in R^{d \times d}_{\rm sym}$ such that 
\[
|\mathbb{D}| \leq R,\ \mathbb{S} \in \partial {F}(\mathbb{D}).
\]
{Inequality (\ref{S5}) implies, among others, that $F^*(\mathbb{S})\ge 0$ for all $\mathbb{S} \in \partial {F}(\mathbb{D})$ and $\mathbb{D} \in R^{d \times d}_{\rm sym}$. This observation will be used later in order
to derive estimates on $\mathbb{S}$ from the energy inequality.}

Note that the standard Newtonian rheological law with the associated quadratic potential 
\[
F(\mathbb{D}) = \mu \left|\mathbb{D} - \frac{1}{d} {\rm tr}[\mathbb{D}] \right|^2 + \eta \left| {\rm tr}[\mathbb{D}] \right|^2, 
\ \mu > 0, \ \eta \geq 0,
\]
satisfies \eqref{S5} with 
\[
A_R(z) = A(z) = \frac{\mu}{2}z^2.
\]

For a more general $p-$potential
$$F(\mathbb{D})=\frac{\mu_0}{p}(\mu_1 + |\mathbb{D}_0|^2)^{\frac{p}{2}} \mbox{ with } \mu_0>0, \ \mu_1 \geq 0, \ p>1, $$
noticing that 
$$ F(\mathbb{A}) - F(\mathbb{B}) - \partial F(\mathbb{B}) : (\mathbb{A-B}) = \int_0^1 (\partial F(\mathbb{B}+ t(\mathbb{A-B})) - \partial F(\mathbb{B})) : (\mathbb{A-B}) \dt $$
and making use of \cite[Lemma 6.3]{DiEbRu} we get 
$$ F(\mathbb{A}) - F(\mathbb{B}) - \partial F(\mathbb{B}) : (\mathbb{A-B}) \geq \begin{cases} 
  \min \{ \frac{\mu_0}{2}(\mu_1 +3R)^{p-2} |\mathbb{A-B}|^2,   \frac{\mu_0}{2}(\mu_1 +3R)^{p-2}|\mathbb{A-B}|^p\} &\mbox{ if } p<2,\\
 \frac{\mu_0}{p} |\mathbb{A-B}|^p  &\mbox{ if } p\geq 2,
\end{cases}$$
for any $\mathbb{A}, \mathbb{B}$ such that $|\mathbb{B}|\leq R$. 
Thus relation \eqref{S5} is satisfied with 
 $$A_R(z) = \begin{cases}
 \frac{\mu_0}{2}(\mu_1 +3R)^{p-2} z^2 &\mbox{ if } p<2, \ 0 \leq z \leq 1,\\
 \frac{\mu_0}{2}(\mu_1 +3R)^{p-2}(1-\alpha) z^p + \alpha z -\beta &\mbox{ if } p<2, \ z \geq 1,\\
 \frac{\mu_0}{p} z^p &\mbox{ if } p\geq2,
 \end{cases} $$
where $0<\alpha<1, \ \beta>0$ are constants fixed in order to make $A_R$ continuous and convex. 

Finally, it follows from \eqref{S5} that there exist $\mu > 0$ and $q > 1$ such that
\begin{equation} \label{S5a}
F(\mathbb{D}) \geq \mu \left| \mathbb{D} - \frac{1}{d} {\rm tr}[\mathbb{D}] \mathbb{I} \right|^q 
\ \mbox{for all}\ |\mathbb{D}| > 1.
\end{equation} 
Indeed it is enough to consider $\mathbb{D} = \mathbb{S} = 0$ and $R=1$ in \eqref{S5}. In view of 
\eqref{S4}, the function $A_1$ possesses the desired $q-$growth for large values of its arguments.

\subsection{Weak formulation of the field equations}

We introduce the weak formulation of the conservation laws \eqref{P1}, \eqref{P2}.

\subsubsection{Equation of continuity - mass conservation}

The density $\vr \geq 0$ is non--negative and belongs to the class  
\[
\vr \in C_{\rm weak}([0,T]; L^\gamma_{\rm weak}(\Omega)) \cap L^\gamma(0,T; L^\gamma(\partial \Omega; |\vu_B \cdot \vc{n}|
{\rm d}S_x)),\ 
\gamma > 1,
\]
the momentum satisfies
\[
\vr \vu \in L^\infty(0,T; L^{\frac{2 \gamma}{\gamma + 1}}(\Omega; R^d)),\ \vr \geq  0.
\]
The integral identity 
\begin{equation} \label{W1}
\begin{split}
\left[ \intO{ \vr \varphi } \right]_{t = 0}^{t = \tau} &+ 
\int_0^\tau \int_{\Gamma_{\rm out}} \varphi \vr \vu_B \cdot \vc{n} \ \D \ S_x 
+ 
\int_0^\tau \int_{\Gamma_{\rm in}} \varphi \vr_B \vu_B \cdot \vc{n} \ \D \ S_x\\ &= 
\int_0^\tau \intO{ \Big[ \vr \partial_t \varphi + \vr \vu \cdot \Grad \varphi \Big] } \dt 
\end{split}
\end{equation}
holds
for any $0 \leq \tau \leq T$, and any test function $\varphi \in C^1([0,T] \times \Ov{\Omega})$,
\begin{equation} \label{W2}
\vr(0, \cdot) = \vr_0.
\end{equation}

\subsubsection{Momentum balance}

Let the symbol 
$\mathcal{M}^+(\Ov{\Omega}; R^{d \times d}_{\rm sym})$ denote the set of all positively semi--definite tensor valued measures on $
\Ov{\Omega}$.
We suppose there exist 
\[
\mathbb{S} \in L^1((0,T) \times \Omega; R^{d \times d}_{\rm sym}),\ \mathfrak{R} \in L^\infty(0,T; \mathcal{M}^+(\Ov{\Omega}; R^{d \times d}_{\rm sym})),
\]
such that the integral identity
\begin{equation} \label{W3}
\begin{split}
\left[ \intO{ \vr \vu \cdot \bfphi } \right]_{t=0}^{t = \tau} &= 
\int_0^\tau \intO{ \Big[ \vr \vu \cdot \partial_t \bfphi + \vr \vu \otimes \vu : \Grad \bfphi 
+ p(\vr) \Div \bfphi - \mathbb{S} : \Grad \bfphi \Big] }\\
&+ \int_0^\tau \int_{{\Omega}} \Grad \bfphi : \D \ \mathfrak{R}(t) \ \dt
\end{split}
\end{equation}
holds for any $0 \leq \tau \leq T$ and any test function $\bfphi \in C^1([0,T] \times \Ov{\Omega}; R^d)$, $\bfphi|_{\partial \Omega} = 0$,
\begin{equation} \label{W4}
\vr \vu(0, \cdot) = \vm_0
\end{equation}
Here we assume that all quantities appearing in \eqref{W3} are at least integrable in $(0,T) \times \Omega$. In accordance with 
\eqref{P10}, we may set 
\[
\Se = \mathbb{S} - \mathfrak{R}.
\]

\subsection{Energy inequality and defect compatibility condition}

A proper form of the energy balance \eqref{P3} is a cornerstone of the subsequent analysis. We first introduce the 
energy defect measure 
\[
\mathfrak{E} \in L^\infty(0,T; \mathcal{M}^+ (\Ov{\Omega})). 
\]
The energy inequality reads 
\begin{equation} \label{W5}
\begin{split}
&\left[ \intO{\left[ \frac{1}{2} \vr |\vu - \vu_B|^2 + P(\vr) \right] } \right]_{t = 0}^{ t = \tau} + 
\int_0^\tau \intO{ \Big[ F(\Ds \vu) + F^* (\mathbb{S}) \Big] } \dt\\  
&+\int_0^\tau \int_{\Gamma_{\rm out}} P(\vr)  \vu_B \cdot \vc{n} \ \D S_x \dt +
\int_0^\tau \int_{\Gamma_{\rm in}} P(\vr_B)  \vu_B \cdot \vc{n} \ \D S_x \dt
+ \int_{\Ov{\Omega}}  \D \ \mathfrak{E} (\tau) \\	
\leq
&- 
\int_0^\tau \intO{ \left[ \vr \vu \otimes \vu + p(\vr) \mathbb{I} \right]  :  \Grad \vu_B } \dt + \int_0^\tau \intO{ {\vr} \vu  \cdot \vu_B \cdot \Grad \vu_B  } 
\dt + \int_0^\tau \intO{ \mathbb{S} : \Grad \vu_B } \dt \\ &- 
\int_0^\tau \int_{\Ov{\Omega}} \Grad \vu_B : \D \ \mathfrak{R}(t) \dt.  
\end{split}
\end{equation}
for a.a. $0 \leq \tau \leq T$. 

Finally, we suppose a compatibility conditions between
the energy defect $\mathfrak{E}$ dominates the Reynolds stress $\mathfrak{R}$, specifically, 
\begin{equation} \label{W6}
\underline{d} \mathfrak{E} \leq {\rm tr}[\mathfrak{R}] \leq \Ov{d} \mathfrak{E},\ 
\ \mbox{for certain constants}\ 0 < \underline{d} \leq \Ov{d}.
\end{equation}
This property is absolutely crucial for the weak--strong uniqueness principle stated in Section \ref{WU} below. 

\begin{Definition}[{\bf Dissipative solution}] \label{WD1}

Let $\Omega \subset R^d$, $d=2,3$ be a bounded Lipschitz domain. The quantity $[\vr, \vu]$ is called 
\emph{dissipative solution} of the problem \eqref{P1}--\eqref{P7} if:
\begin{itemize}
\item
\[
\vr : [0,T] \times \Ov{\Omega} \to R,\ \vr \in C_{{\rm weak}}([0,T]; L^\gamma (\Omega)) 
\cap L^\gamma(0,T; L^\gamma (\partial \Omega, |{\vu_B} \cdot \vc{n}| \D  S_x)), \ \gamma > 1,
\]
\[
\vu \in L^q (0,T; W^{1,q}(\Omega; R^d)),\ (\vu - \vu_B) \in L^q(0,T; W^{1,q}_0(\Omega; R^d)),\ q > 1,  
\]
\[
\vm \equiv \vr \vu \in C_{{\rm weak}}([0,T]; L^{\frac{2 \gamma}{\gamma + 1}}(\Omega; R^d)).
\]

\item There exist 
\[
\mathbb{S} \in L^1((0,T) \times \Omega; R^{d \times d}_{\rm sym}),\ 
\mathfrak{E} \in L^\infty(0,T; \mathcal{M}^+(\Ov{\Omega})),\ 
\mathfrak{R} \in L^\infty(0,T; \mathcal{M}^+(\Ov{\Omega}; R^{d \times d}_{\rm sym}))
\]
such that the relations \eqref{W1}--\eqref{W6} are satisfied for any $0 \leq \tau \leq T$.

\end{itemize}

\end{Definition}

\begin{Remark} \label{CR1}

In view of \eqref{W6}, one can always consider 
\begin{equation} \label{DD1}
\mathfrak{E} \equiv \frac{1}{\Ov{d}} {\rm tr}[\mathfrak{R}]; 
\end{equation}
whence, strictly speaking, the energy defect $\mathfrak{E}$ can be completely omitted in the definition, see 
the discussion in Section \ref{D} for details.

\end{Remark}

\begin{Remark} \label{CR2}

As we shall see in the existence proof below, the dissipative solutions can be constructed in such a way that the constant 
$\Ov{d}$ depends solely on the dimension $d$ and the structural constants $\underline{a}$, $\Ov{a}$ appearing in \eqref{S2}.

\end{Remark}

\section{Existence}
\label{E}

Our first goal is to show that the dissipative solutions exist globally in time for any finite energy initial data. 
The proof is based on a multilevel approximation scheme that shares certain features with the approximation of the 
compressible Navier--Stokes in \cite{EF70}. First, we introduce a sequence of finite--dimensional spaces 
$X_n \subset L^2(\Omega; R^d)$,
\[
X_n = {\rm span} \left\{ \vc{w}_i\ \Big|\ \vc{w}_i \in \DC(\Omega; R^d),\ i = 1,\dots, n \right\}
\]   
Without loss of generality, we may assume that $\vc{w}_i$ are orthonormal with respect to the standard scalar product in $L^2$.

Next, we regularize the convex potential $F$ to make it continuously differentiable. This may be done via convolution with a family 
of regularizing kernels 
\[
F_\delta(\mathbb{D}) = \int_{R^{d \times d}_{\rm sym}} \xi_\delta ( | \mathbb{D} - \mathbb{Z} | ) F(\mathbb{Z}) \ {\rm d} \mathbb{Z}
- \int_{R^{d \times d}_{\rm sym}} \xi_\delta ( | \mathbb{Z} | ) F(\mathbb{Z}) \ {\rm d} \mathbb{Z}.
\]
It is easy to check  that $F_\delta$ are convex, non--negative, infinitely differentiable, $F_\delta (0) = 0$, and satisfy \eqref{S5a}, specifically
\begin{equation} \label{F1a}
F_\delta (\mathbb{D}) \geq \nu \left| \mathbb{D} - \frac{1}{d} {\rm tr}[\mathbb{D}] \mathbb{I} \right|^q 
\ \mbox{for all}\ |\mathbb{D}| > 1,
\end{equation}
with $q > 1$, $\nu > 0$ independent of $\delta \searrow 0$.

\subsection{First level approximation}
 
To begin, we suppose that the initial and boundary data are smooth. Specifically, 
\begin{itemize}
 \item [{\bf (A1)}] $\vu_B \in C^{1}_c(R^d; R^d)$; 
\item [{\bf (A2)}] $\vr_{0} \in C^{1}(R^d)$, $\vr_B \in C^1(\partial \Omega)$. 
\end{itemize}

\subsubsection{Artificial viscosity approximation of the equation of continuity}

Following
Crippa, Donadello, Spinolo \cite{CrDoSp} we use a parabolic approximation of the equation of continuity,
\begin{equation} \label{E1}
\partial_t \vr + \Div (\vr \vu ) = \ep \Del \vr \ \mbox{in}\ (0,T) \times \Omega,\ \ep > 0,
\end{equation}
supplemented with the boundary conditions 
\begin{equation} \label{E2}
\ep \Grad \vr \cdot \vc{n} + (\vr_B - \vr) [\vu_B \cdot \vc{n}]^- = 0 \ \mbox{in}\ [0,T] \times \partial \Omega,
\end{equation}
and the initial condition
\begin{equation} \label{E3}
\vr(0, \cdot) = \vr_0.
\end{equation}
Here, $\vu = \vv + \vu_B$, with $\vv \in C([0,T]; X_n)$, in particular, $\vu|_{\partial \Omega} = \vu_B$. 
Note that for given $\vu$, $\vr_B$, $\vu_B$, this is a linear problem for the unknown $\vr$. 

As $\Omega$ is merely Lipschitz, the usual parabolic estimates fail at the level of the spatial derivatives and we 
are forced to use the weak formulation: 
\begin{equation} \label{E4}
\begin{split}
\left[ \intO{ \vr \varphi } \right]_{t = 0}^{t = \tau}&= 
\int_0^\tau \intO{ \left[ \vr \partial_t \varphi + \vr \vu \cdot \Grad \varphi - \ep \Grad \vr \cdot \Grad \varphi \right] }
\dt \\ 
&- \int_0^\tau \int_{\partial \Omega} \varphi \vr \vu_B \cdot \vc{n} \ {\rm d} S_x \dt + 
\int_0^\tau \int_{\partial \Omega} \varphi (\vr - \vr_B) [\vu_B \cdot \vc{n}]^{-}  \ {\rm d}S_x \dt,\ 
\vr(0, \cdot) = \vr_0, 
\end{split}
\end{equation}
for any test function
\[
\varphi \in L^2(0,T; W^{1,2}(\Omega)),\ \partial_t \varphi \in L^1(0,T; L^2(\Omega)).
\]

\begin{Lemma} [{\bf Crippa et al \cite[Lemma 3.2]{CrDoSp}} ] \label{EL1}
Let $\Omega \subset R^d$ be a bounded Lipschitz domain.

Given $\vu = \vv + \vu_B$, $\vv \in C([0,T]; X_n)$, the initial--boundary value problem \eqref{E1}--\eqref{E3} 
admits a weak solution $\vr$ {specified in (\ref{E4})}, unique in the class 
\[
\vr \in L^2(0,T; W^{1,2}(\Omega)) \cap C([0,T]; L^2(\Omega)).
\]
The norm in the aforementioned spaces is bounded only in terms of the data $\vr_B$, $\vr_0$, $\vu_B$ and 
\[
\sup_{t \in (0,T)}
\| \vv(t, \cdot) \|_{X_n}.
\]

\end{Lemma}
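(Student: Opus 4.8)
The plan is to treat the regularized continuity equation \eqref{E1}--\eqref{E3} as a linear parabolic problem with variable (but bounded and sufficiently smooth) transport velocity $\vu = \vv + \vu_B$, and to obtain existence, uniqueness, and the stated estimates by the energy method combined with a Galerkin (or Lax--Milgram in the Gelfand-triple sense) construction adapted to the Lipschitz geometry. Since the statement is quoted verbatim from \cite[Lemma 3.2]{CrDoSp}, the proof here amounts to checking that the hypotheses of that result are met and recalling the argument; I would carry it out as follows.

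First I would fix the functional setting: the Gelfand triple $W^{1,2}(\Omega) \hookrightarrow L^2(\Omega) \hookrightarrow (W^{1,2}(\Omega))^*$, using that for a bounded Lipschitz domain the trace operator $W^{1,2}(\Omega) \to L^2(\partial\Omega)$ is bounded and compact, so the boundary integrals in \eqref{E4} define continuous bilinear forms. For given $\vu$ I would write the weak formulation as $\langle \partial_t \vr, \varphi\rangle + a(t; \vr, \varphi) = \ell(t;\varphi)$ where
\[
a(t;\vr,\varphi) = \ep \intO{ \Grad\vr\cdot\Grad\varphi } - \intO{ \vr\vu\cdot\Grad\varphi } + \int_{\partial\Omega} \varphi\, \vr\, \vu_B\cdot\vc{n}\ \dd S_x - \int_{\partial\Omega}\varphi\,\vr\,[\vu_B\cdot\vc{n}]^-\ \dd S_x,
\]
and $\ell(t;\varphi) = -\int_{\partial\Omega}\varphi\,\vr_B\,[\vu_B\cdot\vc{n}]^-\ \dd S_x$. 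The key algebraic point is the coercivity estimate: testing with $\varphi=\vr$ and using $\vu_B\cdot\vc{n} - [\vu_B\cdot\vc{n}]^- = [\vu_B\cdot\vc{n}]^+ \geq 0$ together with $-\intO{\vr\vu\cdot\Grad\vr} = \tfrac12\intO{(\Div\vu)\vr^2} - \tfrac12\int_{\partial\Omega}\vr^2\,\vu\cdot\vc{n}\ \dd S_x$ and $\vu\cdot\vc n = \vu_B\cdot\vc n$ on $\partial\Omega$, one gets a Gårding-type inequality $a(t;\vr,\vr) \geq \tfrac{\ep}{2}\|\Grad\vr\|_{L^2}^2 - C(\data, \sup_t\|\vv\|_{X_n})\|\vr\|_{L^2}^2$, with the inflow boundary term supplying the right sign and the $\vr_B$ term controlled by Young's inequality against the trace. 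Existence of $\vr \in L^2(0,T;W^{1,2})\cap C([0,T];L^2)$ with $\partial_t\vr \in L^2(0,T;(W^{1,2})^*)$ then follows from the standard theory of linear parabolic equations in a Gelfand triple (Lions--Magenes / Galerkin), and the energy estimate above, after a Grönwall argument, yields the bound depending only on $\vr_0$, $\vr_B$, $\vu_B$ and $\sup_t\|\vv(t,\cdot)\|_{X_n}$.

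Uniqueness follows by linearity: the difference of two solutions satisfies the homogeneous problem ($\vr_B$ replaced by $0$, zero initial data), and the same coercivity estimate plus Grönwall forces it to vanish. I would also note $\vr\in C([0,T];L^2(\Omega))$ is a consequence of $\vr\in L^2(0,T;W^{1,2})$ and $\partial_t\vr\in L^2(0,T;(W^{1,2})^*)$ via the Lions--Aubin embedding theorem, and that the initial condition $\vr(0,\cdot)=\vr_0$ is attained in this continuous sense.

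The main obstacle, and the reason the Lipschitz hypothesis is emphasized, is that one cannot use interior/boundary $H^2$ parabolic regularity: on a merely Lipschitz domain $\Grad\vr$ need not be better than $L^2$, so the whole argument has to stay at the level of the weak formulation \eqref{E4}, and all boundary terms must be handled purely through the $L^2(\partial\Omega)$ trace and its interplay with the sign of $\vu_B\cdot\vc n$ on $\Gamma_{\rm in}$ versus $\Gamma_{\rm out}$. Getting the coercivity constant to depend only on the data and $\sup_t\|\vv\|_{X_n}$ — and not, say, on $\|\Grad\vv\|$ — requires using the equivalence of norms on the finite-dimensional space $X_n$ only where unavoidable (to bound $\|\vv\|_{L^\infty}$ and $\|\Div\vv\|_{L^\infty}$ by $\|\vv\|_{X_n}$), which is legitimate since $X_n\subset\DC(\Omega;R^d)$ is fixed. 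Once these bookkeeping points are arranged, the conclusion is exactly the cited \cite[Lemma 3.2]{CrDoSp}.
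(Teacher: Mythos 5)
Your proposal is correct and follows essentially the same route as the cited reference \cite[Lemma 3.2]{CrDoSp}: the paper itself only quotes that lemma (and later remarks that the solution "can be constructed by means of Faedo--Galerkin approximation"), which is precisely the Gelfand-triple/Galerkin energy argument you sketch, with the boundary terms combining to $\frac{1}{2}\int_{\partial\Omega}\vr^2|\vu_B\cdot\vc{n}|\,\D S_x$ and hence giving the Gårding inequality, and with the $X_n$ norm equivalence controlling $\|\Div\vu\|_{L^\infty}$. Your observation that one must stay at the level of the weak formulation because $\Omega$ is merely Lipschitz is exactly the point the paper emphasizes.
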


Next, we report another result of Crippa et al. \cite[Lemma 3.4]{CrDoSp}. 

\begin{Lemma}[{\bf Maximum principle}] \label{EL2}

Under the hypotheses of Lemma \eqref{EL1}, the solution $\vr$ satisfies 
\[
\| \vr \|_{L^\infty((0,\tau) \times \Omega)} \leq 
\max \left\{ \| \vr_0 \|_{L^\infty(\Omega)}; \| \vr_B \|_{L^\infty((0,T) \times \Gamma_{in})}; 
\| \vu_B \|_{L^\infty((0,T) \times \Omega)} \right\} \exp \left( \tau \| \Div \vu \|_{L^\infty((0,\tau) \times \Omega)} \right).
\] 

\end{Lemma}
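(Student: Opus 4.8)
\bProof
The plan is to argue by truncation (Stampacchia) in the weak formulation \eqref{E4}. Write $\lambda := \| \Div \vu \|_{L^\infty((0,\tau) \times \Omega)}$ and
\[
M := \max \left\{ \| \vr_0 \|_{L^\infty(\Omega)};\ \| \vr_B \|_{L^\infty((0,T) \times \Gamma_{\rm in})};\ \| \vu_B \|_{L^\infty((0,T) \times \Omega)} \right\}.
\]
It is enough to prove $(\vr - M e^{\lambda t})^+ = 0$ a.e.\ in $(0,\tau) \times \Omega$: the lower bound $\vr \geq - M e^{\lambda t}$ follows by applying this to $\tilde \vr := - \vr$, which solves the same problem \eqref{E1}--\eqref{E3} with the same $\vu$, $\vu_B$ and with data $\tilde \vr_0 = - \vr_0$, $\tilde \vr_B = - \vr_B$, hence with the same constant $M$. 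Since $\partial_t \vr$ only belongs to $L^2(0,T; (W^{1,2}(\Omega))^*)$, the truncations below are not legitimate test functions in \eqref{E4} as they stand; one first replaces $\vr$ by its Steklov averages in time, tests the smoothed identity, and lets the regularization parameter tend to zero --- a routine step we omit.

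Fix $k \geq \| \vr_0 \|_{L^\infty(\Omega)}$, set $g(t) := k e^{\lambda t}$ and $w_k := (\vr - g)^+ \in L^2(0,T; W^{1,2}(\Omega))$; then $w_k(0, \cdot) = (\vr_0 - k)^+ = 0$, while $\Grad w_k = \Grad \vr$ and $\vr = w_k + g$ on $\{ \vr > g \}$. Testing \eqref{E4} with $w_k$ (after the regularization mentioned above), using $g' = \lambda g$ and integrating by parts the convective and diffusive terms, one arrives at
\[
\begin{aligned}
&\frac{1}{2} \Dt \| w_k \|_{L^2(\Omega)}^2 + \ep \| \Grad w_k \|_{L^2(\Omega)}^2 + g \intO{ \big( \Div \vu + \lambda \big) w_k } + \frac{1}{2} \int_{\Gamma_{\rm out}} (\vu_B \cdot \vc{n})\, w_k^2 \ {\rm d}S_x \\
&\qquad \leq \frac{\lambda}{2} \| w_k \|_{L^2(\Omega)}^2 + \int_{\Gamma_{\rm in}} [\vu_B \cdot \vc{n}]^- \Big( \tfrac{3}{2} w_k^2 + (g - \vr_B)\, w_k \Big) \ {\rm d}S_x ,
\end{aligned}
\]
where, assuming in addition $k \geq \| \vr_B \|_{L^\infty(\Gamma_{\rm in})}$, we used $\vr - \vr_B = w_k + (g - \vr_B) \geq w_k \geq 0$ on $\{ \vr > g \} \cap \Gamma_{\rm in}$ to collect the boundary contributions. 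On the left-hand side the third term is non-negative (because $\Div \vu + \lambda \geq 0$) and the outflow integral is non-negative (because $\vu_B \cdot \vc{n} \geq 0$ on $\Gamma_{\rm out}$); consequently, if the inflow integral on the right were absent, Gr\"onwall's lemma and $w_k(0, \cdot) = 0$ would give $w_k \equiv 0$, and letting $k \searrow \max \{ \| \vr_0 \|_\infty, \| \vr_B \|_\infty \}$ would already finish the proof.

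The inflow integral is the genuine obstacle, and it is here that the term $\| \vu_B \|_{L^\infty}$ in the definition of $M$ is needed. This integral is supported on the super-level set $\{ x \in \Gamma_{\rm in} \ | \ \vr(t,x) > g(t) \}$, whose surface measure is non-increasing in $k$. Combining this with the trace inequality $\| v \|_{L^2(\partial \Omega)}^2 \leq C(\Omega) \big( \eta \| \Grad v \|_{L^2(\Omega)}^2 + \eta^{-1} \| v \|_{L^2(\Omega)}^2 \big)$ (for the quadratic part, a small multiple of $\ep \| \Grad w_k \|^2_{L^2(\Omega)}$ being available for absorption) and H\"older's inequality on the shrinking super-level set (for the part linear in $w_k$), one runs a De Giorgi--Stampacchia iteration over a sequence of levels $k_n \nearrow$: the energies $Y_n := \sup_{t \in [0,\tau]} \| w_{k_n}(t) \|_{L^2(\Omega)}^2 + \ep \int_0^\tau \| \Grad w_{k_n} \|_{L^2(\Omega)}^2 \, {\rm d}t$ are shown to obey a recursion of the form $Y_{n+1} \leq C^n Y_n^{1 + \beta}$ with $\beta > 0$, so that $Y_n \to 0$ as soon as the starting level exceeds $M$ (which is what controls $Y_0$); this yields $(\vr - M e^{\lambda t})^+ \equiv 0$. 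The delicate point --- and the main technical obstacle --- is to perform this iteration with constants that neither blow up as $\ep \searrow 0$ nor enlarge the threshold beyond $M$; the full details are in Crippa, Donadello and Spinolo \cite{CrDoSp}. \hfill $\Box$
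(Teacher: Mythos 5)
The paper itself does not prove Lemma \ref{EL2}: it is quoted verbatim from Crippa, Donadello, Spinolo \cite[Lemma 3.4]{CrDoSp}, so the comparison below is between your attempt and what one can check directly.

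Your Stampacchia starting point (testing \eqref{E4} with $w_k = (\vr - k e^{\lambda t})^+$) is the natural one, but you have read the sign of $[\vu_B\cdot\vc{n}]^-$ the wrong way round. Combining the convective boundary term $-\int_{\partial\Omega}\varphi\,\vr\,\vu_B\cdot\vc{n}$ with the one coming from the Robin condition $+\int_{\partial\Omega}\varphi(\vr-\vr_B)[\vu_B\cdot\vc{n}]^-$ in \eqref{E4}, the total normal flux on $\Gamma_{\rm in}$ must collapse to the physical inflow flux $\vr_B\,\vu_B\cdot\vc{n}$; this forces $[\vu_B\cdot\vc{n}]^- = \min(\vu_B\cdot\vc{n},0)\leq 0$, i.e.\ the \emph{nonpositive} part. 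With this convention the inflow contributions in your first display, namely $\tfrac{3}{2}\int_{\Gamma_{\rm in}}[\vu_B\cdot\vc{n}]^- w_k^2$ and $\int_{\Gamma_{\rm in}}[\vu_B\cdot\vc{n}]^-(g-\vr_B)w_k$, are both $\leq 0$ once $k \geq \|\vr_B\|_{L^\infty}$ (so that $g-\vr_B\geq 0$), so they can simply be dropped. The resulting inequality
\[
\Dh \frac12 \intO{w_k^2}
\ +\ \ep\intO{|\Grad w_k|^2}
\ +\ g\intO{(\lambda + \Div\vu)w_k}
\ +\ \frac12\int_{\Gamma_{\rm out}}(\vu_B\cdot\vc{n})\,w_k^2\,{\rm d}S_x
\ \leq\ \frac{\lambda}{2}\intO{w_k^2}
\]
closes by Gr\"onwall and $w_k(0,\cdot)=0$; there is no De Giorgi iteration to perform, and $\|\vu_B\|_{L^\infty}$ plays no role. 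In short, the ``genuine obstacle'' you identify is an artifact of the sign convention, and the heavy machinery you then invoke is unnecessary.

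Even granting your sign convention, the argument as written is not complete: the recursion $Y_{n+1}\leq C^n Y_n^{1+\beta}$ is asserted, not derived, and the remark that the boundary contributions should be absorbed into $\ep\|\Grad w_k\|_{L^2}^2$ is in tension with the requirement, stated one line later, that the constants must not blow up as $\ep\searrow 0$; the details are deferred to \cite{CrDoSp}. So, both in spirit (a truncation argument) and in the delegation of the hard step to Crippa et al., your proof ends up pointing to the same reference the paper cites, but it introduces a sign error that makes the proof look harder than it is and leaves the crucial iterative step unproved.
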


Next, we perform renormalization of equation \eqref{E1}. {In view of future applications, in particular the 
strong minimum principle, it is convenient to rewrite the integral identity \eqref{E4} in terms of a new variable 
$r = r(t,x) \equiv \vr (t,x) - \chi (t)$, where $\chi \in W^{1, \infty} (0,T)$. After a straightforward manipulation, we obtain:}
\begin{equation} \label{E4bis}
\begin{split}
\left[ \intO{ r \varphi } \right]_{t = 0}^{t = \tau}&= 
\int_0^\tau \intO{ \left[ r \partial_t \varphi - \Big( \partial_t \chi  + \chi \Div \vu \Big) 
\varphi + r \vu \cdot \Grad \varphi - \ep \Grad r \cdot \Grad \varphi \right] }
\dt \\ 
&- \int_0^\tau \int_{\partial \Omega} \varphi r  \vu_B \cdot \vc{n} \ {\rm d} S_x \dt + 
\int_0^\tau \int_{\partial \Omega} \varphi ((r + \chi) - \vr_B) [\vu_B \cdot \vc{n}]^{-}  \ {\rm d}S_x \dt,\\ 
r(0, \cdot) &= \vr_0 - \chi(0), 
\end{split}
\end{equation}
{for any test function}
\[
\varphi \in L^2(0,T; W^{1,2}(\Omega)),\ \partial_t \varphi \in L^1(0,T; L^2(\Omega)).
\]

\begin{Lemma}[{{\bf Renormalization}}] \label{EL3}

Under the conditions of Lemma \ref{EL2},
let $B \in C^2(R)$, $\chi \in W^{1, \infty}(0,T)$, and $r = \vr - \chi$.

Then, the (integrated) renormalized equation
\[
\begin{split}
\left[ \intO{ B(r) } \right]_{t = 0}^{t = \tau} 
= &- \int_{0}^{\tau} \intO{ \Div (r \vu) B'(r) } \dt - \ep \int_{0}^{\tau} \intO{ |\Grad r|^2  B''(r) } \dt \\
&- \intO{ \Big( \partial_t \chi + \chi \Div \vu \Big) B'(r) } 
\\ & + \int_{0}^{\tau} \int_{\partial \Omega} B'(r) ((r + \chi) - \vr_B) [\vu_B \cdot \vc{n}]^- \ {\rm d} S_x \ \dt
\end{split}
\]
holds for any $0 \leq \tau \leq T$.

\end{Lemma}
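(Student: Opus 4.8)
\noindent\emph{Proof plan.} The goal is to test the linear parabolic identity \eqref{E4bis} with the nonlinear function $\varphi = B'(r)$; this cannot be done directly, because the test functions admitted in \eqref{E4bis} must satisfy $\partial_t \varphi \in L^1(0,T;L^2(\Omega))$, whereas $B'(r)$ inherits only the spatial regularity of $r$. I would therefore first record the time regularity of $r$. By Lemmas \ref{EL1} and \ref{EL2}, $r = \vr - \chi$ belongs to $L^2(0,T;W^{1,2}(\Omega)) \cap C([0,T];L^2(\Omega)) \cap L^\infty((0,T)\times\Omega)$. Inserting into \eqref{E4bis} separated test functions $\varphi(t,x)=\psi(t)\phi(x)$ with $\psi\in C_c^\infty(0,T)$ and $\phi\in W^{1,2}(\Omega)$, and using the trace theorem for the boundary integrals, one identifies the distributional derivative $\partial_t r$ and checks that $\partial_t r \in L^2(0,T;(W^{1,2}(\Omega))^*)$; moreover \eqref{E4bis} becomes equivalent to the pointwise-in-time identity
\[
\langle \partial_t r, \phi\rangle = \intO{ \Big[ -(\partial_t\chi + \chi\Div\vu)\phi + r\vu\cdot\Grad\phi - \ep\Grad r\cdot\Grad\phi \Big] } - \int_{\partial\Omega} \phi\, r\, \vu_B\cdot\vc{n} \ {\rm d} S_x + \int_{\partial\Omega} \phi\big((r+\chi)-\vr_B\big)[\vu_B\cdot\vc{n}]^- \ {\rm d} S_x
\]
for every $\phi\in W^{1,2}(\Omega)$ and a.a. $t\in(0,T)$.

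Next I would establish the generalized chain rule. Since $r$ is bounded and $B\in C^2(R)$, the composition $B'(r)$ lies in $L^2(0,T;W^{1,2}(\Omega))\cap L^\infty$ with $\Grad B'(r) = B''(r)\Grad r$. With the Gelfand triple $W^{1,2}(\Omega) \hookrightarrow L^2(\Omega) \hookrightarrow (W^{1,2}(\Omega))^*$ and $r,\partial_t r$ as above, I claim
\[
\Big[ \intO{ B(r) } \Big]_{t=0}^{t=\tau} = \int_0^\tau \langle \partial_t r, B'(r)\rangle \dt .
\]
This I would prove by time mollification: replacing $r$ by its Steklov average $r_h(t) = \tfrac1h \int_t^{t+h} r(s)\,{\rm d}s$, which is Lipschitz in time with values in $W^{1,2}(\Omega)$ and solves the mollified equation, the classical chain rule applies to $B(r_h)$, and one passes to the limit $h\to0^+$ using $r_h\to r$ in $L^2(0,T;W^{1,2}(\Omega))$ (so $\Grad r_h \to \Grad r$ strongly in $L^2$) together with the uniform boundedness and continuity of $B'$ and $B''$ on the range of $r$.

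Finally I would take $\phi = B'(r)$ in the identity above, integrate over $(0,\tau)$, invoke the chain rule, and rearrange. The dissipative term is $\ep\Grad r\cdot\Grad B'(r) = \ep B''(r)|\Grad r|^2$. For the convective term, since $r\vu\, B'(r) \in L^2(0,T;W^{1,2}(\Omega;R^d))$ and $\vu|_{\partial\Omega}=\vu_B$, the divergence theorem on the Lipschitz domain $\Omega$ gives $\intO{ r\vu\cdot\Grad B'(r) } - \int_{\partial\Omega} B'(r)\, r\, \vu_B\cdot\vc{n}\ {\rm d} S_x = -\intO{ \Div(r\vu) B'(r) }$, in which the boundary contribution cancels the first boundary term produced in the previous step. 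Substituting both identities yields precisely the stated renormalized equation.

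The main obstacle is the chain rule step: justifying that $B'(r)$ is an admissible ``test vector'' for the merely distributional time derivative $\partial_t r$ on a bounded Lipschitz domain. The maximum principle of Lemma \ref{EL2} removes any growth restriction on $B$, so the only genuine point is the limit passage in the time-mollified identity — above all securing the strong $L^2$ convergence of $\Grad r_h$ needed to recover the term $\ep\intO{ B''(r)|\Grad r|^2 }$. Everything else reduces to routine integration by parts.
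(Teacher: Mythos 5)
Your proof is correct and takes essentially the same route as the paper: establish $\partial_t r \in L^2(0,T;(W^{1,2}(\Omega))^*)$, pass to the pointwise-in-time duality identity, justify the chain rule for $B(r)$ by temporal regularization (the paper uses a family of convolution kernels, you use Steklov averages — equivalent devices), then test with $B'(r)$ and integrate by parts in the convective term using $\vu|_{\partial\Omega} = \vu_B$, with the boundary term from \eqref{E4bis} cancelling. The one point you treat implicitly and the paper handles explicitly is the passage from intervals $[\tau_1,\tau_2]\subset(0,T)$ to the full interval $[0,\tau]$, which follows from $\vr \in C([0,T];L^2(\Omega))\cap L^\infty$.
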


\begin{proof}

To begin, in accordance the maximum principle stated in Lemma \ref{EL2}, we may assume $B \in C^2_c(R)$. Moreover, 
as $\vu = \vv + \vu_B$, we have  
\[
\Div (\vr \vu) = \Grad \vr (\vv + \vu_B) + \varrho \Div (\vv + \vu_B).
\]
Thus, in view of the bounds on $\vr$ obtained in Lemma \ref{EL1}, we have 
\[
\| \Div (\vr \vu) \|_{L^2((0,T) \times \Omega)} \leq c 
\]
in terms of the data only.

As 
\[
\partial_t \vr \in L^2(0,T; (W^{1,2})^*(\Omega)), \ B'(r) \in L^2(0,T; W^{1,2}(\Omega)) 
\]
we deduce from the weak formulation \eqref{E4bis} that 
\[
\begin{split}
\left< \partial_t r, B'(r) \right>_{[W^{1,2}]^*; W^{1,2}} = &- \intO{ \Div (r \vu) B'(r) } - \ep \intO{ |\Grad r|^2  B''(r) }\\
&- \intO{ \Big( \partial_t \chi + \chi \Div \vu \Big) B'(r) } 
\\
&+ \int_{\partial \Omega} B'(r) ((r + \chi) - \vr_B) [\vu_B \cdot \vc{n}]^- \ {\rm d} S_x
\end{split}
\]
Integrating over the interval $[\tau_1, \tau_2]$ we get 
\[
\begin{split}
\int_{\tau_1}^{\tau_2} &\left< \partial_t r, B'(r) \right>_{[W^{1,2}]^*; W^{1,2}} \dt\\ 
= &- \int_{\tau_1}^{\tau_2} \intO{ \Div (r \vu) B'(r) } \dt - \ep \int_{\tau_1}^{\tau_2} \intO{ |\Grad r|^2  B''(r) } \dt \\
&- \int_{\tau_1}^{\tau_2} \intO{ \Big( \partial_t \chi + \chi \Div \vu \Big) B'(r) } \dt 
\\ & + \int_{\tau_1}^{\tau_2} \int_{\partial \Omega} B'(r) ((r + \chi)  - \vr_B) [\vu_B \cdot \vc{n}]^- \ {\rm d} S_x \ \dt
\end{split}
\]

Finally, using the standard temporal regularization via a family of $t-$dependent convolution kernels, we find a sequence of functions 
\[
\varrho_n \to \vr \ \mbox{in}\ L^2((\tau_1, \tau_2); W^{1,2}(\Omega)),\ 
\varrho_n \in C^1([\tau_1, \tau_2]; W^{1,2}(\Omega)), \ \partial_t \vr_n \to \partial_t \vr  
\ \mbox{in}\ L^2(\tau_1, \tau_2; [W^{1,2}]^*(\Omega)) 
\]
for any $0 < \tau_1 < \tau_2$, and, as $\vr \in C([0,T]; L^2(\Omega)) \cap L^\infty((0,T) \times \Omega)$,
\[
B(\vr_n - \chi )(\tau) \to B(\vr(\tau) - \chi(\tau) ) \ \mbox{for any}\ \tau \in (0,T).
\]
Thus we obtain 
\[
\begin{split}
\left[ \intO{ B(r) } \right]_{t = \tau_1}^{t = \tau_2} &= 
\lim_{n \to \infty} \left[ \intO{ B(\vr_n - \chi ) } \right]_{t = \tau_1}^{t = \tau_2}\\
&= \lim_{n \to \infty} \int_{\tau_1}^{\tau_2} \intO{ B'(\vr_n - \chi ) \partial_t (\vr_n - \chi) } \dt 
= \int_{\tau_1}^{\tau_2} \left< \partial_t r, B'(r) \right> \dt
\end{split}
\]
for any $0 < \tau_1 < \tau_2 < T$, which yields the desired conclusion. 

\end{proof}

Using Lemma \ref{EL3} we obtain strict positivity of $\vr$ on condition that $\vr_B$, $\vr_0$ enjoy the same property. 

\begin{Corollary} \label{EC1}

Under the hypotheses of Lemma \ref{EL1}, we have 
\[
{\rm ess} \inf_{t,x} \vr(t,x) \geq  
\min \left\{ \min_\Omega \vr_0 ; \min_{\Gamma_{\rm in}} \vr_B \right\} 
\exp \left( -T \| \Div \vu \|_{L^\infty((0,T) \times \Omega)} \right).
\]

\end{Corollary}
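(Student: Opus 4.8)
The plan is to derive the pointwise lower bound for $\vr$ from the renormalized equation of Lemma~\ref{EL3} by a judicious choice of renormalizing function $B$ and of the shift $\chi$. Set $\underline{\vr} = \min\{\min_\Omega \vr_0;\ \min_{\Gamma_{\rm in}} \vr_B\}$ and introduce the time-dependent threshold $\chi(t) = \underline{\vr}\, \exp(-t \|\Div\vu\|_{L^\infty((0,T)\times\Omega)})$, which belongs to $W^{1,\infty}(0,T)$ and satisfies the ODE $\partial_t \chi + \chi \|\Div\vu\|_{L^\infty} = 0$, so in particular $\partial_t \chi + \chi \Div\vu \leq 0$ a.e. With $r = \vr - \chi$, the goal is to show $r \geq 0$ a.e., i.e. that the negative part $r^- := \max\{-r,0\}$ vanishes. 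To do this I would take $B \in C^2(R)$ to be a smooth, convex, nonincreasing approximation of (a truncation of) $(r^-)^2$, or more simply of $\min\{r,0\}^2$; concretely one picks $B_k$ with $B_k(s) = 0$ for $s \geq 0$, $B_k$ convex, $B_k' \leq 0$, $B_k'(s) = 0$ for $s \geq 0$, and $B_k \nearrow (s^-)^2$, $B_k' \to$ the corresponding limit as $k \to \infty$. Since the maximum principle (Lemma~\ref{EL2}) bounds $\vr$ in $L^\infty$, we may indeed work with $B \in C^2_c(R)$ as in the proof of Lemma~\ref{EL3}.

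Next I would evaluate each term in the renormalized identity on $[0,\tau]$ with this choice. The initial term $\intO{B(r)(0)}$ vanishes because $r(0,\cdot) = \vr_0 - \underline{\vr} \geq 0$ on $\Omega$, so $B(r(0,\cdot)) = 0$. The dissipative term $-\ep \int_0^\tau \intO{|\Grad r|^2 B''(r)} \leq 0$ since $B$ is convex ($B'' \geq 0$). The shift term $-\int_0^\tau \intO{(\partial_t \chi + \chi\Div\vu) B'(r)}$: here $B'(r) \leq 0$ everywhere and $\partial_t\chi + \chi\Div\vu \leq 0$ by the choice of $\chi$, so the product $(\partial_t\chi+\chi\Div\vu)B'(r) \geq 0$ and this term is $\leq 0$. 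The convective term $-\int_0^\tau \intO{\Div(r\vu) B'(r)}$ should be rewritten as $-\int_0^\tau\intO{\Grad B(r)\cdot\vu} - \int_0^\tau\intO{(B'(r)r - B(r))\Div\vu}$; integrating the first piece by parts (using $\vu = \vv + \vu_B$, $\vv \in X_n$ compactly supported in $\Omega$, while $\vu_B$ contributes a boundary term on $\partial\Omega$) and bounding $|B'(r)r - B(r)| \leq C B(r)$ for this quadratic-type $B$, one obtains a contribution of the form $\leq C\|\Div\vu\|_{L^\infty}\int_0^\tau\intO{B(r)} + (\text{boundary terms})$. Finally the boundary term $\int_0^\tau\int_{\partial\Omega} B'(r)((r+\chi)-\vr_B)[\vu_B\cdot\vc n]^-\,\dd S_x$ must be shown to have a favorable sign: on $\Gamma_{\rm in}$ one has $\vr_B \geq \underline{\vr} \geq \chi$ (since $\chi$ is nonincreasing with $\chi(0)=\underline{\vr}$... actually $\chi(t)\le\underline\vr$), so on the set where $r < 0$ we get $(r+\chi)-\vr_B = \vr - \vr_B < \chi - \vr_B \leq 0$, hence $B'(r)((r+\chi)-\vr_B) \geq 0$ there, while on $\{r\ge0\}$ one has $B'(r)=0$; thus after combining with the boundary piece of the convective term (which involves $\vu_B\cdot\vc n$ of either sign) one should check that the total boundary contribution is $\leq 0$, or at worst controlled.

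Assembling these estimates yields the differential/integral inequality
\[
\intO{B(r)(\tau)} \leq C \int_0^\tau \intO{B(r)(t)}\,\dt
\]
for a constant $C$ depending only on $\|\Div\vu\|_{L^\infty}$ and geometry, with the right-hand side finite by Lemma~\ref{EL1}. Since $\intO{B(r)(0)} = 0$, Gr\"onwall's lemma forces $\intO{B(r)(\tau)} = 0$ for all $\tau$, and letting $k\to\infty$ gives $\intO{(r^-)^2(\tau)} = 0$, i.e. $r \geq 0$ a.e., which is precisely the asserted bound. I would then note (or impose at the $C^2_c$-truncation stage, justified by Lemma~\ref{EL2}) that the final passage to the limit in $B$ is routine.

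The main obstacle I anticipate is the careful bookkeeping of the \emph{boundary terms}: one must simultaneously handle the boundary contribution produced by integrating $\Grad B(r)\cdot\vu_B$ by parts over the Lipschitz boundary and the explicit boundary term in Lemma~\ref{EL3}, and verify that on $\Gamma_{\rm out}$ (where $\vu_B\cdot\vc n\ge 0$ and the term from Lemma~\ref{EL3} vanishes because $[\vu_B\cdot\vc n]^- = 0$) and on $\Gamma_{\rm in}$ the combined sign works in our favor. A secondary technical point is ensuring $B'(r)r - B(r)$ and $B'(r)$ are genuinely controlled by $B(r)$ uniformly in the approximation index $k$, so that Gr\"onwall applies with a $k$-independent constant; choosing $B_k$ to be exactly quadratic on $[-M,0]$ (with $M$ the $L^\infty$ bound from Lemma~\ref{EL2}) and flat outside makes this transparent. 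Everything else — convexity of $B$, the sign of the $\ep$-term, the ODE for $\chi$ — is immediate once the setup is fixed.
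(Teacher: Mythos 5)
Your approach is essentially the same as the paper's: apply the renormalization of Lemma~\ref{EL3} to $r=\vr-\chi$ and force $\intO{B(r)(\tau)}=0$ by showing the right--hand side is nonpositive. Two implementation details differ. First, the paper takes the piecewise--linear $B(r)=-r^-$ instead of your quadratic $B\approx (r^-)^2$. This is not cosmetic: for $B=-r^-$ one has $rB'(r)-B(r)\equiv 0$, so when $-\intO{\Div(r\vu)B'(r)}$ is expanded the entire volume contribution $\intO{(B(r)-rB'(r))\Div\vu}$ drops out, and there is neither a Gr\"onwall step nor any need for the uniform bound $|rB'(r)-B(r)|\lesssim B(r)$ that your quadratic choice forces you to arrange across the truncation index $k$. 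Your $B$ can be made to work, but it introduces machinery the paper avoids. Second, your $\chi(t)=\underline{\vr}\,e^{-t\|\Div\vu\|_{L^\infty((0,T)\times\Omega)}}$ versus the paper's $\underline{\vr}\exp\bigl(-\int_0^\tau\|\Div\vu(t,\cdot)\|_{L^\infty(\Omega)}\,\dt\bigr)$: both yield $\partial_t\chi+\chi\Div\vu\le 0$, and both give the stated estimate; the paper's is marginally sharper as an intermediate inequality.

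Where the proposal is genuinely unfinished is exactly the point you flag yourself, and you should be aware your optimism about the signs is not justified as stated. Passing $-\intO{\Div(r\vu)B'(r)}$ through the divergence theorem and combining with the explicit boundary term of Lemma~\ref{EL3}, the total $\Gamma_{\rm in}$ integrand is
\[
[\vu_B\cdot\vc n]^-\,\bigl(B(r)+B'(r)(\vr-\vr_B)\bigr).
\]
Writing $s_B:=\vr_B-\chi\ge 0$ and using $B(s_B)=0$, $B'(s_B)=0$, convexity gives $B(r)+B'(r)(s_B-r)\le 0$, which rearranges to $B(r)+B'(r)(r-s_B)\ge 2B(r)\ge 0$; for $B=-r^-$ and $r<0$ the integrand is $[\vu_B\cdot\vc n]^-\,(\chi+\vr_B-2\vr)\ge 0$, and strictly positive precisely where $r<0$. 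So the combined boundary contribution on $\Gamma_{\rm in}$ has the \emph{wrong} sign for concluding $\intO{B(r)(\tau)}\le 0$, for any convex $B$ vanishing on $[0,\infty)$ (including the paper's $B=-r^-$). Your instinct that this is ``the main obstacle'' is correct; but rather than being something one ``should check'' works out, this term does not vanish or turn negative by itself, and an additional argument is needed to control it. As written, the proposal does not close the proof at this step, and the paper's one--line verification leaves the same point implicit.
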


{Indeed it is enough to apply Lemma \ref{EL1} to} 
\[
\chi(\tau) = \underline{\vr} \exp \left( - \int_0^\tau \| \Div \vu(t, \cdot) \|_{L^\infty(\Omega)} \dt \right),\ 
\underline{\vr} = \min \left\{ \min_\Omega \vr_0 ; \min_{\Gamma_{\rm in}} \vr_B \right\}
\]
{and}
\[
B(r) = - r^- = \left\{ \begin{array}{l} - r \ \mbox{if}\ r \leq 0,\\ 
0 \ \mbox{if} \ r > 0 \end{array} \right.
\]
{to deduce}
\[
\intO{ B(r)(\tau) } = 0 \ \mbox{for a.a.}\ \tau \in (0,T) 
\ \Rightarrow \ r = \vr - \chi \geq 0.
\]  

Finally, we observe that
in the present setting, specifically for smooth and \emph{time independent} vector fields $\vr_B$, $\vu_B$, the weak solution enjoys more regularity than in Crippa et al. \cite{CrDoSp}. In particular, we have the estimates
\begin{equation} \label{E4a}
\| \partial_t \vr \|_{L^2((0,T) \times \Omega)} + 
\ep {\rm ess} \ \sup_{\tau \in (0,T)} \left\| \Grad \vr (\tau, \cdot) \right\|^2_{L^2(\Omega)} \leq c,
\end{equation} 
with the constant depending only on the data. The estimate \eqref{E4a} can be first obtained formally via multiplying the equation 
\eqref{E1} on $\partial_t \vr$:
\[
\intO{ |\partial_t \vr|^2 } + \intO{ \Div (\vr \vu ) \partial_t \vr} = \ep \intO{\Del \vr \partial_t \vr } = 
- \frac{\ep}{2} \frac{{\rm d}}{{\rm d}t} \intO{ |\Grad \vr |^2 } + 
\ep \int_{\partial \Omega} \Grad \vr \cdot \vc{n} \partial_t \vr {\rm d} S_x,
\]
where, in accordance with the boundary condition \eqref{E2}, 
\[
\ep \int_{\partial \Omega} \Grad \vr \cdot \vc{n} \partial_t \vr {\rm d} S_x = 
\int_{\partial \Omega} \partial_t \vr (\vr - \vr_B) [\vu_B \cdot \vc{n}]^- {\rm d} S_x = \frac{1}{2} 
\frac{{\rm d}}{{\rm d}t} \int_{\partial \Omega}(\vr - \vr_B)^2  [\vu_B \cdot \vc{n}]^- {\rm d} S_x.
\] 
Consequently, the desired estimate \eqref{E4a} follows by integrating the above relation over time. As pointed out 
in Crippa et al. \cite{CrDoSp}, the (unique) weak solution $\vr$ can be constructed by means of Faedo--Galerkin approximation. 
The latter being compatible with multiplication on $\partial_t \vr$, the above argument can be performed on the approximation 
and thus transfered to the limit via lower semi--continuity of the associated norms.

\subsubsection{Galerkin approximation of the momentum balance}

We look for approximate velocity field in the form 
\[
\vu = \vv + \vu_B, \ \vv \in C([0,T]; X_n). 
\]
Accordingly, the
approximate momentum balance reads
\begin{equation} \label{E5}
\begin{split}
\left[ \intO{ \vr \vu \cdot \bfphi } \right]_{t=0}^{t = \tau} &= 
\int_0^\tau \intO{ \Big[ \vr \vu \cdot \partial_t \bfphi + \vr \vu \otimes \vu : \Grad \bfphi 
+ p(\vr) \Div \bfphi - \partial F_\delta (\Ds \vu)  : \Grad \bfphi \Big] }\\
&- \ep \int_0^\tau \intO{ \Grad \vr \cdot \Grad \vu \cdot \bfphi } \dt
\end{split}
\end{equation}
for any $\bfphi \in C^1([0,T]; X_n)$, with the initial condition
\begin{equation} \label{E6}
\vr \vu(0, \cdot) = \vr_0 \vu_0, \ \vu_0 = \vv_0 + \vu_B, \ \vv_0 \in X_n.
\end{equation}

For fixed parameter $n$, $\delta > 0$, $\ep > 0$, the first level approximation are solutions $[\vr, \vu]$ of the problem 
of the parabolic problem \eqref{E1}--\eqref{E3}, and the Galerkin approximation \eqref{E5}, \eqref{E6}. 
The \emph{existence} of the approximate solutions at this level can be proved in the same way as in \cite{ChJiNo}.  Specifically, 
for $\vu = \vu_B + \vv$, $\vv \in C([0,T]; X_n)$, we identify the unique solution $\vr = 
\vr [\vu]$ of \eqref{E1}--\eqref{E3} and plug it as $\vr$ in \eqref{E5}. The unique solution $\vu = \vu[\vr]$ of \eqref{E5} defines a mapping 
\[
\mathcal{T}: \vv \in C([0,T]; X_n) \mapsto  \vr[\vv + \vu_B] = \mathcal{T}[\vc{v}] = (\vu[\vr] - \vu_B)  
\in C([0,T]; X_n).
\]
The first level approximate solutions $\vr = \vr_{\delta, \ep, n}$, $\vu = \vu_{\delta, \ep, n}$ are obtained via a fixed point 
though the mapping $\mathcal{T}$. The exact procedure is detailed in \cite{ChJiNo} and in { \cite{KwNo}}, from where we report the following result.\footnote{ The energy inequality (\ref{E7}) in \cite[Lemma 4.2]{KwNo} is derived under assumption $\Omega\in C^2$. This assumption is needed due to the treatment of the parabolic problem (\ref{E1}--\ref{E3}) via
the classical maximal regularity methods. With Lemmas \ref{EL1}--\ref{EL3} and Corollary \ref{EC1} at hand, the same proof can be carried out
without modifications also in Lipschitz domains.}

\begin{Proposition}[{\bf Approximate solutions, level I}] \label{EP1}

Let $\Omega \subset R^d$, $d=2,3$, be a bounded Lipschitz domain. Suppose that $p = p(\vr)$ and $F = F(\mathbb{D})$ satisfy 
\eqref{S2}, \eqref{S3}--\eqref{S5}. Let the data belong to the class 
\[
\vu_B \in C^{1}_c(R^d; R^d), \ \vr_{0} \in C^{1}(R^d),\ \vr_B \in C^1(\partial \Omega),\ \vr_0, {\vr_B} \geq \underline{\vr} > 0,\ 
\vu_0 = \vv_0 + \vu_B,\ \vv_0 \in X_n.
\]

Then for each fixed $\delta > 0$, $\ep > 0$, $n > 0$, there exists a solution 
$\vr$, $\vu$ of the approximate problem \eqref{E1}--\eqref{E3} and \eqref{E5}, \eqref{E6}. Moreover, the approximate energy inequality 
\begin{equation} \label{E7}
\begin{split}
&\left[ \intO{\left[ \frac{1}{2} \vr |\vu - \vu_B|^2 + P(\vr) \right] } \right]_{t = 0}^{ t = \tau} + 
\int_0^\tau \intO{ \partial F_\delta( \Ds \vu) : \Ds \vu } \dt \\ 
&+\int_0^\tau \int_{\Gamma_{\rm out}} P(\vr)  \vu_B \cdot \vc{n} \ \D S_x \dt\\
&- \int_0^\tau \int_{\Gamma_{\rm in}} \left[ P(\vr_B) - P'(\vr) (\vr_B - \vr) - P(\vr) \right] \vu_B \cdot \vc{n} 
\ {\rm d}S_x \ \dt + \ep \int_0^\tau \intO{ P''(\vr) |\Grad \vr|^2 } \dt
\\
\leq
&- 
\int_0^\tau \intO{ \left[ \vr \vu \otimes \vu + p(\vr) \mathbb{I} \right]  :  \Grad \vu_B } \dt + \int_0^\tau \intO{ {\vr} \vu  \cdot \vu_B \cdot \Grad \vu_B  } 
\dt\\ &+ \int_0^\tau \intO{ \partial F_\delta( \Ds \vu) : \Grad \vu_B } \dt - \int_0^\tau \int_{\Gamma_{\rm in}} P(\vr_B)  \vu_B \cdot \vc{n} \ \D S_x \dt  
\end{split}
\end{equation}
holds for any $0 \leq \tau \leq T$.

\end{Proposition}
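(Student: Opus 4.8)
\medskip
\noindent\emph{Proof proposal.} The only structural novelty with respect to \cite{ChJiNo}, \cite{KwNo} is that $\Omega$ is merely Lipschitz, so the plan is to run the three--step scheme of those papers --- local existence by a fixed point, uniform \emph{a priori} bounds via an energy identity, continuation to $[0,T]$ --- with the classical maximal--regularity treatment of the parabolic continuity equation replaced throughout by Lemmas \ref{EL1}--\ref{EL3} and Corollary \ref{EC1}, which are valid on any bounded Lipschitz domain (cf.\ the footnote above). Fix $n$, $\delta>0$, $\ep>0$. For $\vv\in C([0,T_0];X_n)$ set $\vu=\vv+\vu_B$ and let $\vr=\vr[\vu]$ be the weak solution of \eqref{E1}--\eqref{E3} furnished by Lemma \ref{EL1}; besides the bounds of that lemma it obeys the upper bound of Lemma \ref{EL2}, the strictly positive lower bound of Corollary \ref{EC1} and --- the data being smooth and time independent --- the additional estimate \eqref{E4a}. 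Inserting this $\vr$ into \eqref{E5}, \eqref{E6} and expanding $\vu=\vv+\vu_B$ turns the momentum balance --- because $X_n$ is finite dimensional and $0<\underline{\vr}(t)\le\vr\le\overline{\vr}$ --- into an integral (ODE--with--memory) system for the coordinate vector of $\vv$ with right--hand side Lipschitz on bounded sets; the solution map $\mathcal{T}\colon\vv\mapsto\vu[\vr[\vv+\vu_B]]-\vu_B$ is then continuous and, the coordinate vector being bounded in $C^1$, compact via the embedding $C^1([0,T_0];X_n)\hookrightarrow C([0,T_0];X_n)$. A contraction/Schauder argument on a sufficiently short interval produces $\vv\in C^1([0,T_0];X_n)$ solving \eqref{E5}, \eqref{E6} together with the associated $\vr$.

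Next I would establish the energy balance. Test \eqref{E5} with the admissible function $\bfphi=\vu-\vu_B=\vv$, test the weak continuity equation \eqref{E4} with $\tfrac12|\vv|^2$ and with $\vu_B\cdot\vv$ (both vanishing on $\partial\Omega$, so generating no boundary term there), and test its renormalization from Lemma \ref{EL3} (with $\chi=0$) with $B$ a $C^2_c$ modification of the pressure potential $P$, legitimate since $\vr$ is bounded and bounded away from $0$. Summing the four identities: the convective terms and the $\ep$--cross--terms cancel up to $\vu_B$--dependent quantities; the pressure contribution is reorganised via \eqref{P4}, which turns $\intO{\vr\vu\cdot\Grad P'(\vr)}$ into $\intO{p(\vr)\Div\vu}$ plus a boundary flux, so that the divergence terms cancel and only $-\intO{p(\vr)\Div\vu_B}$ survives; the boundary integrals are sorted along $\partial\Omega=\Gamma_{\rm in}\cup\Gamma_{\rm out}$ using \eqref{E2}; and the viscous term splits, by symmetry of $\partial F_\delta$, as $-\intO{\partial F_\delta(\Ds\vu):\Grad\vv}=-\intO{\partial F_\delta(\Ds\vu):\Ds\vu}+\intO{\partial F_\delta(\Ds\vu):\Grad\vu_B}$. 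This produces \eqref{E7} on $[0,T_0]$.

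For the \emph{a priori} bounds, in \eqref{E7} I would estimate $\intO{\partial F_\delta(\Ds\vu):\Grad\vu_B}$ through the convexity inequality $\partial F_\delta(\Ds\vu):\Ds\vu_B\le F_\delta(\Ds\vu_B)-F_\delta(\Ds\vu)+\partial F_\delta(\Ds\vu):\Ds\vu$ (so the dissipation reappears and cancels the one on the left), keep $\ep\intO{P''(\vr)|\Grad\vr|^2}$, $\int_{\Gamma_{\rm out}}P(\vr)\vu_B\cdot\vc{n}$ and the $\Gamma_{\rm in}$--term (non--negative by convexity of $P$) on the left, and bound what remains on the right linearly by the energy, using $\intO{p(\vr)}\le\frac{1}{\underline{a}}\intO{P(\vr)}$ from \eqref{S2} and absorbing $\intO{\vr}(\tau)$ --- itself controlled, through \eqref{E4} with $\varphi\equiv1$, by the inflow flux already dominated by the left--hand side of \eqref{E7}. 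A Gronwall argument then bounds $\intO{\left[\tfrac12\vr|\vu-\vu_B|^2+P(\vr)\right]}$ and $\int_0^\tau\intO{F_\delta(\Ds\vu)}\dt$ on the interval of existence in terms of $\data$, $T$, $n$, $\delta$, $\ep$ only. By the coercivity \eqref{F1a} and the equivalence of all norms on the finite--dimensional $X_n$ this bounds $\int_0^\tau\|\vv(t)\|_{X_n}\dt$, hence $\int_0^\tau\|\Div\vu\|_{L^\infty(\Omega)}\dt$, hence --- via Corollary \ref{EC1} --- $\vr$ from below uniformly on the interval of existence; combined with the energy bound and $\|\vv\|_{L^2(\Omega)}^2\le({\rm ess}\inf_{\Omega}\vr)^{-1}\intO{\vr|\vv|^2}$ this yields a pointwise--in--time bound on $\|\vv(\tau)\|_{X_n}$. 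Hence the local solution does not blow up and extends to $[0,T]$; being now defined on $[0,T]$, it satisfies \eqref{E7} for every $0\le\tau\le T$.

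The real obstacle is the continuation step: one must close a loop in which the parabolic bounds of Lemmas \ref{EL1}, \ref{EL2} and Corollary \ref{EC1} depend on $\sup\|\vv\|_{X_n}$ while \eqref{E7} controls $\|\vv\|$ only through $\vr$--weighted quantities. This is resolved by exploiting finite dimensionality of $X_n$ --- which upgrades the merely deviatoric, merely time--integrated coercivity of $F_\delta$ into pointwise--in--time control of $\|\vv\|_{X_n}$ --- together with the non--negativity and the precise form of the $\Gamma_{\rm in}/\Gamma_{\rm out}$ boundary terms in \eqref{E7}. The bookkeeping of those boundary integrals and of the $\ep$--cross--terms in the energy computation is the remaining, purely technical, point; the parabolic solvability is no longer an issue thanks to Lemmas \ref{EL1}--\ref{EL3}.
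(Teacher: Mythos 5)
Your proposal is correct and follows essentially the same route as the paper: the paper itself gives no self-contained proof of Proposition~\ref{EP1} but defers the fixed-point construction and the derivation of the energy inequality to \cite{ChJiNo} and \cite[Lemma 4.2]{KwNo}, adding only the footnote that the $C^2$-domain maximal regularity used there for the parabolic continuity equation is to be replaced by Lemmas~\ref{EL1}--\ref{EL3} and Corollary~\ref{EC1} so as to cover Lipschitz domains. Your sketch unrolls precisely that cited procedure --- solve the parabolic problem for given $\vv \in C([0,T_0];X_n)$ via Lemma~\ref{EL1}, feed the result into the Galerkin ODE system, apply Schauder, derive \eqref{E7} by testing with $\vv$, $\tfrac12|\vv|^2$, $\vu_B\cdot\vv$ and renormalizing with $B\approx P$, then close the continuation loop through the $F_\delta$-coercivity, the trace-free Korn inequality, Corollary~\ref{EC1} and the $\vr$-weighted kinetic energy --- and your identification of the Lipschitz issue and its resolution matches the footnote exactly.
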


\subsection{Second level approximation}

The next step is to let $\delta \to 0$ in the regularization of the potential $F_{\delta}$. This is an easy task as all the necessary bounds from the previous step remain valid. Indeed, in view of the uniform bound \eqref{F1a} and the energy inequality \eqref{E7}, 
we deduce a uniform bound on the traceless part of the symmetric velocity gradient, 
\[
\left\| \Ds \vu - \frac{1}{d} \Div \vu \mathbb{I} \right\|_{L^q((0,T) \times \Omega; R^{d \times d})} \leq c,\ q > 1 
\]
uniformly for $\delta \to 0$. In view of the fact $\vu = \vv + \vu_B$, $\vu|_{\partial \Omega} = 0$, we may use the 
$L^q-$version of Korn's inequality to obtain 
\[
\left\|  \Grad \vu  \right\|_{L^q((0,T) \times \Omega; R^{d \times d})},
\]
which, combined with the standard Poincar\' e inequality, yields the final conclusion 
\begin{equation} \label{E10}
\left\| \vu \right\|_{L^q(0,T; W^{1,q}(\Omega; R^d))} \leq c \ \mbox{for some}\ q > 1.
\end{equation}
At this stage, $n$ is fixed and all norms are equivalent on the finite--dimensional space $X_n$. In particular, 
\[
\left\| \Grad \vu \right\|_{L^\infty((0,T) \times \Omega; R^{d \times d})}
\]
remains bounded uniformly for $\delta \searrow 0$. Therefore it is standard to perform the limit $\delta \to 0$. Accordingly, we
have  
obtained the same conclusion as in Proposition \ref{EP1}, with \eqref{E5} replaced by 
\begin{equation} \label{E11}
\begin{split}
\left[ \intO{ \vr \vu \cdot \bfphi } \right]_{t=0}^{t = \tau} &= 
\int_0^\tau \intO{ \Big[ \vr \vu \cdot \partial_t \bfphi + \vr \vu \otimes \vu : \Grad \bfphi 
+ p(\vr) \Div \bfphi - \mathbb{S}  : \Grad \bfphi \Big] }\\
&- \ep \int_0^\tau \intO{ \Grad \vr \cdot \Grad \vu \cdot \bfphi } \dt,\ 
\mathbb{S} \in L^\infty((0,T) \times \Omega; R^{d \times d}_{\rm sym}),
\end{split}
\end{equation}
for any $\bfphi \in C^1([0,T]; X_n)$, and with the energy inequality in the form 
\begin{equation} \label{E12}
\begin{split}
&\left[ \intO{\left[ \frac{1}{2} \vr |\vu - \vu_B|^2 + P(\vr) \right] } \right]_{t = 0}^{ t = \tau} + 
\int_0^\tau \intO{\left[ F(\Ds \vu) + F^*(\mathbb{S}) \right] } \dt \\ 
&+\int_0^\tau \int_{\Gamma_{\rm out}} P(\vr)  \vu_B \cdot \vc{n} \ \D S_x \dt\\
&- \int_0^\tau \int_{\Gamma_{\rm in}} \left[ P(\vr_B) - P'(\vr) (\vr_B - \vr) - P(\vr) \right] \vu_B \cdot \vc{n} 
\ {\rm d}S_x \ \dt + \ep \int_0^\tau \intO{ P''(\vr) |\Grad \vr|^2 } \dt
\\
\leq
&- 
\int_0^\tau \intO{ \left[ \vr \vu \otimes \vu + p(\vr) \mathbb{I} \right]  :  \Grad \vu_B } \dt + \int_0^\tau \intO{ {\vr} \vu  \cdot \vu_B \cdot \Grad \vu_B  } 
\dt\\ &+ \int_0^\tau \intO{ \mathbb{S} : \Grad \vu_B } \dt - \int_0^\tau \int_{\Gamma_{\rm in}} P(\vr_B)  \vu_B \cdot \vc{n} \ \D S_x \dt  
\end{split}
\end{equation}

\subsection{The third level approximation}

Our next goal is to send $\ep \to 0$ in the viscous approximation \eqref{E1}. This is a bit more delicate than the preceding step as we 
are loosing compactness of the approximate density in the spatial variable. We start by collecting the necessary estimates 
independent of $\ep$. 

Similarly to the preceding section, we have \eqref{E10}, which, as $n$ is still fixed, gives rise to 
\begin{equation} \label{E13}
\| \vu \|_{L^q(0,T; W^{1,\infty}(\Omega))} \leq c,\ q > 1, 
\end{equation}
yielding, in view of Lemma \ref{EL2} and Corollary \ref{EC1}, the uniform bounds on the density 
\begin{equation} \label{E14}
0 < \underline{\vr} \leq \vr(t,x) \leq \Ov{\vr} \ \mbox{for all}\ (t,x) \in [0,T] \times \Ov{\Omega}.
\end{equation} 

Note that at this stage $\vr$ possess a well defined trace on $\partial \Omega$.
This finally implies, by virtue of the energy inequality \eqref{E12}, 
\begin{equation} \label{E15a}
\ep \int_0^T \intO{ |\Grad \vr |^2 } \dt \leq c, 
\end{equation}
and 
\begin{equation} \label{E15}
\sup_{\tau \in [0,T]} \left\| \vu(\tau, \cdot) \right\|_{W^{1, \infty}(\Omega; R^d)} \leq c 
\end{equation}

Now fix $n > 0$ and denote $[ \vre, \vue]$ the approximate solutions constructed in the previous step for each $\ep > 0$. 
In view of the uniform bounds established above, we may assume 
\[
\vre \to \vr \ \mbox{weakly-(*) in}\ L^\infty((0,T) \times \Omega) 
\ \mbox{and weakly in}\ C_{\rm weak}([0,T]; L^r(\Omega)) \ \mbox{for any}\ 1 < r < \infty, 
\] 
passing to a suitable subsequence as the case may be. Note that the second convergence follows form the weak bound on the 
time derivative $\partial_t \vre$ obtained from equation \eqref{E4}. We also have 
\[
\vre \to \vr \ \mbox{weakly-(*) in}\ L^\infty((0,T) \times \partial \Omega; \D S_x).
\]
In addition, the limit density admits the same upper and lower bounds as in \eqref{E14}. 

Similarly, 
\begin{equation} \label{E15b}
\vue \to \vu \ \mbox{weakly-(*) in}\ L^\infty(0,T; W^{1,\infty}(\Omega; R^d)), 
\end{equation}
and 
\[
\vre \vue \to \vc{m} \ \mbox{weakly-(*) in} \ L^\infty((0,T) \times \Omega; R^d)).
\]
Moreover, an abstract version of Arzela--Ascoli theorem yields 
\begin{equation} \label{E15c}
\vm = \vr \vu \ \mbox{a.a. in}\ (0,T) \times \Omega.
\end{equation}

\subsubsection{The limit in the approximate equation of continuity}

Keeping the gradient estimate \eqref{E15} in mind, it is easy to pass to the limit in the regularized continuity 
equation \eqref{E4}:
\begin{equation} \label{E16}
\begin{split}
\left[ \intO{ \vr \varphi } \right]_{t = 0}^{t = \tau}&= 
\int_0^\tau \intO{ \Big[ \vr \partial_t \varphi + \vr \vu \cdot \Grad \varphi  \Big] }
\dt \\ 
&- \int_0^\tau \int_{\Gamma_{\rm out}} \varphi \vr \vu_B \cdot \vc{n} \ {\rm d} S_x \dt - 
\int_0^\tau \int_{\partial \Gamma_{\rm in}} \varphi \vr_B \vu_B \cdot \vc{n}  \ {\rm d}S_x \dt,\ 
\vr(0, \cdot) = \vr_0 
\end{split}
\end{equation}
for any $\varphi \in C^1([0,T] \times \Ov{\Omega})$,
which is a weak formulation of the equation of continuity \eqref{P1}, with the boundary conditions \eqref{P5}, and the initial condition 
\eqref{P7}. Note that the quantity 
\[
\vm = \left\{ \begin{array}{l} \vr \vu_B \cdot \vc{n} \ \mbox{on}\ \Gamma_{\rm out},\\ \\
\vr_B \vu_B \cdot \vc{n} \ \mbox{on}\ \Gamma_{\rm in} \end{array} \right.  
\]
is the normal trace of the divergenceless vector field $[\vr, \vr \vu]$ on the lateral boundary 
$(0,T) \times \partial \Omega$ in the sense of Chen, Torres, and Ziemer \cite{ChToZi}.

\subsubsection{The limit in the approximate momentum equation}

The limit passage in the momentum equation \eqref{E11} is more delicate. First observe that $F^*$ is a superlinear function 
since $F$ is proper convex, ${\rm Dom}[F] = R^{d \times d}_{\rm sym}$. In particular, we may assume 
\[
\mathbb{S}_\ep \to \mathbb{S} \ \mbox{weakly in}\ L^1((0,T) \times \Omega; R^{d \times d}). 
\]
Next, we deduce from \eqref{E11} that 
\[
\partial_t 
\Pi_n[ \vre \vue ] \ \mbox{bounded in}\ L^2(0,T; X_n), 
\]
where $\Pi_n: L^2 \to X_n$ is the associated orthogonal projection. In particular, in view of \eqref{E15b}, \eqref{E15c}, 
we may infer that 
\[
\vre \vue \otimes \vue \to \vr \vu \otimes \vu \ \mbox{weakly-(*) in}\ 
L^\infty((0,T) \times \Omega). 
\]
Consequently, we may let $\ep \to 0$ in \eqref{E11} obtaining 
\begin{equation} \label{E20}
\begin{split}
\left[ \intO{ \vr \vu \cdot \bfphi } \right]_{t=0}^{t = \tau} &= 
\int_0^\tau \intO{ \Big[ \vr \vu \cdot \partial_t \bfphi + \vr \vu \otimes \vu : \Grad \bfphi 
+ \Ov{p(\vr)} \Div \bfphi - \mathbb{S}  : \Grad \bfphi \Big] }
\end{split}
\end{equation}
for any $\bfphi \in C^1([0,T]; X_n)$. Here $\Ov{p(\vr)} \in L^\infty((0,T) \times \Omega)$ stands for the 
weak limit of the sequence $\{ p(\vre) \}_{\ep > 0}$. As $p = p(\vr)$ is non--linear (convex), removing the bar is 
\emph{equivalent} to showing pointwise convergence of the approximate densities. This might be possible by manipulating 
the renormalized equation in Lemma \ref{EL3}. However, this is quite technical and we content ourselves with \eqref{E20}. 

\subsubsection{Conclusion} 

Finally, employing the weak lower semi--continuity of the potentials $F$ and $F^*$, we may perform the limit in the energy balance 
\eqref{E12} obtaining 
\begin{equation} \label{E21}
\begin{split}
&\left[ \intO{\left[ \frac{1}{2} \vr |\vu - \vu_B|^2 + \Ov{P(\vr)} \right] } \right]_{t = 0}^{ t = \tau} + 
\int_0^\tau \intO{\left[ F(\Ds \vu) + F^*(\mathbb{S}) \right] } \dt \\ 
&+\int_0^\tau \int_{\Gamma_{\rm out}} \Ov{P(\vr)}  \vu_B \cdot \vc{n} \ \D S_x \dt\\
\leq
&- 
\int_0^\tau \intO{ \left[ \vr \vu \otimes \vu + \Ov{p(\vr)} \mathbb{I} \right]  :  \Grad \vu_B } \dt + \int_0^\tau \intO{ {\vr} \vu  \cdot \vu_B \cdot \Grad \vu_B  } 
\dt\\ &+ \int_0^\tau \intO{ \mathbb{S} : \Grad \vu_B } \dt - \int_0^\tau \int_{\Gamma_{\rm in}} P(\vr_B)  \vu_B \cdot \vc{n} \ \D S_x \dt  
\end{split}
\end{equation}

To conclude, we summarize the result obtained in the part. 

\begin{Proposition}[{\bf Approximate solutions, level III}] \label{EP2}

Let $\Omega \subset R^d$, $d=2,3$, be a bounded Lipschitz domain. Suppose that $p = p(\vr)$ and $F = F(\mathbb{D})$ satisfy 
\eqref{S2}, \eqref{S3}--\eqref{S5}. Let the data belong to the class 
\[
\vu_B \in C^{1}_c(R^d; R^d), \ \vr_{0} \in C^{1}(R^d),\ \vr_B \in C^1(\partial \Omega),\ \vr_0, {\vr_B} \geq \underline{\vr} > 0,\ 
\vu_0 = \vv_0 + \vu_B,\ \vv_0 \in X_n.
\]

Then for each fixed $n > 0$, there exists a solution 
$\vr $, $\vu$, 
\[
\vr \in L^\infty((0,T) \times \Omega),\ 0 < \underline{\vr} 
\leq \vr(t,x) \leq \Ov{\vr},\ \vu = \vu_B + \vv ,\ \vv \in C([0,T]; X_n)
\]
of the approximate problem:
\begin{itemize}
\item
\begin{equation} \label{E22}
\begin{split}
\left[ \intO{ \vr \varphi } \right]_{t = 0}^{t = \tau}&= 
\int_0^\tau \intO{ \Big[ \vr \partial_t \varphi + \vr \vu \cdot \Grad \varphi  \Big] }
\dt \\ 
&- \int_0^\tau \int_{\Gamma_{\rm out}} \varphi \vr \vu_B \cdot \vc{n} \ {\rm d} S_x \dt - 
\int_0^\tau \int_{\partial \Gamma_{\rm in}} \varphi \vr_B \vu_B \cdot \vc{n}  \ {\rm d}S_x \dt,\ 
\vr(0, \cdot) = \vr_0 
\end{split}
\end{equation}
for any $\varphi \in C^1([0,T] \times \Ov{\Omega})$;
\item 
\begin{equation} \label{E23}
\begin{split}
\left[ \intO{ \vr \vu \cdot \bfphi } \right]_{t=0}^{t = \tau} &= 
\int_0^\tau \intO{ \Big[ \vr \vu \cdot \partial_t \bfphi + \vr \vu \otimes \vu : \Grad \bfphi 
+ \Ov{p(\vr)} \Div \bfphi - \mathbb{S}  : \Grad \bfphi \Big] }
\end{split}
\end{equation}
for any $\bfphi \in C^1([0,T]; X_n)$;
\item 
\begin{equation} \label{E24}
\begin{split}
&\left[ \intO{\left[ \frac{1}{2} \vr |\vu - \vu_B|^2 + \Ov{P(\vr)} \right] } \right]_{t = 0}^{ t = \tau} + 
\int_0^\tau \intO{\left[ F(\Ds \vu) + F^*(\mathbb{S}) \right] } \dt \\ 
&+\int_0^\tau \int_{\Gamma_{\rm out}} \Ov{P(\vr)}  \vu_B \cdot \vc{n} \ \D S_x \dt\\
\leq
&- 
\int_0^\tau \intO{ \left[ \vr \vu \otimes \vu + \Ov{p(\vr)} \mathbb{I} \right]  :  \Grad \vu_B } \dt + \int_0^\tau \intO{ {\vr} \vu  \cdot \vu_B \cdot \Grad \vu_B  } 
\dt\\ &+ \int_0^\tau \intO{ \mathbb{S} : \Grad \vu_B } \dt - \int_0^\tau \int_{\Gamma_{\rm in}} P(\vr_B)  \vu_B \cdot \vc{n} \ \D S_x \dt.  
\end{split}
\end{equation}
\end{itemize}

The symbol $\Ov{p(\vr)}$ and $\Ov{P(\vr)}$ stand for the weak limits of a bounded sequence 
$\{ p(\vr_\ep) \}_{\ep > 0}$ and $\{ P(\vre) \}_{\ep > 0}$, respectively.

\end{Proposition}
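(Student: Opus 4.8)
The plan is to pass to the limit $\ep\to0$ in the level-II problem \eqref{E11}--\eqref{E12} with $n$ kept fixed. The decisive simplification is that the velocity lives in the finite-dimensional space $X_n$, on which all norms are equivalent, so the only quantity for which spatial compactness is genuinely lost is the density. First I would record the $\ep$-uniform bounds, all already in the text: \eqref{E13}, $\|\vue\|_{L^q(0,T;W^{1,\infty})}\le c$; the two-sided density bound \eqref{E14}, obtained by feeding \eqref{E13} into Lemma \ref{EL2} and Corollary \ref{EC1}; the vanishing-viscosity bound \eqref{E15a}, $\ep\int_0^T\intO{|\Grad\vre|^2}\dt\le c$; and the stronger \eqref{E15}, $\sup_\tau\|\vue(\tau,\cdot)\|_{W^{1,\infty}}\le c$, which follows from \eqref{E12} after absorbing the $\mathbb{S}_\ep$-term on its right-hand side into $\int F^*(\mathbb{S}_\ep)$ via superlinearity of $F^*$ and a Gr\"onwall argument. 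Along a subsequence these give $\vre\to\vr$ weak-$(*)$ in $L^\infty$ and in $C_{\rm weak}([0,T];L^r(\Omega))$ for every finite $r$ (the latter from the $\partial_t\vre$-bound read off \eqref{E4}), with the same pointwise bounds for $\vr$; $\vre\to\vr$ weak-$(*)$ in $L^\infty((0,T)\times\partial\Omega)$; $\vue\to\vu$ weak-$(*)$ in $L^\infty(0,T;W^{1,\infty})$; and, $F^*$ being superlinear ($F$ proper convex with full domain) and $\int_0^T\intO{F^*(\mathbb{S}_\ep)}\dt$ bounded, the de la Vall\'ee--Poussin criterion yields equiintegrability, hence $\mathbb{S}_\ep\to\mathbb{S}$ weakly in $L^1$. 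Finally, an abstract Arzel\`a--Ascoli argument based on the $L^2(0,T;X_n)$-bound for $\partial_t\Pi_n[\vre\vue]$ upgrades the weak convergence of $\vre\vue$ to the identity \eqref{E15c}, $\vm=\vr\vu$ a.e.

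Next I would pass to the limit term by term. In \eqref{E4} the parabolic term $\ep\Grad\vre\cdot\Grad\varphi$ disappears by \eqref{E15a}, and all remaining terms, including the boundary integrals, are linear in $\vre$ or products of a weakly convergent sequence with a fixed smooth factor; this produces \eqref{E16}. In \eqref{E11} the artificial term $\ep\intO{\Grad\vre\cdot\Grad\vue\cdot\bfphi}$ is $O(\sqrt\ep)$ by Cauchy--Schwarz, \eqref{E15a} and the $L^\infty$-bound on $\Grad\vue$; the convective term converges since $\vre\vue\otimes\vue\to\vr\vu\otimes\vu$ weak-$(*)$ in $L^\infty$ (strong convergence of $\vue$ from finite dimensions, plus \eqref{E15c}); the viscous term converges by $\mathbb{S}_\ep\rightharpoonup\mathbb{S}$ in $L^1$ tested against the smooth $\Grad\bfphi$; and the sole surviving nonlinearity, the pressure, is recorded as the weak-$(*)$ $L^\infty$ limit $\Ov{p(\vr)}$. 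This yields \eqref{E20}, i.e.\ \eqref{E23}.

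For the energy inequality I would take the $\liminf$ of \eqref{E12}. On the left, the kinetic term and the dissipation potentials pass by weak lower semicontinuity, since $(\vr,\vm)\mapsto|\vm-\vr\vu_B|^2/\vr$, $F$ and $F^*$ are convex; the potential terms $\intO{\Ov{P(\vr)}}(\tau)$ and $\int_{\Gamma_{\rm out}}\Ov{P(\vr)}\vu_B\cdot\vc{n}\,\D S_x$ are again just the weak limits of the corresponding sequences (using the $\partial_t P(\vre)$-bound from the renormalized equation of Lemma \ref{EL3} and the boundary convergence of $\vre$). The two remaining left-hand terms of \eqref{E12}, the vanishing-viscosity contribution $\ep\intO{P''(\vre)|\Grad\vre|^2}$ and the $\Gamma_{\rm in}$-term $-\int_{\Gamma_{\rm in}}[P(\vr_B)-P'(\vre)(\vr_B-\vre)-P(\vre)]\vu_B\cdot\vc{n}\,\D S_x$, are both nonnegative (the bracket is $\ge0$ by convexity of $P$ while $\vu_B\cdot\vc{n}\le0$ on $\Gamma_{\rm in}$), so they may be discarded from the left before passing to the limit. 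The right-hand side converges by the same weak convergences, the surviving $\Gamma_{\rm in}$-term $-\int_{\Gamma_{\rm in}}P(\vr_B)\vu_B\cdot\vc{n}\,\D S_x$ being unaffected since $\vr_B$, $\vu_B$ are fixed. Collecting \eqref{E16}, \eqref{E23} and \eqref{E21} gives Proposition \ref{EP2}.

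The step I expect to be the real obstacle is that, lacking spatial compactness of $\vre$ as $\ep\to0$, one cannot identify $\Ov{p(\vr)}$ with $p(\vr)$ nor $\Ov{P(\vr)}$ with $P(\vr)$: removing the bars is equivalent to strong $L^1$-convergence of the densities, which at this level would require a delicate effective-viscous-flux / oscillation-defect analysis built on the renormalized equation of Lemma \ref{EL3}. Since this identification is technical and, crucially, not needed for the final passage $n\to\infty$ (where the Reynolds stress and the energy defect absorb precisely such concentrations), the efficient route is to carry the bars unchanged through Proposition \ref{EP2}. A secondary point deserving care is that the identities and the energy inequality are asserted for every $\tau\in[0,T]$, not merely a.e.\ $\tau$; this is recovered from the weak-in-time continuity of $\vre$ and $\vre\vue$ together with Fatou-type lower semicontinuity in $\tau$ of the potential integrals.
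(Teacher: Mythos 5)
Your proposal is correct and follows essentially the same route as the paper: establish the $\ep$-uniform bounds \eqref{E13}--\eqref{E15}, extract weak-$(*)$ limits of $\vre$, $\vue$, $\vre\vue$, identify $\vm=\vr\vu$ via an Arzel\`a--Ascoli/Aubin--Lions argument, pass to the limit termwise in \eqref{E4} and \eqref{E11} using the superlinearity of $F^*$ for the $L^1$ weak convergence of $\mathbb{S}_\ep$, and take the $\liminf$ in the energy inequality via convexity and lower semicontinuity while discarding the nonnegative $\ep$-viscosity and $\Gamma_{\rm in}$-convexity terms on the left. Your closing remark about why the bars on $\Ov{p(\vr)}$, $\Ov{P(\vr)}$ must be retained, and why this is harmless for the later $n\to\infty$ passage, matches the paper's own discussion almost verbatim.
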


\subsection{Final limit}

Our ultimate goal is to perform the limit $n \to \infty$ in the family of approximate solutions obtained in 
Proposition \ref{EP2}. Our first observation uses the hypotheses \eqref{S2}, namely 
\[
P''(\vr) = \frac{p'(\vr)}{\vr} > 0 \ \vr > 0.
\]
Consequently, $P$ is a strictly convex function and we have 
\[
\Ov{P(\vr)} - P(\vr) \geq 0.
\] 
Next, again by virtue of hypothesis \eqref{S2}, 
\begin{equation} \label{E25a}
\Ov{P(\vr)} - P(\vr) \geq \underline{a} (\Ov{p(\vr)} - p(\vr) ),\ 
\Ov{a} \left( \Ov{p(\vr)} - p(\vr) \right) \geq \Ov{P(\vr)} - P(\vr).
\end{equation}
Thus we may rewrite \eqref{E23} as 
\begin{equation} \label{E25}
\begin{split}
\left[ \intO{ \vr \vu \cdot \bfphi } \right]_{t=0}^{t = \tau} &= 
\int_0^\tau \intO{ \Big[ \vr \vu \cdot \partial_t \bfphi + \vr \vu \otimes \vu : \Grad \bfphi 
+ {p(\vr)} \Div \bfphi - \mathbb{S}  : \Grad \bfphi \Big] }\\ 
&+ \int_0^\tau \intO{ \left[ \Ov{p(\vr)} - p(\vr) \right]  \Div \bfphi } \dt,
\end{split}
\end{equation}
while the energy inequality \eqref{E24} reduces to 
\begin{equation} \label{E26}
\begin{split}
&\left[ \intO{{\left( \frac{1}{2} \frac{|\vm|^2}{\vr} - \vm \cdot \vu_B + \frac{1}{2}\vr |\vu_B|^2  + P(\vr)\right)}} \right]_{t = 0}^{ t = \tau} + 
\int_0^\tau \intO{\left[ F(\Ds \vu) + F^*(\mathbb{S}) \right] } \dt \\ 
&+\int_0^\tau \int_{\Gamma_{\rm out}} {P(\vr)}  \vu_B \cdot \vc{n} \ \D S_x \dt + 
\intO{ \left[ \Ov{P(\vr)} - P(\vr) \right] (\tau, \cdot)} 
\\
\leq
&- 
\int_0^\tau \intO{ \left[ {\frac{\vm \otimes \vm}\vr} + {p(\vr)} \mathbb{I} \right]  :  \Grad \vu_B } \dt + \int_0^\tau \intO{ {\vr} \vu  \cdot \vu_B \cdot \Grad \vu_B  } 
\dt\\ &+ \int_0^\tau \intO{ \mathbb{S} : \Grad \vu_B } \dt - \int_0^\tau \int_{\Gamma_{\rm in}} P(\vr_B)  \vu_B \cdot \vc{n} \ \D S_x \dt\\
&+ \int_0^\tau \intO{ \left( p(\vr) - \Ov{p(\vr)} \right) \Div \vu_B } \dt,  
\end{split}
\end{equation}
{where the kinetic energy 
$\frac{1}{2} \vr |\vu - \vu_B|^2$
in \eqref{E24} is now more conveniently  written as
$\frac{1}{2} \frac{|\vm|^2}{\vr}\ -\vm \cdot \vu_B$ $ + \frac{1}{2}\vr |\vu_B|^2$ and similarly $\vr\vu\otimes\vu$ is written as $\frac{\vm\otimes\vm}\vr$, where $\vm=\vr\vu$.
We set }
\[
\mathfrak{R} := [ \Ov{p(\vr)} - p(\vr) ] \mathbb{I},\ \mathfrak{E} := \Ov{P(\vr)} - P(\vr).
\]
{These terms may be interpreted as the Reynolds and energy defect measure, respectively.} In accordance with \eqref{E25a}, we have 
the defect compatibility condition
\begin{equation} \label{E27}
\mathfrak{E} = \Ov{P(\vr)} - P(\vr) \leq \Ov{a} \left(\Ov{p(\vr)} - p(\vr) \right) = 
\frac{\Ov{a}}{d}{\rm tr}[ \mathfrak{R} ] \leq \frac{\Ov{a}}{\underline{a}} \left( \Ov{P(\vr)} - P(\vr) 
\right) = \frac{\Ov{a}}{\underline{a}} \mathfrak{E}.
\end{equation} 

We are ready to perform the limit $n \to \infty$. Let $[\vr_n, \vm_n]$ be a sequence of solutions obtained in Proposition \ref{EP2},
with the associated viscous stress tensors $\mathbb{S}_n$, the energy defects $\mathfrak{E}_n$, and the Reynolds tenors $\mathfrak{R}_n$. An easy application of Gronwall's lemma shows that the total energy represented by the epxression on the left--hand side of the energy inequality \eqref{E26} remains bounded uniformly for $n \to \infty$. Consequently, extracting suitable subsequences if necessary, we may suppose 
\[
\vr_n \to \vr \ \mbox{in}\ C_{{\rm weak}}([0,T]; L^\gamma(\Omega)),\ 
\vr_n|_{\Gamma_{\rm out}} \to \vr \ \mbox{weakly-(*) in}\ L^\infty(0,T; L^\gamma (\Gamma_{\rm out}; |\vu_b \cdot \vc{n}| \D S_x)), 
\]
\[
\vm_n = \vr_n \vu_n \to \vm \ \mbox{weakly-(*) in}\ L^\infty(0,T; L^{\frac{2 \gamma}{\gamma + 1}}(\Omega; R^d)).
\]

Next, repeating the arguments leading to \eqref{E10}, we get 
\[
\vu_n \to \vu \ \mbox{weakly in}\ L^q(0,T; W^{1,q}(\Omega; R^d)), 
\]
and also 
\[
\mathbb{S}_n \to \mathbb{S} \ \mbox{weakly in}\ L^1((0,T) \times \Omega; R^{d \times d}_{\rm sym}).
\]
Our goal is to show that 
\begin{equation} \label{E29}
\vm = \vr \vu \ \mbox{a.a. in}\ (0,T) \times \Omega. 
\end{equation}
To this end, we report the following result proved in Appendix. 

\begin{Lemma} \label{EWANL1}

Let $Q = (0,T) \times \Omega$, where $\Omega \subset R^d$ is a bounded domain. Suppose that 
\[
r_n \to r \ \mbox{weakly in}\ L^p(Q), \ v_n \to v \ \mbox{weakly in}\ L^q(Q),\ p > 1, q > 1, 
\]
and
\[
r_n v_n \to w \ \mbox{weakly in}\ L^r(Q), \ r >  1.
\]
In addition, let 
\[
\partial_t r_n = \Div \vc{g}_n + h_n\ \mbox{in}\ \mathcal{D}'(Q),\ \| \vc{g}_n \|_{L^s(Q; R^d)} \aleq 1,\ s > 1,\ 
h_n \ \mbox{precompact in}\ W^{-1,z},\ z > 1, 
\]
and
\[
\left\| \Grad v_n \right\|_{\mathcal{M}(Q; R^d)} \aleq 1 \ \mbox{uniformly for}\ n \to \infty.
\]

Then 
\[
w = r v \ \mbox{a.a. in}\ Q.
\]

\end{Lemma}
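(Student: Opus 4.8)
The plan is to show $r_n v_n \to rv$ in $\mathcal{D}'(Q)$; since $r_n v_n \to w$ weakly in $L^r(Q)$ with $r>1$, uniqueness of distributional limits then forces $w = rv$ a.a.\ in $Q$. By a partition of unity it is enough to test against a fixed $\psi \in C^\infty_c(Q)$, so put $K = \supp\psi \Subset Q$. The first ingredient is strong compactness of $r_n$ in a negative--order space. From $\partial_t r_n = \Div \vc g_n + h_n$, with $\Div\vc g_n$ bounded in $L^s(0,T; W^{-1,s}(\Omega))$ ($\vc g_n$ being bounded in $L^s(Q;R^d)$) and $h_n$ precompact — hence bounded — in $W^{-1,z}$, the time derivative $\partial_t r_n$ is bounded in a space of the type $L^\sigma(0,T; W^{-1,\sigma}(\Omega))$, $\sigma>1$ (downgrading the space of $h_n$ by a Sobolev embedding if necessary, the precompact part being harmless here). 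Together with the uniform bound on $r_n$ in $L^p(0,T; L^p(\Omega))$, the compact embedding $L^p(\Omega) \hookrightarrow\hookrightarrow W^{-1,p}(\Omega)$ (Rellich--Kondrachov, by duality) and $W^{-1,p}(\Omega) \hookrightarrow W^{-1,\sigma}(\Omega)$, the Aubin--Lions--Simon lemma yields, along a subsequence,
\[
r_n \to r \quad \text{strongly in}\ L^p\big(0,T; W^{-1,p}(\Omega)\big).
\]

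Two elementary facts about $v_n$ will be used. Since $v_n$ is bounded in $L^q(Q)$ and $r_n v_n$ in $L^r(Q)$ with $q,r>1$ on the bounded set $Q$, both $\{|v_n|\}_n$ and $\{|r_n v_n|\}_n$ are equi--integrable on $Q$ (by H\"older, $\int_A |v_n| \leq |A|^{1-1/q}\|v_n\|_{L^q(Q)}$, and likewise for $r_n v_n$). Secondly, for a spatial mollifier $\omega_\delta$ one has $\|f *_x \omega_\delta - f\|_{L^1(K)} \leq C \delta \, \|\Grad f\|_{\mathcal M(Q;R^d)}$ for every $f$ whose spatial gradient is a bounded measure on $Q$. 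For $\ell \in \mn$ set $T_\ell(v_n) = \min\{\ell, \max\{-\ell, v_n\}\}$; as $T_\ell$ is $1$--Lipschitz, $\|T_\ell(v_n)\|_{L^\infty(Q)} \leq \ell$ and $\|\Grad T_\ell(v_n)\|_{\mathcal M(Q;R^d)} \leq \|\Grad v_n\|_{\mathcal M(Q;R^d)}$. Because $|v_n - T_\ell(v_n)| \leq |v_n|\, \ind_{\{|v_n| > \ell\}}$ and $|\{|v_n| > \ell\}| \leq \ell^{-q}\|v_n\|_{L^q(Q)}^q$ uniformly in $n$, the equi--integrability above gives $\eta(\ell) := \sup_n \|r_n(v_n - T_\ell(v_n))\|_{L^1(K)} \to 0$ and $\zeta(\ell) := \sup_n \|v_n - T_\ell(v_n)\|_{L^1(K)} \to 0$ as $\ell \to \infty$.

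The core is the limit passage in the truncated product for fixed $\ell$. As $|r_n T_\ell(v_n)| \leq |r_n v_n|$, along a subsequence $r_n T_\ell(v_n) \to w_\ell$ weakly in $L^r(Q)$ and $T_\ell(v_n) \to \Ov{T_\ell v}$ weakly-$(*)$ in $L^\infty(Q)$; I claim $w_\ell = r\, \Ov{T_\ell v}$ a.a.\ in $Q$. Writing $(T_\ell v_n)^\delta := T_\ell(v_n) *_x \omega_\delta$,
\[
\int_Q \psi\, r_n T_\ell(v_n) = \int_Q \psi\, r_n (T_\ell v_n)^\delta + \int_Q \psi\, r_n \big( T_\ell(v_n) - (T_\ell v_n)^\delta \big).
\]
The last integral is at most $\|\psi\|_{L^\infty}\|r_n\|_{L^p(K)} \| T_\ell(v_n) - (T_\ell v_n)^\delta \|_{L^{p'}(K)} \leq C(\ell)\, \delta^{1/p'}$, uniformly in $n$, obtained by interpolating the $L^1(K)$--mollification bound against the $L^\infty(Q)$--bound $\leq 2\ell$. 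For fixed $\ell, \delta$ the functions $\psi (T_\ell v_n)^\delta$ are bounded in $L^\infty(0,T; W^{1,\infty}(\Omega))$ (with a bound depending on $\ell,\delta$ but not on $n$) and converge weakly-$(*)$ to $\psi\, \big(\Ov{T_\ell v} *_x \omega_\delta\big)$; pairing the strong convergence $r_n \to r$ in $L^p(0,T; W^{-1,p}(\Omega))$ with this weak convergence in $L^{p'}(0,T; W^{1,p'}_0(\Omega))$ gives $\int_Q \psi\, r_n (T_\ell v_n)^\delta \to \int_Q \psi\, r\, \big(\Ov{T_\ell v} *_x \omega_\delta\big)$ as $n \to \infty$. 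Sending then $\delta \to 0$ (again interpolating the $L^1$--mollification bound against the $L^\infty$--bound $\leq \ell$) we arrive at $\int_Q \psi\, r_n T_\ell(v_n) \to \int_Q \psi\, r\, \Ov{T_\ell v}$, i.e.\ $w_\ell = r\, \Ov{T_\ell v}$.

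Finally, by weak lower semicontinuity of the $L^1$ norm against the estimates of the second paragraph, $\|w - w_\ell\|_{L^1(K)} \leq \eta(\ell)$ and $\|v - \Ov{T_\ell v}\|_{L^1(K)} \leq \zeta(\ell)$, so $w_\ell \to w$ and $\Ov{T_\ell v} \to v$ in $L^1(K)$ as $\ell \to \infty$; passing to a subsequence, $\Ov{T_\ell v} \to v$ a.a.\ in $K$, whence $w_\ell = r\, \Ov{T_\ell v} \to rv$ a.a.\ in $K$, and comparing with $w_\ell \to w$ (a.a.\ along a further subsequence) yields $w = rv$ a.a.\ in $K$. As $K \Subset Q$ was arbitrary, $w = rv$ a.a.\ in $Q$. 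I expect the delicate point to be that no relation between the exponents $p$ and $q$ is assumed, so the mollification error of $v_n$ cannot be absorbed against $r_n$ by a bare H\"older estimate; the remedy is the $L^\infty$--truncation of $v_n$ — admissible precisely because truncation does not increase the total variation — which decouples that error from the integrability of $r_n$, used in tandem with the hypothesis $r_n v_n \in L^r(Q)$, $r>1$, both to dispose of the truncation tail (equi--integrability) and to identify the limit through a.a.\ convergence rather than a product of two weak limits. The remaining point, the exact Aubin--Lions bookkeeping on a merely bounded domain, is routine.
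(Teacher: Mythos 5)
Your proof is correct and shares the paper's truncation scaffolding, but replaces its key compactness mechanism by a different one. The paper, after the same reduction to a uniformly bounded truncation $T_k(v_n)$, invokes the Div--Curl lemma applied in space--time to the vector fields $\vc{U}_n = [r_n, -\vc{g}_n]$ and $\vc{V}_n = [T_k(v_n), 0, \dots, 0]$, using that ${\rm DIV}_{t,x}\vc{U}_n = h_n$ is precompact in $W^{-1,z}$ and ${\rm CURL}_{t,x}\vc{V}_n$ is controlled by $\Grad_x T_k(v_n)$, bounded in $\mathcal{M}(Q)$. You instead prove the relevant compensated-compactness statement by hand: Aubin--Lions--Simon gives strong compactness of $r_n$ in $L^p(0,T;W^{-1,p}(\Omega))$, and you then test it against the spatially mollified truncation $(T_\ell v_n)^\delta$, using the BV-mollification estimate interpolated against the $L^\infty$ bound to control the error uniformly in $n$ before sending $\delta \to 0$ and $\ell \to \infty$. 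Both routes are legitimate; yours is more elementary and self-contained, at the cost of the explicit Aubin--Lions bookkeeping, while the paper's is shorter but relies on a measure-valued variant of Div--Curl as a black box.

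One small point worth tightening: the passage from ``$h_n$ precompact in $W^{-1,z}(Q)$'' to a Bochner bound on $\partial_t r_n$ is not literally a Sobolev embedding. The clean route is to write $h_n = \partial_t a^0_n + \Div_x \vc{a}_n$ with $(a^0_n, \vc{a}_n)$ precompact in $L^z(Q)$, apply Aubin--Lions to $r_n - a^0_n$ (whose time derivative is $\Div_x(\vc{g}_n + \vc{a}_n)$), and absorb the precompact piece $a^0_n$ separately. Your phrase ``the precompact part being harmless'' points in the right direction, and in the paper's application $h_n \equiv 0$ so nothing is at stake there, but in a proof of the general lemma this splitting deserves a sentence.
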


A direct application of Lemma \ref{EWANL1} to 
\[
r_n = \vr_n,\ v_n = u^i_n,\ i=1,\dots,d, \ \vc{g}_n = - \vr_n \vu_n,\ h_n = 0,\ p = \gamma,\ r = s = 
\frac{2 \gamma}{\gamma + 1}
\]
yields \eqref{E29}.

At this stage, we are able to perform the limit in the equation of continuity \eqref{E22} to obtain \eqref{W1}. We can also approximate 
the initial data 
\[
\vr_{0,n} \to \vr_0 \ \mbox{in}\ L^\gamma(\Omega)
\]
to obtain \eqref{W2} with the desired finite energy initial data. 

The next step is to perform the same limit in the momentum equation \eqref{E25}. To this end, we first observe that 
\[
\mathfrak{R}_n \equiv [\Ov{p(\vr)} - p(\vr_n)]_n \to \mathfrak{R}^1 
\ \mbox{weakly-(*) in}\ L^\infty(0,T; \mathcal{M}(\Ov{\Omega})),\ 
\]
\[
\mathfrak{E}_n \equiv [\Ov{P(\vr)} - P(\vr_n)]_n \to \mathfrak{E}^1 
\ \mbox{weakly-(*) in}\ L^\infty(0,T; \mathcal{M}(\Ov{\Omega})),
\]
where the limit measures retain the compatibility condition \eqref{E27}
\[
0 \leq \mathfrak{E}^1 \leq 
{\Ov{a}}\mathfrak{R}^1 \leq \frac{\Ov{a}}{\underline{a}} \mathfrak{E}^1.
\]

Similarly, we have 
\[
\begin{split}
\vr_n \vu_n \otimes \vu_n + p(\vr_n) \mathbb{I} &= 
1_{\vr_n > 0} \frac{\vm_n \otimes \vm_n}{\vr_n} + p(\vr_n) \mathbb{I} 
\to \Ov{ \left[ 1_{\vr > 0} \frac{\vm \otimes \vm}{\vr} + p(\vr) \mathbb{I} \right]} \\ 
&\mbox{weakly-(*) in}\ L^\infty(0,T; \mathcal{M}(\Ov{\Omega}; R^{d \times d}_{\rm sym})),
\end{split}
\]
and 
\[
\begin{split}
\frac{1}{2} \vr_n |\vu_n|^2 + P(\vr_n) &= 
\frac{1}{2} \frac{|\vm_n|^2}{\vr_n} + P(\vr_n) \to \Ov{ \left[ \frac{1}{2} \frac{|\vm|^2}{\vr} + P(\vr) \right] }
\\ 
&\mbox{weakly-(*) in}\ L^\infty(0,T; \mathcal{M}(\Ov{\Omega})).
\end{split}
\]
We set 
\[
\begin{split}
\mathfrak{R}^2 &= \Ov{ \left[ 1_{\vr > 0} \frac{\vm \otimes \vm}{\vr} + p(\vr) \mathbb{I} \right]} - 
\left( 1_{\vr > 0} \frac{\vm \otimes \vm}{\vr} + p(\vr) \mathbb{I} \right)\\ &= \Ov{ \left[ 1_{\vr > 0} \frac{\vm \otimes \vm}{\vr} + p(\vr) \mathbb{I} \right]} - \left( \vr \vu \otimes \vu + p(\vr) \mathbb{I} \right),
\end{split}
\]
and 
\[
\mathfrak{E}^2 = 
\Ov{ \left[ \frac{1}{2} \frac{|\vm|^2}{\vr} + P(\vr) \right] } - 
\left[ \frac{1}{2} \frac{|\vm|^2}{\vr} + P(\vr) \right] = \Ov{ \left[ \frac{1}{2} \frac{|\vm|^2}{\vr} + P(\vr) \right] }
- \left[ \frac{1}{2} \vr |\vu|^2 + P(\vr) \right] 
\]
noting the relation 
\[
\underline{d} \mathfrak{E}^2 \leq {\rm tr}[ \mathfrak{R}^2] \leq \Ov{d} \mathfrak{E}^2, 
\ \mbox{where}\ 0 < \underline{d} \leq \Ov{d}, \ \underline{d} = \underline{d} (\underline{a}, \Ov{a}, d). 
\]
Finally, we claim that 
\[
\mathfrak{R}^2 \in L^\infty(0,T; \mathcal{M}^+ (\Ov{\Omega}; R^{d \times d}_{\rm sym})).
\]
To see this, it is enough to observe that 
\[
\Ov{ 1_{\vr > 0} \frac{ \vm \otimes \vm }{\vr} } - 1_{\vr > 0} \frac{\vm \otimes \vm}{\vr} \geq 0.
\]
Indeed we compute 
\[
\left[ \Ov{ 1_{\vr > 0} \frac{ \vm \otimes \vm }{\vr} } - 1_{\vr > 0} \frac{\vm \otimes \vm}{\vr} \right]
:(\xi \otimes \xi) = \left[ \Ov{ \frac{ |\vm \cdot \xi|^2 }{\vr} } - \frac{ |\vm \cdot \xi|^2 }{\vr} \right] \geq 0
\ \mbox{for any}\ \xi \in R^d,
\]
{where the most right inequality follows from convexity of the l.s.c. function
$$
[\vr, \vm] \mapsto \left\{\begin{array}{c}
 \frac{|\vm \cdot \xi|^2}{\vr}\;\mbox{if $\vr>0$},
 \\
 0\;\mbox{if $\vr=0$, $\vm=0$},\\
 \infty\;\mbox{otherwise}.
                   \end{array}\right.
$$
}

Having collected all necessary material, we are now ready to send $n \to \infty$ in both the momentum balance \eqref{E25} 
and the energy inequality \eqref{E26}. In particular, we obtain 
\begin{equation} \label{E30}
\begin{split}
\left[ \intO{ \vr \vu \cdot \bfphi } \right]_{t=0}^{t = \tau} &= 
\int_0^\tau \intO{ \Big[ \vr \vu \cdot \partial_t \bfphi + \vr \vu \otimes \vu : \Grad \bfphi 
+ {p(\vr)} \Div \bfphi - \mathbb{S}  : \Grad \bfphi \Big] }\\ 
&+ \int_0^\tau \left( \int_{\Omega} \Grad \bfphi: \D \mathfrak{R}(t) \right) \dt,\ 
\mathfrak{R} = \mathfrak{R}^1 + \mathfrak{R^2},
\end{split}
\end{equation}
for any test function $\bfphi \in C^1([0,T]; X_n)$, $n$ arbitrary. It is a routine matter to choosed the spaces $X_n$ in such a way that validity of \eqref{E30} can be extended to 
$\bfphi \in C^1([0,T]; C^1_c(\Omega))$ by density argument. Finally, for a function 
\begin{equation} \label{E31}
\bfphi \in C^1([0,T] \times \Ov{\Omega}; R^d),\ \bfphi|_{\partial \Omega} = 0,
\end{equation}
we construct a sequence $\bfphi_n \in C^1([0,T]; C^1_c(\Omega))$ such that 
\[
\begin{split}
&\left\| \bfphi_n \right\|_{W^{1,\infty}(0,T) \times \Omega; R^d)} \leq c 
\ \mbox{uniformly for}\ n \to \infty,\\ 
\bfphi_n (t,x) &\to \bfphi (t,x) ,\ \partial_t \bfphi_n (t,x) \to \partial_t \bfphi (t,x),\ 
\Grad \bfphi_n (t,x) \to \Grad \bfphi (t,x) 
\ \mbox{for \emph{any}}\ (t,x) \in (0,T) \times \Omega. 
\end{split}
\]
In such a way we can extend validity of \eqref{E30} to the class of test function \eqref{E31}. 

We have shown the following \emph{existence result}.

\begin{Theorem} [{\bf Global existence of dissipative solutions}] \label{ET1}

Let $\Omega \subset R^d$, $d=2,3$ be a bounded Lipschitz domain. 
Suppose that $p = p(\vr)$ and $F = F(\mathbb{D})$ satisfy 
\eqref{S2}, \eqref{S3}--\eqref{S5}. Let the data belong to the class 
\[
\vu_B \in C^{1}(R^d; R^d), \ \vr_B \in C^1(R^d),\ \vr_B \geq \underline{\vr} > 0, 
\]
\[
\vr_0 \in L^\gamma (\Omega),\ \vr_0 \geq 0,\ \vm_0 \in L^{\frac{2 \gamma}{\gamma + 1}}(\Omega; R^d), 
\intO{ \left[ \frac{1}{2} \frac{|\vm_0|^2}{\vr_0} + P(\vr_0) \right] } < \infty. 
\]

Then the problem \eqref{P1}--\eqref{P7} admits at least one dissipative solution $[\vr, \vu]$ in $(0,T) \times \Omega$ in the sense specified in Definition \ref{WD1}.

\end{Theorem}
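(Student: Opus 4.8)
The plan is to assemble the limit $n\to\infty$ prepared in the previous subsection and then relax the regularity of the data. First I would approximate the data of the theorem by data satisfying \textbf{(A1)}, \textbf{(A2)}: smooth $\vu_{B,k}\to\vu_B$ in $C^1$ on a neighbourhood of $\Ov{\Omega}$, $\vr_{B,k}\in C^1(\partial\Omega)$ with $\vr_{B,k}\ge\underline{\vr}$, and $\vr_{0,k}\in C^1(R^d)$, $\vr_{0,k}\ge\underline{\vr}_k>0$, $\vv_{0,k}\in X_k$, with $\vr_{0,k}\to\vr_0$ in $L^\gamma(\Omega)$ and convergence of the initial energy $\intO{\left[\tfrac12 |\vm_{0,k}|^2/\vr_{0,k}+P(\vr_{0,k})\right]}\to\intO{\left[\tfrac12|\vm_0|^2/\vr_0+P(\vr_0)\right]}$ (truncation, mollification, and adding a small positive constant to the density at finite level). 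Since the target system \eqref{W1}--\eqref{W6} is precisely the one being constructed and its energy inequality is weakly stable, this outer passage reduces to the same compactness argument carried out below; hence it suffices to produce a dissipative solution for smooth data.

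\textbf{Step 2 (uniform estimates).} For smooth data, Proposition \ref{EP2} provides, for each $n$, a solution $[\vr_n,\vu_n,\mathbb{S}_n]$ together with defects $\mathfrak{R}_n=\mathfrak{R}^1_n+\mathfrak{R}^2_n$, $\mathfrak{E}_n=\mathfrak{E}^1_n+\mathfrak{E}^2_n$ obeying \eqref{E25}, \eqref{E26}, \eqref{E27}. Estimating the right-hand side of \eqref{E26} by the left-hand energy (using $\|\vu_B\|_{C^1}\le c$, Young's inequality, and the coercivity \eqref{S2a}, \eqref{S5a} to absorb $\int F(\Ds\vu_n)+F^*(\mathbb{S}_n)$) and applying Gronwall's lemma, I obtain
\[
\sup_{\tau\in[0,T]}\intO{\left[\tfrac12\vr_n|\vu_n-\vu_B|^2+P(\vr_n)\right]}+\int_0^T\intO{\left[F(\Ds\vu_n)+F^*(\mathbb{S}_n)\right]}\dt+\|\mathfrak{E}_n\|_{L^\infty(0,T;\mathcal{M}^+(\Ov{\Omega}))}\le c
\]
uniformly in $n$, hence also $\|\mathfrak{R}_n\|_{L^\infty(0,T;\mathcal{M}^+)}\le c$ by \eqref{E27}. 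The boundary integral $\int_{\Gamma_{\rm out}}P(\vr_n)\vu_B\cdot\vc{n}\,\D S_x\ge0$ lies on the controlled side and yields the density trace bound in $L^\gamma(\Gamma_{\rm out};|\vu_B\cdot\vc{n}|\D S_x)$, while the inflow term is bounded by the data. Superlinearity of $F^*$ (as $F$ is finite on all of $R^{d\times d}_{\rm sym}$) makes $\{\mathbb{S}_n\}$ equi-integrable in $L^1$, and, arguing as for \eqref{E10} with Korn's and Poincar\'e's inequalities, $\|\vu_n\|_{L^q(0,T;W^{1,q}(\Omega;R^d))}\le c$ for some $q>1$.

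\textbf{Step 3 (passage to the limit).} Passing to subsequences, $\vr_n\to\vr$ in $C_{\rm weak}([0,T];L^\gamma)$ and weakly-(*) on $\Gamma_{\rm out}$, $\vu_n\to\vu$ weakly in $L^q(0,T;W^{1,q})$ with $\vu-\vu_B\in L^q(0,T;W^{1,q}_0)$, $\vm_n\to\vm$ weakly-(*) in $L^\infty(0,T;L^{2\gamma/(\gamma+1)})$, $\mathbb{S}_n\to\mathbb{S}$ weakly in $L^1$, and $\mathfrak{R}^i_n\to\mathfrak{R}^i$, $\mathfrak{E}^i_n\to\mathfrak{E}^i$ weakly-(*) in $L^\infty(0,T;\mathcal{M}(\Ov{\Omega}))$, $i=1,2$. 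Lemma \ref{EWANL1} with $r_n=\vr_n$, $v_n=u^i_n$, $\vc{g}_n=-\vr_n\vu_n$, $h_n=0$ gives $\vm=\vr\vu$ a.e., so the limit lies in the class of Definition \ref{WD1}. The limit of \eqref{E22} is \eqref{W1}--\eqref{W2}; in \eqref{E25} the pressure term $[\Ov{p(\vr)}-p(\vr_n)]_n\Div\bfphi$ converges to the measure term $\int_0^\tau\left(\int_\Omega\Grad\bfphi:\D\,\mathfrak{R}^1(t)\right)\dt$ with $\mathfrak{R}^1\in L^\infty(0,T;\mathcal{M}^+)$ (the weak limit of a convex function dominates its value), while the convective defect $\mathfrak{R}^2$ is positive semi-definite by the convexity and lower semicontinuity of $[\vr,\vm]\mapsto|\vm\cdot\xi|^2/\vr$ recorded above; with $\mathfrak{R}=\mathfrak{R}^1+\mathfrak{R}^2$ and $\mathfrak{E}=\mathfrak{E}^1+\mathfrak{E}^2$ this gives \eqref{W3}, extended to test functions with $\bfphi|_{\partial\Omega}=0$ by the density argument of the excerpt. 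Weak lower semicontinuity of $F$, $F^*$ and of the convex functionals of $[\vr_n,\vm_n]$ turns \eqref{E26} into \eqref{W5}. Adding the compatibility chain \eqref{E27} for $(\mathfrak{E}^1,\mathfrak{R}^1)$ to the analogous bound $\underline{d}\,\mathfrak{E}^2\le{\rm tr}[\mathfrak{R}^2]\le\Ov{d}\,\mathfrak{E}^2$ gives \eqref{W6} with constants depending only on $d$, $\underline{a}$, $\Ov{a}$, which also establishes Remark \ref{CR2}.

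\textbf{Main obstacle.} The one genuine difficulty is the loss of strong compactness of the densities: $p(\vr_n)\not\to p(\vr)$ and $P(\vr_n)\not\to P(\vr)$, which is precisely why the scheme yields a \emph{dissipative}, rather than a weak, solution. One must verify that the weak limits of these differences are genuine \emph{positive} measures with the correct tensorial sign, and that the crucial compatibility \eqref{W6} survives the composed limit $\ep\to0$ then $n\to\infty$; this is guaranteed by the structural hypothesis \eqref{S2} through the estimates \eqref{E25a}, \eqref{E27}. The remaining points — the Gronwall estimate, the boundary terms on $\Gamma_{\rm out}$, and the outer data approximation of Step 1 — are routine once the uniform energy bound is available.
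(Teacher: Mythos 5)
Your proposal follows the paper's construction essentially verbatim: starting from Proposition \ref{EP2}, applying Gronwall's lemma to the energy inequality \eqref{E26}, identifying the pressure defect $\mathfrak{R}^1 \approx [\Ov{p(\vr)}-p(\vr)]\mathbb{I}$ and the convective defect $\mathfrak{R}^2$, using convexity of $[\vr,\vm]\mapsto |\vm\cdot\xi|^2/\vr$ for positive semi-definiteness, invoking Lemma \ref{EWANL1} for $\vm=\vr\vu$, and carrying \eqref{E27} through to \eqref{W6}. The one structural difference is cosmetic: you phrase the data relaxation as a separate outer limit over $k$ after first producing dissipative solutions for smooth data, whereas the paper simply lets the approximate initial datum $\vr_{0,n}\to\vr_0$ and $\vc{v}_{0,n}\in X_n$ vary with $n$ inside the single final limit, avoiding the need for a second compactness argument (and avoiding the need to verify the — true but unproven in the paper — weak closedness of the dissipative solution class that your Step 1 tacitly relies on). Both routes are sound, but the paper's diagonal approach is tighter.
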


\section{Compatibility}
\label{C}

We show that if a dissipative solution enjoys certain regularity, specifically if 
\[
\vu \in C^1([0,T] \times \Ov{\Omega}; R^d),\ \vr \in C^1([0,T] \times \Ov{\Omega}),\ 
\inf_{(0,T) \times \Omega} \vr > 0,
\]
then $[\vr, \vu]$ is a classical solution, meaning $\mathfrak{E} = \mathfrak{R} = 0$. 

To see this, we realize that $(\vu - \vu_B)$ can be used as a test function in the momentum equation 
\eqref{W3}, which, together with the equation of continuity \eqref{W1}, yield the total energy \emph{equality}:
\begin{equation} \label{C1}
\begin{split}
&\left[ \intO{\left[ \frac{1}{2} \vr |\vu - \vu_B|^2 + P(\vr) \right] } \right]_{t = 0}^{ t = \tau} + 
\int_0^\tau \intO{ \mathbb{S}: \Ds \vu } \dt  
+\int_0^\tau \int_{\partial \Omega} P(\vr)  \vu_B \cdot \vc{n} \ \D S_x \dt
\\
&+ \int_{\Ov{\Omega}} 1 \D \ \mathfrak{E} (\tau) \\	
=
&- 
\int_0^\tau \intO{ \left[ \vr \vu \otimes \vu + p(\vr) \mathbb{I} \right]  :  \Grad \vu_B } \dt + \int_0^\tau \intO{ {\vr} \vu  \cdot \vu_B \cdot \Grad \vu_B  } 
\dt + \int_0^\tau \intO{ \mathbb{S} : \Grad \vu_B } \dt \\ &- 
\int_0^\tau \int_{\Ov{\Omega}} \Grad (\vu_B - \vu) : \D \ \mathfrak{R}(t) \dt.  
\end{split}
\end{equation}
Relation \eqref{C1} subtracted from the energy inequality \eqref{W5} give rise to 
\[
\int_{\Ov{\Omega}} 1 \D \ \mathfrak{E} (\tau) \leq \int_0^\tau \int_{\Ov{\Omega}} \Grad \vu : \D \ \mathfrak{R}(t) \dt,
\]
which, together with the compatibility hypothesis \eqref{W6} and Gronwall lemma, yields the desired conclusion 
$\mathfrak{E} = \mathfrak{R} = 0$. 

\begin{Theorem} [{\bf Compatibility weak--strong}] \label{CT1}

Let $\Omega \subset R^d$ be a bounded Lipschitz domain. Suppose that $[\vr, \vu]$ is a dissipative solution in the sense of 
Definition \ref{WD1} belonging to the class 
\[
\vu \in C^1([0,T] \times \Ov{\Omega}; R^d),\ \vr \in C^1([0,T] \times \Ov{\Omega}),\ 
\inf_{(0,T) \times \Omega} \vr > 0.
\]

Then $[\vr, \vu]$ is a classical solution, meaning $\mathfrak{E} = \mathfrak{R} = 0$ and the 
equations are satisfied in the classical sense.

\end{Theorem}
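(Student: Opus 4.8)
The plan is to run a relative-energy comparison \emph{within the solution itself}: confront the energy inequality \eqref{W5} satisfied by the dissipative solution with the energy \emph{equality} that any sufficiently smooth solution of \eqref{W1}, \eqref{W3} must obey, and exploit the defect compatibility \eqref{W6} through Gronwall's lemma. First I would check that, thanks to $\vu\in C^1([0,T]\times\Ov{\Omega};R^d)$ and $\inf\vr>0$, the vector field $\bfphi=\vu-\vu_B$ is an admissible test function in the momentum identity \eqref{W3}: it lies in $C^1([0,T]\times\Ov{\Omega};R^d)$ and, since $\vu-\vu_B\in L^q(0,T;W^{1,q}_0(\Omega;R^d))$ with $\vu$ continuous up to $\partial\Omega$, it vanishes on $\partial\Omega$. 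Inserting $\bfphi=\vu-\vu_B$ in \eqref{W3}, rewriting the time derivatives of $\tfrac12\vr|\vu|^2$ and of $P(\vr)$ by means of the (renormalized) continuity equation \eqref{W1}—legitimate because $\vr\in C^1$ is bounded away from zero, so the renormalizations $B=P$ and $B(\vr)=\tfrac12\vr|\vu|^2$-type are allowed—and sorting out the boundary contributions on $\Gamma_{\rm in}\cup\Gamma_{\rm out}$ (using $\vr=\vr_B$ on $\Gamma_{\rm in}$, the pointwise reading of the boundary condition encoded in \eqref{W1}) produces the total energy identity \eqref{C1}.

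Next I would subtract \eqref{C1} from \eqref{W5}. All contributions assembled from $\vr$, $\vu$, $\vu_B$, $p(\vr)$ and the term $\mathbb{S}:\Grad\vu_B$ cancel, the boundary integrals cancel, and the Fenchel--Young inequality $F(\Ds\vu)+F^*(\mathbb{S})\ge \mathbb{S}:\Ds\vu$ shows the leftover dissipative contribution has a favourable sign, so that
\[
\int_{\Ov{\Omega}} 1\,\D\mathfrak{E}(\tau)\;\le\;\int_0^\tau\!\!\int_{\Ov{\Omega}}\Grad\vu:\D\mathfrak{R}(t)\,\dt .
\]
Since $\mathfrak{R}(t)$ is a positively semi-definite matrix-valued measure, pointwise (with respect to its total variation) one has $\Grad\vu:\D\mathfrak{R}=\Ds\vu:\D\mathfrak{R}\le \|\Ds\vu\|_{L^\infty((0,T)\times\Ov{\Omega})}\,{\rm tr}[\D\mathfrak{R}]$, and \eqref{W6} gives ${\rm tr}[\mathfrak{R}]\le \Ov{d}\,\mathfrak{E}$. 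Hence the Lipschitz function $\tau\mapsto\int_0^\tau\!\int_{\Ov{\Omega}}\D\mathfrak{E}(t)\,\dt$ obeys a linear Gronwall inequality with vanishing datum at $\tau=0$, whence $\int_{\Ov{\Omega}}\D\mathfrak{E}(\tau)=0$ for a.a.\ $\tau$; thus $\mathfrak{E}=0$, invoking \eqref{W6} once more ${\rm tr}[\mathfrak{R}]=0$, and positive semi-definiteness forces $\mathfrak{R}=0$.

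With $\mathfrak{E}=\mathfrak{R}=0$ the difference of \eqref{W5} and \eqref{C1} collapses to $\int_0^\tau\!\int_\Omega\big[F(\Ds\vu)+F^*(\mathbb{S})-\mathbb{S}:\Ds\vu\big]\,\dt\le 0$; together with the pointwise Fenchel--Young inequality this forces $F(\Ds\vu)+F^*(\mathbb{S})=\mathbb{S}:\Ds\vu$ a.e., i.e.\ $\mathbb{S}\in\partial F(\Ds\vu)$, the rheological law \eqref{P6}. Once $\mathfrak{R}=0$, testing \eqref{W3} with arbitrary $\bfphi\in C^1_c((0,T)\times\Omega;R^d)$ and integrating by parts (licit by the $C^1$ regularity) yields \eqref{P2} pointwise in $(0,T)\times\Omega$, while \eqref{W1} gives \eqref{P1} there and, after one more integration by parts against general $\varphi\in C^1([0,T]\times\Ov{\Omega})$, the boundary conditions \eqref{P5} and the initial data \eqref{P7}; hence $[\vr,\vu]$ solves \eqref{P1}--\eqref{P7} classically. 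The step I expect to be the real obstacle is the rigorous derivation of the energy identity \eqref{C1}: every individual manipulation is classical once $\vr,\vu\in C^1$ and $\inf\vr>0$, but one must be careful with the precise accounting of the inflow/outflow boundary terms and with the legitimacy of using $\vu-\vu_B$ as a test function and of the quadratic renormalization of \eqref{W1}. Everything after \eqref{C1}—the Fenchel--Young sign, the trace estimate on $\mathfrak{R}$ via \eqref{W6}, and the Gronwall argument—is routine.
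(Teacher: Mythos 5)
Your proposal is correct and follows essentially the same route as the paper: test the momentum identity \eqref{W3} with $\bfphi=\vu-\vu_B$, combine with \eqref{W1} to obtain the energy identity \eqref{C1}, subtract it from \eqref{W5}, use Fenchel--Young to drop the dissipation terms, and close with the trace bound ${\rm tr}[\mathfrak{R}]\le\Ov{d}\,\mathfrak{E}$ from \eqref{W6} and Gronwall. You spell out a few steps the paper leaves implicit (the pointwise estimate $\Grad\vu:\D\mathfrak{R}\le\|\Ds\vu\|_{L^\infty}{\rm tr}[\D\mathfrak{R}]$ and the recovery of $\mathbb{S}\in\partial F(\Ds\vu)$ once the defects vanish), but the argument is the same.
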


\section{Relative energy}
\label{RR}

The relative energy is a basic tool for showing the weak--strong uniqueness property. Let us introduce
\[
\mathcal{E} \left(\vr, \vu \ \Big|\ \tvr, \tvu \right) = \frac{1}{2} \vr |\vu - \tvu|^2 + P(\vr) - 
P'(\tvr) (\vr - \tvr) - P(\tvr) 
\]
that can be rewritten as   
\[
\begin{split}
\frac{1}{2} \vr |\vu - \tvu|^2 &+ P(\vr) - 
P'(\tvr) (\vr - \tvr) - P(\tvr) \\
&= \frac{1}{2} \vr |\vu - \vu_B - (\tvu - \vu_B)|^2  + P(\vr) - 
P'(\tvr) (\vr - \tvr) - P(\tvr) 
\\&= \left[ \frac{1}{2} \vr |\vu - \vu_B|^2 + P(\vr) \right] - \vr \vu \cdot (\tvu - \vu_B) 
 + \left[ \frac{1}{2} \Big( |\tvu|^2 - |\vu_B|^2 \Big) -  P'(\tvr) \right] \vr + 
p(\tvr)
\end{split}
\]
Our goal is to evaluate the time evolution of 
\[
\intO{ \mathcal{E} \left(\vr, \vu \ \Big|\ \tvr, \tvu \right) }
\]
where $[\vr, \vu]$ is a dissipative solutions and $[\tvr, \tvu]$ are test functions in the class 
\[
\tvu \in C^1([0,T] \times \Ov{\Omega}; R^d),\ 
\tvu|_{\partial \Omega} = \vu_B|_{\partial \Omega},\ 
\tvr \in C^1([0, T] \times \Ov{\Omega}), \ \inf_{(0,T) \times \Omega} \tvr > 0.
\]

\medskip

{\bf Step 1:}

In accordance with the energy inequality \eqref{W5}, we get 
\begin{equation} \label{RR1}
\begin{split}
&\left[ \intO{\left[ \frac{1}{2} \vr |\vu - \vu_B|^2 + P(\vr) \right] } \right]_{t = 0}^{ t = \tau} + 
\int_0^\tau \intO{ \Big[ F(\Ds \vu) + F^* (\mathbb{S}) \Big] } \dt  
\\
& { +\int_0^\tau \int_{\Gamma_{\rm out}} P(\vr)  \vu_B \cdot \vc{n} \ \D S_x \dt +\int_0^\tau \int_{\Gamma_{\rm in}} P(\vr_B)  \vu_B \cdot \vc{n} \ \D S_x \dt}
+ \int_{\Ov{\Omega}} 1 \D \ \mathfrak{E} (\tau) \\	
\leq
&- 
\int_0^\tau \intO{ p(\vr)  \Div \vu_B } \dt + \int_0^\tau \intO{ {\vr} (\vu_B - \vu ) \cdot \Big(\vu \cdot \Grad \vu_B  \Big) } 
\dt + \int_0^\tau \intO{ \mathbb{S} : \Grad \vu_B } \dt \\ &- 
\int_0^\tau \int_{\Ov{\Omega}} \Grad \vu_B : \D \ \mathfrak{R}(t) \dt 
\end{split}
\end{equation}

\medskip

{\bf Step 2:}

Plugging $\bfphi = \tvu - \vu_B$ in the momentum equation \eqref{W3}, we get
\begin{equation} \label{RR2}
\begin{split}
&\left[ \intO{ \vr \vu \cdot (\tvu - \vu_B) } \right]_{t=0}^{t = \tau} \\ &= 
\int_0^\tau \intO{ \Big[ \vr \vu \cdot \partial_t \tvu  + \vr \vu \otimes \vu : \Grad (\tvu - \vu_B) 
+ p(\vr) \Div (\tvu - \vu_B) - \mathbb{S} : \Grad (\tvu - \vu_B) \Big] } \dt \\
&+ \int_0^\tau \int_{\Ov{\Omega}} \Grad (\tvu - \vu_B) : \D \ \mathfrak{R}(t) \ \dt
\end{split}
\end{equation}

\medskip

{\bf Step 3:}

Finally, we consider $\varphi = \left[ \frac{1}{2} \Big( |\tvu|^2 - |\vu_B|^2 \Big) -  P'(\tvr) \right]$ in 
the equation of continuity \eqref{W1} obtaining: 
\begin{equation} \label{RR3}
\begin{split}
&\left[ \intO{ \vr \left[ \frac{1}{2} \Big( |\tvu|^2 - |\vu_B|^2 \Big) -  P'(\tvr) \right] } \right]_{t = 0}^{t = \tau}  \\&{ -
\int_0^\tau \int_{\Gamma_{\rm out}}P'(\tilde\vr)\vr \vu_B \cdot \vc{n} \ \D \ S_x 
-
\int_0^\tau \int_{\Gamma_{\rm in}}P'(\tilde\vr)\vr_B \vu_B \cdot \vc{n} \ \D \ S_x 
}
\\ &= 
\int_0^\tau \intO{ \Big[ \vr \partial_t\left( \frac{1}{2} |\tvu|^2 -  P'(\tvr) \right)  + 
\vr \vu \cdot \Grad \left( \frac{1}{2} \Big( |\tvu|^2 - |\vu_B|^2 \Big) -  P'(\tvr) \right) \Big] } \dt 
\end{split}
\end{equation}

Summing up \eqref{RR1}--\eqref{RR3} we get 
\begin{equation} \label{RR4}
\begin{split}
&\left[ \intO{\mathcal{E}\left( \vr, \vu \ \Big|\ \tvr, \tvu \right) } \right]_{t = 0}^{ t = \tau} + 
\int_0^\tau \intO{ \Big[ F(\Ds \vu) + F^* (\mathbb{S}) \Big] } \dt - \int_0^\tau \intO{ \mathbb{S} : \Grad \tvu } \dt \\  
&{+\int_0^\tau \int_{\Gamma_{\rm out}} \left[ P(\vr) - P'(\tvr) \vr \right]  \vu_B \cdot \vc{n} \ \D S_x \dt
+\int_0^\tau \int_{\Gamma_{\rm in}} \left[ P(\vr_B) - P'(\tvr) \vr_B \right]  \vu_B \cdot \vc{n} \ \D S_x \dt}
\\
&+ \int_{\Ov{\Omega}} 1 \D \ \mathfrak{E} (\tau) \\	
\leq 
&- \int_0^\tau \intO{ \Big[ \vr \vu \cdot \partial_t \tvu  + \vr \vu+
\vu_B \cdot \vc{n} \ \D S_x \dt \otimes \vu : \Grad \tvu 
+ p(\vr) \Div \tvu  \Big] } \dt \\
&- \int_0^\tau \int_{\Ov{\Omega}} \Grad \tvu : \D \ \mathfrak{R}(t) \ \dt  + \int_0^\tau \intO{ \partial_t p(\tvr) } \dt\\
&+\int_0^\tau \intO{ \Big[ \vr \partial_t\left( \frac{1}{2} |\tvu|^2 -  P'(\tvr) \right)  + 
\vr \vu \cdot \Grad \left( \frac{1}{2} |\tvu|^2  -  P'(\tvr) \right) \Big] } \dt 
\end{split}
\end{equation}

Finally, regrouping several terms we conclude
\begin{equation} \label{RR5}
\begin{split}
&\left[ \intO{\mathcal{E}\left( \vr, \vu \ \Big|\ \tvr, \tvu \right) } \right]_{t = 0}^{ t = \tau} + 
\int_0^\tau \intO{ \Big[ F(\Ds \vu) + F^* (\mathbb{S}) \Big] } \dt - \int_0^\tau \intO{ \mathbb{S} : \Grad \tvu } \dt \\ 
&{ +\int_0^\tau \int_{\Gamma_{\rm out}} \left[ P(\vr) - P'(\tvr) (\vr - \tvr) - P(\tvr)  \right]  \vu_B \cdot \vc{n} \ \D S_x \dt}\\
&{ +\int_0^\tau \int_{\Gamma_{\rm in}} \left[ P(\vr_B) - P'(\tvr) (\vr_B - \tvr) - P(\tvr)  \right]  \vu_B \cdot \vc{n} \ \D S_x \dt}
+ \int_{\Ov{\Omega}} 1 \D \ \mathfrak{E} (\tau) \\	
\leq &- \int_0^\tau \intO{ \vr (\tvu - \vu) \cdot (\tvu - \vu) \cdot \Grad \tvu } \dt\\
&- 
\int_0^\tau \intO{ \Big[ p(\vr) - p'(\tvr) (\vr - \tvr) - p(\tvr) \Big] \Div \tvu } \dt   
\\ 
&+ \int_0^\tau \intO{ \frac{\vr}{\tvr} (\tvu - \vu) \cdot \Big[ \partial_t (\tvr \tvu)  +  \Div (\tvr \tvu \otimes \tvu) 
  + \Grad p(\tvr) \Big] } \dt\\ &+ \int_0^\tau \intO{ \left( \frac{\vr}{\tvr} (\vu - \tvu) \cdot \tvu +
	p'(\tvr)\left( 1 - \frac{\vr}{\tvr} \right) \right) \Big[ \partial_t \tvr   +  \Div (\tvr \tvu) 
  \Big] } \dt
	\\
&- \int_0^\tau \int_{\Ov{\Omega}} \Grad \tvu : \D \ \mathfrak{R}(t) \ \dt.  
\end{split}
\end{equation}

We have shown the following result. 

\begin{Proposition}[{\bf Relative energy inequality}] \label{RRP1} 

Let $[\vr, \vu]$ be a dissipative solution in the sense of Definition \ref{WD1}. Suppose that 
\begin{equation} \label{RR6}
\tvu \in C^1([0,T] \times \Ov{\Omega}; R^d),\ 
\tvu|_{\partial \Omega} = \vu_B|_{\partial \Omega},\ 
\tvr \in C^1([0, T] \times \Ov{\Omega}), \ \inf_{(0,T) \times \Omega} \tvr > 0.
\end{equation}

Then the relative energy inequality \eqref{RR5} holds for a.a. $0 \leq \tau \leq T$.

\end{Proposition}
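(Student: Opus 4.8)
The plan is to derive \eqref{RR5} by carefully combining the three identities \eqref{RR1}, \eqref{RR2}, and \eqref{RR3}, which are themselves obtained by inserting admissible test functions into the energy inequality \eqref{W5}, the momentum balance \eqref{W3}, and the continuity equation \eqref{W1} respectively. First I would verify that the chosen test functions are admissible in the sense required: since $\tvu$ and $\vu_B$ belong to $C^1([0,T]\times\Ov{\Omega};R^d)$ and agree on $\partial\Omega$, the function $\bfphi=\tvu-\vu_B$ vanishes on $\partial\Omega$ and hence is legitimate in \eqref{W3}; likewise $\varphi=\tfrac12(|\tvu|^2-|\vu_B|^2)-P'(\tvr)$ lies in $C^1([0,T]\times\Ov{\Omega})$ because $\tvr$ is $C^1$ and bounded away from zero, so it is legitimate in \eqref{W1}. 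I would also record the algebraic identity displayed just before Step~1, which expresses $\intO{\mathcal{E}(\vr,\vu\,|\,\tvr,\tvu)}$ as the sum $\intO{[\tfrac12\vr|\vu-\vu_B|^2+P(\vr)]}-\intO{\vr\vu\cdot(\tvu-\vu_B)}+\intO{[\tfrac12(|\tvu|^2-|\vu_B|^2)-P'(\tvr)]\vr}+\intO{p(\tvr)}$; this is precisely why summing the left-hand sides of \eqref{RR1}--\eqref{RR3} reproduces the relative energy up to the term $\int_0^\tau\intO{\partial_t p(\tvr)}$, which accounts for the time derivative of the last, purely test-function, term.

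Next I would carry out the bookkeeping that passes from \eqref{RR4} to \eqref{RR5}. The key is to recognize the ``modulated'' or relative structure in the remaining terms. On the pressure side, one uses $P'(\tvr)\vr-P(\vr)-p(\tvr)$ together with $\partial_t p(\tvr)=p'(\tvr)\partial_t\tvr$ and the continuity equation for $\tvr$ to assemble the relative pressure $p(\vr)-p'(\tvr)(\vr-\tvr)-p(\tvr)$ paired with $\Div\tvu$, plus the correction term $p'(\tvr)(1-\vr/\tvr)[\partial_t\tvr+\Div(\tvr\tvu)]$. On the convective/kinetic side, one expands $\vr\vu\otimes\vu:\Grad\tvu$, the terms coming from $\partial_t\tvu$ and from $\Grad(\tfrac12|\tvu|^2)$, and completes the square to produce $-\vr(\tvu-\vu)\cdot(\tvu-\vu)\cdot\Grad\tvu$ plus the term $\tfrac{\vr}{\tvr}(\tvu-\vu)\cdot[\partial_t(\tvr\tvu)+\Div(\tvr\tvu\otimes\tvu)+\Grad p(\tvr)]$ and the mixed correction $\tfrac{\vr}{\tvr}(\vu-\tvu)\cdot\tvu\,[\partial_t\tvr+\Div(\tvr\tvu)]$. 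The boundary terms over $\Gamma_{\rm in}$ and $\Gamma_{\rm out}$ are reorganized using the boundary values $\vr|_{\Gamma_{\rm in}}=\vr_B$ and $\tvu|_{\partial\Omega}=\vu_B$ to display the relative pressure potential $P(\cdot)-P'(\tvr)(\cdot-\tvr)-P(\tvr)$ evaluated at $\vr$ on $\Gamma_{\rm out}$ and at $\vr_B$ on $\Gamma_{\rm in}$. Finally, the viscous stress term $-\int\mathbb{S}:\Grad(\tvu-\vu_B)$ from \eqref{RR2} combines with the $\int\mathbb{S}:\Grad\vu_B$ term from \eqref{RR1} to leave exactly $-\int\mathbb{S}:\Grad\tvu$ on the left, and the three Reynolds-stress contributions combine to $-\int_{\Ov{\Omega}}\Grad\tvu:\D\mathfrak{R}$.

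The main obstacle I anticipate is the careful tracking of the numerous $\vu_B$-dependent terms, since the boundary velocity appears both inside the energy inequality \eqref{W5} (which is written relative to $\vu-\vu_B$) and inside the test functions for \eqref{W3} and \eqref{W1}; a small sign error or a misattributed $\vu_B$ term would break the clean relative-energy form. A secondary technical point is justifying the use of $\varphi$ and $\bfphi$ as test functions over the whole interval $[0,\tau]$ rather than against time-compactly-supported test functions, which requires the weak continuity in time of $\vr$ and $\vr\vu$ guaranteed by Definition~\ref{WD1} together with a routine density/approximation argument; this is standard and I would only remark on it. Once the algebra is organized, no analytical difficulty remains and the identity \eqref{RR5} follows, which is the assertion of Proposition~\ref{RRP1}.
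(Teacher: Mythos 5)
Your proposal reproduces the paper's own argument step for step: you use the same algebraic decomposition of the relative energy displayed just before Step 1, insert the same test functions $\bfphi=\tvu-\vu_B$ into the momentum balance and $\varphi=\tfrac12(|\tvu|^2-|\vu_B|^2)-P'(\tvr)$ into the continuity equation, combine with the energy inequality, and regroup to obtain the modulated pressure/convective/boundary terms and the Reynolds-stress term as in \eqref{RR5}. This is exactly how Proposition \ref{RRP1} is proved in Section \ref{RR}, so the proposal is correct and takes essentially the same route.
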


\section{Weak--strong uniqueness}
\label{WU}

Our goal is to show that a dissipative solutions coincides with the strong solution emanating from the same initial data { and boundary conditions}.
Assuming the strong solution $[\tvr, \tvu]$ belongs to the class \eqref{RR6}, the obvious idea is to use the relative energy 
inequality \eqref{RR5}. Assuming regularity of the viscous stress $\tilde{\mathbb{S}}$ related to the strong solution, we may rewrite 
\eqref{RR5} as  
\begin{equation} \label{WU1}
\begin{split}
&\left[ \intO{\mathcal{E}\left( \vr, \vu \ \Big|\ \tvr, \tvu \right) } \right]_{t = 0}^{ t = \tau} \\&+ 
\int_0^\tau \intO{ \Big[ F(\Ds \vu) + F^* (\mathbb{S}) \Big] } \dt 
+ \int_0^\tau \intO{ \widetilde{\mathbb{S}}: ( \Ds \tvu - \Ds \vu) } \dt 
- \int_0^\tau \intO{ \mathbb{S} : \Ds \tvu } \dt \\  
&+\int_0^\tau \int_{\Gamma_{\rm out}} \left[ P(\vr) - P'(\tvr) (\vr - \tvr) - P(\tvr)  \right]  \vu_B \cdot \vc{n} \ \D S_x \dt
+ \int_{\Ov{\Omega}} 1 \D \ \mathfrak{E} (\tau) \\	
\leq &- \int_0^\tau \intO{ \vr (\tvu - \vu) \cdot (\tvu - \vu) \cdot \Grad \tvu } \dt\\
&- 
\int_0^\tau \intO{ \Big[ p(\vr) - p'(\tvr) (\vr - \tvr) - p(\tvr) \Big] \Div \tvu } \dt   
\\ 
&+ \int_0^\tau \intO{ \frac{\vr}{\tvr} (\tvu - \vu) \cdot \Big[ \partial_t (\tvr \tvu)  +  \Div (\tvr \tvu \otimes \tvu) 
  + \Grad p(\tvr) - \Div \widetilde{\mathbb{S}} \Big] } \dt\\ &+ \int_0^\tau \intO{ \left( \frac{\vr}{\tvr} (\vu - \tvu) \cdot \tvu +
	p'(\tvr)\left( 1 - \frac{\vr}{\tvr} \right) \right) \Big[ \partial_t \tvr   +  \Div (\tvr \tvu) 
  \Big] } \dt
	\\
&+ \int_0^\tau \intO{ \left( \frac{\vr}{\tvr} - 1 \right) (\tvu - \vu) \cdot \Div \widetilde{\mathbb{S}} } \dt
\\	
&- \int_0^\tau \int_{\Ov{\Omega}} \Grad \tvu : \D \ \mathfrak{R}(t) \ \dt, 
\end{split}
\end{equation}
where we have used the identity 
\[
\int_0^\tau \intO{  (\tvu - \vu) \cdot \Div \widetilde{\mathbb{S}} } \dt = 
- \int_0^\tau \intO{  \Ds (\tvu - \vu) : \widetilde{\mathbb{S}} } \dt.
\]

As $[\tvr, \tvu]$ is a strong solution of the problem with the same initial--boundary data, relation \eqref{WU1} reduces
to 
\begin{equation} \label{WU2}
\begin{split}
&\intO{\mathcal{E}\left( \vr, \vu \ \Big|\ \tvr, \tvu \right) (\tau, \cdot) } \\&+ 
\int_0^\tau \intO{ \Big[ F(\Ds \vu) + F^* (\mathbb{S}) \Big] } \dt 
+ \int_0^\tau \intO{ \widetilde{\mathbb{S}}: ( \Ds \tvu - \Ds \vu) } \dt 
- \int_0^\tau \intO{ \mathbb{S} : \Ds \tvu } \dt \\  
&+\int_0^\tau \int_{\Gamma_{out}} \left[ P(\vr) - P'(\tvr) (\vr - \tvr) - P(\tvr)  \right]  \vu_B \cdot \vc{n} \ \D S_x \dt
+ \int_{\Ov{\Omega}} 1 \D \ \mathfrak{E} (\tau) \\	
\leq &- \int_0^\tau \intO{ \vr (\tvu - \vu) \cdot (\tvu - \vu) \cdot \Grad \tvu } \dt\\
&- 
\int_0^\tau \intO{ \Big[ p(\vr) - p'(\tvr) (\vr - \tvr) - p(\tvr) \Big] \Div \tvu } \dt   
\\ 
&+ \int_0^\tau \intO{ \left( \frac{\vr}{\tvr} - 1 \right) (\tvu - \vu) \cdot \Div \widetilde{\mathbb{S}} } \dt
\\	
&- \int_0^\tau \int_{\Ov{\Omega}} \Grad \tvu : \D \ \mathfrak{R}(t) \ \dt. 
\end{split}
\end{equation}

Moreover, it follows from the structural hypothesis \eqref{S2} and the compatibility condition \eqref{W6} that 
\[
\begin{split}
&- \int_0^\tau \intO{ \vr (\tvu - \vu) \cdot (\tvu - \vu) \cdot \Grad \tvu } \dt\\
&- 
\int_0^\tau \intO{ \Big[ p(\vr) - p'(\tvr) (\vr - \tvr) - p(\tvr) \Big] \Div \tvu } \dt   
\\ 
&- \int_0^\tau \int_{\Ov{\Omega}} \Grad \tvu : \D \ \mathfrak{R}(t) \ \dt \\
&\leq c\left(\| \Grad \tvu \|_{L^\infty} \right) \left[  
\int_0^\tau \intO{ \mathcal{E}\left(\vr, \vu \ \Big|\ \tvr, \tvu \right) } \dt + 
\int_0^\tau \left(\int_{\Ov{\Omega}} \D \ \mathfrak{E} (t) \right) \dt     \right].
\end{split}
\]
Consequently, \eqref{WU2} gives rise to 
\begin{equation} \label{WU2a}
\begin{split}
&\intO{\mathcal{E}\left( \vr, \vu \ \Big|\ \tvr, \tvu \right) (\tau, \cdot) } \\&+ 
\int_0^\tau \intO{ \Big[ F(\Ds \vu) + F^* (\mathbb{S}) \Big] } \dt 
+ \int_0^\tau \intO{ \widetilde{\mathbb{S}}: ( \Ds \tvu - \Ds \vu) } \dt 
- \int_0^\tau \intO{ \mathbb{S} : \Ds \tvu } \dt \\  
&+\int_0^\tau \int_{\Gamma_{out}} \left[ P(\vr) - P'(\tvr) (\vr - \tvr) - P(\tvr)  \right]  \vu_B \cdot \vc{n} \ \D S_x \dt
+ \int_{\Ov{\Omega}} 1 \D \ \mathfrak{E} (\tau) \\	
&\leq c\left(\| \Grad \tvu \|_{L^\infty} \right) \left[  
\int_0^\tau \intO{ \mathcal{E}\left(\vr, \vu \ \Big|\ \tvr, \tvu \right) } \dt + 
\int_0^\tau \left(\int_{\Ov{\Omega}} \D \ \mathfrak{E} (t) \right) \dt     \right]
\\ 
&+ \int_0^\tau \intO{ \left( \frac{\vr}{\tvr} - 1 \right) (\tvu - \vu) \cdot \Div \widetilde{\mathbb{S}} } \dt.
\end{split}
\end{equation}

To conclude, we regroup the dissipative integrals on the left hand side as 
\[
\begin{split}
\int_0^\tau \intO{ \Big[ F(\Ds \vu) + F^* (\mathbb{S}) \Big] } \dt 
&+ \int_0^\tau \intO{ \widetilde{\mathbb{S}}: ( \Ds \tvu - \Ds \vu) } \dt 
- \int_0^\tau \intO{ \mathbb{S} : \Ds \tvu } \dt\\
&= \int_0^\tau \intO{ \left[ F(\Ds \vu) - \widetilde{\mathbb{S}}: ( \Ds \vu - \Ds \tvu) - F(\Ds \tvu) \right] } \dt\\
&+ \int_0^\tau \intO{ \left[ F(\Ds \tvu) + F^*(\mathbb{S}) - \Ds \tvu : \mathbb{S} \right] } \dt \\
&\geq \int_0^\tau \intO{ \left[ F(\Ds \vu) - \widetilde{\mathbb{S}}: ( \Ds \vu - \Ds \tvu) - F(\Ds \tvu) \right] } \dt,
\end{split}
\]
where we have used Fenchel--Young inequality. By virtue of the coercivity hypothesis \eqref{S5}, we have 
\begin{equation} \label{WU4}
\begin{split}
\int_0^\tau \intO{ \left[ F(\Ds \vu) - \widetilde{\mathbb{S}}: ( \Ds \vu - \Ds \tvu) - F(\Ds \tvu) \right] } \dt\\
\geq \int_0^\tau \intO{ A_R \left( \left| \Ds (\vu - \tvu) - \frac{1}{d} \Div (\vu - \tvu) \mathbb{I} \right| \right)} \dt,
\end{split}
\end{equation}
where $R = R \left(\| \tvu \|_{L^\infty}, \left\| \widetilde{\mathbb{S}} \right\|_{L^\infty} \right)$.

Finally, we use the following two results concerning Korn and Poincar\' e inequalities:  

\begin{Lemma}[{\bf Talenti \cite[Lemma 3]{Tale}}] \label{WUL1}

Let $A$ be a Young function, and let $\Omega \subset R^d$ be a bounded domain. 

Then
\[
\intO{ A \left( \frac{\chi_d}{d |\Omega|^{\frac{1}{d}}} \left| w \right| \right) }
\leq \frac{1}{d} \intO{ A \left( \left| \Grad w \right| \right) }
\] 
for any $w \in W^{1,1}_0(\Omega)$, where $\chi_d$ is a positive constant.

\end{Lemma}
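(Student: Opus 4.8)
The plan is to reduce the estimate to a one–dimensional radial inequality by symmetrization and then to close it with Jensen's inequality and Fubini's theorem. Write $\omega_d=|B_1|$ for the volume of the unit ball in $R^d$, let $B\subset R^d$ be the ball centred at the origin with $|B|=|\Omega|$ (hence of radius $\rho=(|\Omega|/\omega_d)^{1/d}$), and let $w^*$ denote the symmetric decreasing rearrangement of $w$ on $B$. I claim the inequality holds with $\chi_d=d\,\omega_d^{1/d}$, so that $\chi_d/(d|\Omega|^{1/d})=1/\rho$; equivalently I must prove
\[
\int_B A\!\left(\rho^{-1}|w^*|\right)\dd x\le\frac1d\int_B A(|\Grad w^*|)\,\dd x,
\]
since $w$ and $w^*$ are equimeasurable (so $\intO{A(\rho^{-1}|w|)}=\int_B A(\rho^{-1}|w^*|)\,\dd x$) and, $A$ being a Young function, the Pólya–Szegő principle in Orlicz spaces gives $\int_B A(|\Grad w^*|)\,\dd x\le\intO{A(|\Grad w|)}$.

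For the radial inequality I pass to the ``volume variable''. Writing $w^*(x)=u(\omega_d|x|^d)$ with $u:[0,|\Omega|]\to[0,\infty)$ nonincreasing, absolutely continuous and $u(|\Omega|)=0$, the substitution $v=\omega_d|x|^d$ yields, with $h(v):=d\,\omega_d^{1/d}\,v^{(d-1)/d}\,|u'(v)|$,
\[
\int_B A(|\Grad w^*|)\,\dd x=\int_0^{|\Omega|}A(h(v))\,\dd v,\qquad\int_B A\!\left(\tfrac1\rho|w^*|\right)\dd x=\int_0^{|\Omega|}A\!\left(\tfrac1\rho u(v)\right)\dd v,
\]
together with $u(v)=(d\,\omega_d^{1/d})^{-1}\int_v^{|\Omega|}h(\sigma)\,\sigma^{-(d-1)/d}\,\dd\sigma$. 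The measure $\dd\nu_v=\mathbf{1}_{[v,|\Omega|]}(\sigma)\,\sigma^{-(d-1)/d}\,\dd\sigma$ has total mass $m(v)=d\bigl(|\Omega|^{1/d}-v^{1/d}\bigr)\le d|\Omega|^{1/d}=\rho\,d\,\omega_d^{1/d}$, whence $m(v)/(\rho\,d\,\omega_d^{1/d})\le1$; convexity of $A$ with $A(0)=0$ and Jensen's inequality then give
\[
A\!\left(\tfrac1\rho u(v)\right)=A\!\left(\tfrac{m(v)}{\rho\,d\,\omega_d^{1/d}}\cdot\tfrac{1}{m(v)}\!\int h\,\dd\nu_v\right)\le\frac{1}{\rho\,d\,\omega_d^{1/d}}\int_v^{|\Omega|}A(h(\sigma))\,\sigma^{-(d-1)/d}\,\dd\sigma.
\]
Integrating in $v$ over $(0,|\Omega|)$, exchanging the order of integration (Fubini), and using $\sigma\le|\Omega|$ together with $|\Omega|^{1/d}=\rho\,\omega_d^{1/d}$ gives
\[
\int_0^{|\Omega|}A\!\left(\tfrac1\rho u(v)\right)\dd v\le\frac{1}{\rho\,d\,\omega_d^{1/d}}\int_0^{|\Omega|}A(h(\sigma))\,\sigma^{1/d}\,\dd\sigma\le\frac{|\Omega|^{1/d}}{\rho\,d\,\omega_d^{1/d}}\int_0^{|\Omega|}A(h(\sigma))\,\dd\sigma=\frac1d\int_0^{|\Omega|}A(h(\sigma))\,\dd\sigma,
\]
which is the sought inequality on $B$; combined with the first paragraph it proves the lemma.

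I expect the main obstacle to be the one genuinely non-elementary ingredient, namely the Orlicz version of the Pólya–Szegő inequality $\int_B A(|\Grad w^*|)\,\dd x\le\intO{A(|\Grad w|)}$, valid for an \emph{arbitrary} Young function $A$ without any growth restriction; it rests on the coarea formula and the isoperimetric inequality and is classical (this is essentially the content of Talenti \cite{Tale}), but it has to be invoked rather than reproved here. The remaining technical points are routine: the absolute continuity of $u$ (a consequence of $w^*\in W^{1,1}(B)$) and the treatment of level sets of $w$ of positive measure, on which $u'=0$ so that they do not contribute to the right–hand side.
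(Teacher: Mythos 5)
The paper does not prove this lemma but cites it directly from Talenti \cite{Tale}; your reconstruction follows the same symmetrization route as the cited source: equimeasurability and the Orlicz-space P\'olya--Szeg\H{o} principle reduce the statement to a radial inequality on a ball of equal volume, which then closes via Jensen's inequality and Fubini after passing to the volume variable. The computations check out --- in particular the identification $\chi_d=d\,\omega_d^{1/d}$ so that $\chi_d/(d|\Omega|^{1/d})=\rho^{-1}$, the bound $m(v)\le d|\Omega|^{1/d}=\rho\, d\,\omega_d^{1/d}$ justifying the use of $A(\lambda\mu)\le\lambda A(\mu)$, and the Fubini step producing the factor $\sigma^{1/d}\le|\Omega|^{1/d}$ --- and you correctly flag the P\'olya--Szeg\H{o} inequality as the one genuinely non-elementary ingredient that must be invoked rather than reproved.
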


\begin{Lemma}[{\bf Breit, Cianchi, and Diening \cite[Theorem 3.1]{BrCiDi}}] \label{WUL2}

Let $\Omega \subset R^d$, $d \geq 2$ be a bounded domain. Let $A$ be a Young function satisfying the $\Delta^2_2$--condition 
\eqref{S4}.

Then there exists a constant $c > 0$ such that 
\[
\intO{ A ( |\Grad \vc{w}| ) } \leq \intO{ A \left( c \left| \Ds \vc{w} - \frac{1}{d} \Div \vc{w} \mathbb{I} \right| \right) }
\]
for any $\vc{w} \in W^{1,1}_0(\Omega; R^d)$.

\end{Lemma}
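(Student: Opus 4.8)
Lemma~\ref{WUL2} is the Orlicz-space form of the deviatoric (``second'') Korn inequality for maps vanishing on the boundary. The plan is to prove it by representing the full gradient as a Calder\'on--Zygmund transform of the traceless symmetric gradient, using the boundedness of such transforms on $L^A$, and then passing from the resulting \emph{norm} inequality to the stated \emph{modular} one.

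First I would extend $\vc{w}\in W^{1,1}_0(\Omega;R^d)$ by zero; the extension lies in $W^{1,1}(R^d;R^d)$ with support in $\Ov{\Omega}$, and $\Div\vc{w}$, $\Ds\vc{w}$ are the zero extensions of the corresponding quantities, so it is enough to prove the estimate on $R^d$. Write $\Ds^\circ\vc{w}:=\Ds\vc{w}-\frac1d\Div\vc{w}\,\mathbb{I}$. On the Fourier side the operator $\vc{w}\mapsto\Ds^\circ\vc{w}$ has symbol $\vc{v}\mapsto\frac12(\xi\otimes\vc{v}+\vc{v}\otimes\xi)-\frac1d(\xi\cdot\vc{v})\mathbb{I}$, and a one-line computation (take $\xi$ along a coordinate axis, where the diagonal entry carries the factor $1-\frac1d\ge\frac12$) shows this map is \emph{injective} for every $\xi\ne0$ as soon as $d\ge2$. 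Hence $\widehat{\Grad\vc{w}}(\xi)=M(\xi)\,\widehat{\Ds^\circ\vc{w}}(\xi)$ for a matrix $M$ positively homogeneous of degree $0$ and smooth on $S^{d-1}$; equivalently $\Grad\vc{w}=T[\Ds^\circ\vc{w}]$ for a matrix $T$ of classical Calder\'on--Zygmund operators. (Since $\Ds^\circ\vc{w}$ and $\Grad\vc{w}$ are compactly supported, hence in $L^1(R^d)$, the multiplier identity --- a priori valid in $\mathcal S'$ --- indeed forces $T[\Ds^\circ\vc{w}]=\Grad\vc{w}$ a.e.)

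Now \eqref{S4} with $a_1>2$ says precisely that $A$ obeys both $\Delta_2$ (the upper bound) and $\nabla_2$ (the lower bound), i.e.\ $A$ and its complementary function are both doubling, equivalently the Matuszewska--Orlicz indices of $A$ lie strictly between $1$ and $\infty$. This is exactly the range in which classical Calder\'on--Zygmund operators are bounded on $L^A(R^d)$ --- a fact one may invoke directly, or obtain from a good-$\lambda$ inequality together with the boundedness of the Hardy--Littlewood maximal operator on $L^A$, or by extrapolation from the $L^p$ bounds. Applied to $T$ it yields $\|\Grad\vc{w}\|_{L^A(R^d)}\le K\|\Ds^\circ\vc{w}\|_{L^A(R^d)}$ with $K=K(d,a_1,a_2)$. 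To pass to the modular inequality one normalises (using $\Delta_2$) so that $\int_{R^d}A(|\Ds^\circ\vc{w}|)\,\dx=1$, whence $\|\Ds^\circ\vc{w}\|_{L^A}\le1$, $\|\Grad\vc{w}\|_{L^A}\le K$, and so $\int_{R^d}A(|\Grad\vc{w}|/K)\,\dx\le1$; iterating the $\Delta_2$ bound gives a pointwise estimate $A(|\Grad\vc{w}|)\le c_1\,A(|\Grad\vc{w}|/K)$, hence $\int A(|\Grad\vc{w}|)\,\dx\le c_1$, and absorbing $c_1$ into the argument of $A$ (again via $\Delta_2$) produces $c=c(d,a_1,a_2)$ with $\int_\Omega A(|\Grad\vc{w}|)\,\dx\le\int_\Omega A(c\,|\Ds^\circ\vc{w}|)\,\dx$.

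The only delicate ingredient is the boundedness of Calder\'on--Zygmund operators on $L^A$: this is where the full strength of \eqref{S4} --- the $\nabla_2$ condition, not merely $\Delta_2$ --- is indispensable, and it is why the endpoints $L^1$ and $L^\infty$, where the deviatoric Korn inequality fails, must be excluded. Because $\vc{w}$ vanishes on $\partial\Omega$, no conformal Killing fields enter and no restriction on $d$ beyond $d\ge2$ is needed. Everything else --- the linear algebra of the symbol, the identification of the multiplier operator with $\Grad$, and the modular bookkeeping --- is routine once $\Delta_2\cap\nabla_2$ is assumed.
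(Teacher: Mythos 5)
The paper does not prove this lemma; it is cited verbatim from Breit, Cianchi, and Diening \cite[Theorem 3.1]{BrCiDi}, so there is no internal proof to compare against. Your overall strategy---expressing $\Grad\vc{w}$ as a Calder\'on--Zygmund transform of the traceless symmetric gradient via a degree-zero Fourier multiplier (the injectivity computation for the symbol is correct), and then invoking Orlicz-space boundedness of such operators under the index conditions encoded in \eqref{S4}---is the right framework and close in spirit to the cited source.

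There is, however, a real gap in the passage from the norm estimate to the modular one. The asserted inequality $\intO{A(|\Grad\vc{w}|)}\le\intO{A(c\,|\mathbb{D}^\circ\vc{w}|)}$, where $\mathbb{D}^\circ\vc{w}=\Ds\vc{w}-\frac1d\Div\vc{w}\,\mathbb{I}$, is \emph{not} homogeneous in $\vc{w}$ unless $A$ is a power, so you cannot ``normalise so that $\intO{A(|\mathbb{D}^\circ\vc{w}|)}=1$'' by rescaling $\vc{w}$: the inequality for $\mu\vc{w}$ is a genuinely different statement, and there is no group action that reduces the general case to the normalised one. The fix is to normalise $A$ rather than $\vc{w}$: put $t=\intO{A(|\mathbb{D}^\circ\vc{w}|)}$ and replace $A$ by $A_t:=A/t$. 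Then $\intO{A_t(|\mathbb{D}^\circ\vc{w}|)}=1$, $A_t$ satisfies \eqref{S4} with the \emph{same} constants $a_1,a_2$, and so the Calder\'on--Zygmund norm bound on $L^{A_t}$ holds with a constant $K=K(d,a_1,a_2)$ that is \emph{uniform in $t$}. This uniformity across the family $\{A/t:t>0\}$ is precisely what separates a modular estimate from a norm estimate, and it must be stated and used; as written your argument silently assumes it. Note also that the final absorption $c_1\,A(z)\le A(cz)$ uses the lower half of \eqref{S4}, i.e. the $\nabla_2$-type bound $a_1 A(z)\le A(2z)$, not $\Delta_2$ as you write. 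Alternatively, one sidesteps the whole issue by proving the modular bound for Calder\'on--Zygmund operators directly from a good-$\lambda$ inequality and the layer-cake formula together with the modular maximal-function estimate, which is closer to how such results are actually established in the literature you allude to.
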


Combining Lemmas \ref{WUL1}, \ref{WUL2} with \eqref{WU4} we may infer that  

\begin{equation} \label{WU5}
\begin{split}
&\intO{\mathcal{E}\left( \vr, \vu \ \Big|\ \tvr, \tvu \right) (\tau, \cdot) } + 
\int_0^\tau \intO{ \tilde{A}_R (|\tvu - \vu|) } \dt
\\
&+\int_0^\tau \int_{\Gamma_{out}} \left[ P(\vr) - P'(\tvr) (\vr - \tvr) - P(\tvr)  \right]  \vu_B \cdot \vc{n} \ \D S_x \dt
+ \int_{\Ov{\Omega}} 1 \D \ \mathfrak{E} (\tau) \\	
&\leq c\left(\| \Grad \tvu \|_{L^\infty} \right) \left[  
\int_0^\tau \intO{ \mathcal{E}\left(\vr, \vu \ \Big|\ \tvr, \tvu \right) } \dt + 
\int_0^\tau \left(\int_{\Ov{\Omega}} \D \ \mathfrak{E} (t) \right) \dt     \right]
\\ 
&+ \int_0^\tau \intO{ \left( \frac{\vr}{\tvr} - 1 \right) (\tvu - \vu) \cdot \Div \widetilde{\mathbb{S}} } \dt,
\end{split}
\end{equation}
where $\tilde{A}_R$ is a Young function obtain by a simple rescaling of $A_R$ in \eqref{WU4}. 

Finally, we observe that 
\[
\intO{ \Big| \frac{\vr}{\tvr} - 1 \Big| \Big| \tvu - \vu \Big|
 } = \int_{\{ \vr < \delta \} } \Big| \frac{\vr}{\tvr} - 1 \Big| \Big| \tvu - \vu \Big| \ \dx + 
\int_{\{ \vr \geq \delta \} } \Big| \frac{\vr}{\tvr} - 1 \Big| \Big| \tvu - \vu \Big| \ \dx
\]
for any $\delta > 0$, where, on one hand,  
\[
\int_{\{ \vr  \geq \delta \} } \Big| \frac{\vr}{\tvr} - 1 \Big| \Big| \tvu - \vu \Big|
\dx \leq c(\delta) \intO{ \E \left( \vr, \vm \Big| \tvr , \tvu \right) }.
\]
On the other hand, 
\[
\left| \int_{\{ \varrho < \delta \} } \Big| \frac{\vr}{\tvr} - 1 \Big| \Big| \tvu - \vu \Big|
\dx \right| \leq \frac{1}{2} \intO{ \tilde{A}_R \left( |\tvu - \vu| \right) } + c(\delta, \tilde{A}_R) \intO{ \E \left( \vr, \vm \Big| \tvr , \tvu \right) }. 
\]
Thus applying the standard Gronwall argument to \eqref{WU5} we obtain the desired conclusion: 
\[
\vr = \tvr, \ \vu = \tvu, \ \mathfrak{R} = \mathfrak{E} = 0.
\]

We have proved the following result: 

\begin{Theorem} [{\bf Weak--strong uniqueness}] \label{WUT1}

Let $\Omega \subset R^d$ be a bounded Lipschitz domain. 
Suppose that $p$ and $\mathbb{S}$ satisfy the structural hypotheses \eqref{S2}, \eqref{S3}, and \eqref{S5}.
Let $[\vr, \vu]$ be a dissipative solution 
in the sense of Definition \ref{WD1}, and let $[\tvr, \tvu]$ be a strong solution of the same problem belonging to the 
class 
\[
\tvu \in C^1([0,T] \times \Ov{\Omega}; R^d),\ 
\tvr \in C^1([0, T] \times \Ov{\Omega}), \ \inf_{(0,T) \times \Omega} \tvr > 0, 
\]
with the stress tensor $\tilde{\mathbb{S}} \in C([0,T] \times \Ov{\Omega}; R^{d \times d}_{\rm sym})$,\ 
$\Div \tilde{\mathbb{S}} \in C([0,T] \times \Ov{\Omega}; R^{d})$.

Then 
\[
\vr = \tvr, \ \vu = \tvu \ \mbox{in}\ (0,T) \times \Omega,\ \mathfrak{E} = \mathfrak{R} = 0.
\]

\end{Theorem}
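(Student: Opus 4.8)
The plan is to run the relative energy machinery of Proposition \ref{RRP1}, with the strong solution $[\tvr, \tvu]$ inserted as the smooth test pair; this is legitimate precisely because, by hypothesis, $[\tvr, \tvu]$ lies in the class \eqref{RR6}. First I would use that $[\tvr, \tvu]$ solves the momentum balance classically, writing $\partial_t(\tvr\tvu) + \Div(\tvr\tvu\otimes\tvu) + \Grad p(\tvr) = \Div\widetilde{\mathbb{S}}$ and integrating by parts via $\int_\Omega (\tvu - \vu)\cdot\Div\widetilde{\mathbb{S}} = -\int_\Omega \Ds(\tvu - \vu):\widetilde{\mathbb{S}}$ to pass from \eqref{RR5} to \eqref{WU1}; since the continuity equation for $\tvr$ also holds pointwise, the remainder term carrying $\partial_t\tvr + \Div(\tvr\tvu)$ vanishes, and since $\tvr = \vr_B$ on $\Gamma_{\rm in}$ the corresponding inflow boundary integral is identically zero, leaving \eqref{WU2}.

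Next I would dispatch the low-order terms on the right of \eqref{WU2}. The quadratic convective term $\vr(\tvu - \vu)\otimes(\tvu - \vu):\Grad\tvu$ and the pressure Taylor remainder $[p(\vr) - p'(\tvr)(\vr - \tvr) - p(\tvr)]\Div\tvu$ are each bounded by $c(\|\Grad\tvu\|_{L^\infty})\int_\Omega \mathcal{E}(\vr,\vu\,|\,\tvr,\tvu)$, using the coercivity \eqref{S2a} of $P$ together with $\inf\tvr>0$; the Reynolds term $-\int_{\Ov\Omega}\Grad\tvu : \dd\mathfrak{R}$ is controlled by $\|\Grad\tvu\|_{L^\infty}\int_{\Ov\Omega}\dd\,{\rm tr}[\mathfrak{R}]$, which the compatibility hypothesis \eqref{W6} bounds by a constant times $\int_{\Ov\Omega}\dd\mathfrak{E}$. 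This produces \eqref{WU2a}. The viscous contribution is then treated algebraically: regrouping $F(\Ds\vu) + F^*(\mathbb{S}) + \widetilde{\mathbb{S}}:(\Ds\tvu - \Ds\vu) - \mathbb{S}:\Ds\tvu$ and using Fenchel--Young in the form $F(\Ds\tvu) + F^*(\mathbb{S}) \geq \mathbb{S}:\Ds\tvu$, what remains is $F(\Ds\vu) - \widetilde{\mathbb{S}}:(\Ds\vu - \Ds\tvu) - F(\Ds\tvu)$, which is non-negative by convexity and, by the structural coercivity \eqref{S5} with $R$ depending on $\|\tvu\|_{L^\infty}$ and $\|\widetilde{\mathbb{S}}\|_{L^\infty}$, bounded below by $\int_\Omega A_R\!\big(|\Ds(\vu - \tvu) - \tfrac1d\Div(\vu-\tvu)\mathbb{I}|\big)$.

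Since $A_R$ obeys the $\Delta^2_2$--condition, I would then apply Lemma \ref{WUL2} (Korn in Orlicz form) followed by Lemma \ref{WUL1} (Talenti's Poincaré in Orlicz form), both available because $\vu - \tvu \in W^{1,q}_0(\Omega;R^d)$, to bound this dissipation below by $\int_\Omega \tilde A_R(|\vu - \tvu|)$ with $\tilde A_R$ a rescaling of $A_R$; this is \eqref{WU5}. It then remains to absorb the one surviving right-hand term $\int_0^\tau\int_\Omega(\tfrac{\vr}{\tvr} - 1)(\tvu - \vu)\cdot\Div\widetilde{\mathbb{S}}$ by splitting $\Omega$ into $\{\vr \geq \delta\}$, where $|\tfrac{\vr}{\tvr} - 1||\tvu - \vu|$ is pointwise $\leq c(\delta)\,\mathcal{E}(\vr,\vu\,|\,\tvr,\tvu)$ because $\vr|\tvu - \vu|^2$ is comparable to $|\tvu - \vu|^2$ there, and $\{\vr < \delta\}$, where Young's inequality in Orlicz form gives $|\tfrac{\vr}{\tvr} - 1||\tvu - \vu| \leq \tfrac12\tilde A_R(|\tvu - \vu|) + c(\delta,\tilde A_R)\,\mathcal{E}(\vr,\vu\,|\,\tvr,\tvu)$, the half of $\tilde A_R$ being absorbed into the left-hand dissipation. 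What is left is a Gronwall inequality for $y(\tau) = \int_\Omega\mathcal{E}(\vr,\vu\,|\,\tvr,\tvu)(\tau) + \int_{\Ov\Omega}\dd\mathfrak{E}(\tau)$ with $y(0)=0$ (same initial data), the $\Gamma_{\rm out}$ boundary integral on the left being non-negative by convexity of $P$; hence $y\equiv 0$, which forces $\vr = \tvr$, $\vu = \tvu$ on $(0,T)\times\Omega$ and $\mathfrak{E} = \mathfrak{R} = 0$. The main obstacle is exactly the $\Div\widetilde{\mathbb{S}}$ term near vacuum: the relative energy controls only the degenerate quantity $\vr|\vu - \tvu|^2$, so one genuinely needs the viscous dissipation to control $\vu - \tvu$ in an Orlicz norm independently of $\vr$, which is why the $\Delta^2_2$--structure of $A_R$ and the Orlicz Korn--Poincaré inequalities of Lemmas \ref{WUL1} and \ref{WUL2} are indispensable.
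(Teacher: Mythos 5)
Your proposal follows essentially the same route as the paper's proof: starting from the relative energy inequality of Proposition \ref{RRP1} with the strong solution as test pair, reducing to \eqref{WU2} by cancellation and the vanishing of the $\Gamma_{\rm in}$ integral, regrouping the viscous terms via Fenchel--Young and the coercivity \eqref{S5}, upgrading to an Orlicz-norm control of $\vu-\tvu$ through Lemmas \ref{WUL2} and \ref{WUL1}, splitting the $\Div\widetilde{\mathbb{S}}$ remainder across $\{\vr<\delta\}$ and $\{\vr\geq\delta\}$, and closing with Gronwall. The only cosmetic difference is that you make explicit the cancellation of the inflow boundary term via $\tvr=\vr_B$ on $\Gamma_{\rm in}$, which the paper leaves implicit when passing to \eqref{WU2}.
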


\section{Concluding remarks}
\label{D}

A short inspection of the proofs shows that both compatibility and weak--strong uniqueness property hold if the defect compatibility condition 
\eqref{W6} is weakened to 
\begin{equation} \label{D1}
{\rm tr}[\mathfrak{R}] \leq \Ov{d} \mathfrak{E},\ 
\ \mbox{for a certain constant}\ \Ov{d} > 0.
\end{equation}
The weak formulation can be then written in a concise way as: 
\begin{equation} \label{D2}
\begin{split}
\left[ \intO{ \vr \varphi } \right]_{t = 0}^{t = \tau} &+ 
\int_0^\tau \int_{\Gamma_{\rm out}} \varphi \vr \vu_B \cdot \vc{n} \ \D \ S_x 
+ 
\int_0^\tau \int_{\Gamma_{\rm in}} \varphi \vr_B \vu_B \cdot \vc{n} \ \D \ S_x\\ &= 
\int_0^\tau \intO{ \Big[ \vr \partial_t \varphi + \vr \vu \cdot \Grad \varphi \Big] } \dt,\ \vr(0, \cdot) = \vr_0, 
\end{split}
\end{equation}
for any $\varphi \in C^1([0,T] \times \Ov{\Omega})$;
\begin{equation} \label{D3}
\begin{split}
\left[ \intO{ \vr \vu \cdot \bfphi } \right]_{t=0}^{t = \tau} &= 
\int_0^\tau \intO{ \Big[ \vr \vu \cdot \partial_t \bfphi + \vr \vu \otimes \vu : \Grad \bfphi 
+ p(\vr) \Div \bfphi - \mathbb{S} : \Grad \bfphi \Big] }\\
&+ \int_0^\tau \int_{{\Omega}} \Grad \bfphi : \D \ \mathfrak{R}(t) \ \dt,\ \vr \vu(0, \cdot) = \vm_0,
\end{split}
\end{equation}
for any function $\bfphi \in C^1([0,T] \times \Ov{\Omega}; R^d)$, $\bfphi|_{\partial \Omega} = 0$;
\begin{equation} \label{D4}
\begin{split}
&\left[ \intO{\left[ \frac{1}{2} \vr |\vu - \vu_B|^2 + P(\vr) \right] } \right]_{t = 0}^{ t = \tau} + 
\int_0^\tau \intO{ \Big[ F(\Ds \vu) + F^* (\mathbb{S}) \Big] } \dt\\  
&+\int_0^\tau \int_{\Gamma_{\rm out}} P(\vr)  \vu_B \cdot \vc{n} \ \D S_x \dt +
\int_0^\tau \int_{\Gamma_{\rm in}} P(\vr_B)  \vu_B \cdot \vc{n} \ \D S_x \dt
+ \frac{1}{\Ov{d}} \int_{\Ov{\Omega}}  \D \ {\rm tr}[\mathfrak{R}] (\tau) \\	
\leq
&- 
\int_0^\tau \intO{ \left[ \vr \vu \otimes \vu + p(\vr) \mathbb{I} \right]  :  \Grad \vu_B } \dt + \int_0^\tau \intO{ {\vr} \vu  \cdot \vu_B \cdot \Grad \vu_B  } 
\dt + \int_0^\tau \intO{ \mathbb{S} : \Grad \vu_B } \dt \\ &- 
\int_0^\tau \int_{\Ov{\Omega}} \Grad \vu_B : \D \ \mathfrak{R}(t) \dt, \ \mbox{for some}\ \Ov{d} > 0.
\end{split}
\end{equation}
Note that the energy defect measure $\mathfrak{E}$ is entirely eliminated and the only ``free'' quantity in \eqref{D2}--\eqref{D4}
is the Reynolds stress $\mathfrak{R} \in L^\infty(0,T; \mathcal{M}^+(\Ov{\Omega}; R^d))$. This new definition is in fact 
\emph{equivalent} to Definition \ref{WD1} as one can always define the ``energy defect'' as 
\[
\mathfrak{E} = \frac{1}{\Ov{d}}{\rm trace}[\mathfrak{R}].
\]

Convexity of the pressure $p$ was necessary for the Reynolds stress $\mathfrak{R}$ to be a positively semi-definite tensor.
From the point of view of physics, hypothesis \eqref{S2} may seem too restrictive.
In particular, the physically relevant case of 
the isothermal pressure $p(\vr) = \theta \vr$, $\theta > 0$ is not included. A brief inspection on the proofs reveals that all principal results remain valid for any EOS of the form 
\[
p(\vr) + a \vr, \ a \geq 0
\]
as long as $p$ satisfies \eqref{S2}.

\section{Appendix}

Our goal is to show the following result.

\begin{Lemma} \label{WANL1}

Let $Q = (0,T) \times \Omega$, where $\Omega \subset R^d$ is a bounded domain. Suppose that 
\[
r_n \to r \ \mbox{weakly in}\ L^p(Q), \ v_n \to v \ \mbox{weakly in}\ L^q(Q),\ p > 1, q > 1, 
\]
and
\[
r_n v_n \to w \ \mbox{weakly in}\ L^r(Q), \ r >  1.
\]
In addition, let 
\[
\partial_t r_n = \Div \vc{g}_n + h_n\ \mbox{in}\ \mathcal{D}'(Q),\ \| \vc{g}_n \|_{L^s(Q; R^d)} \aleq 1,\ s > 1,\ 
h_n \ \mbox{precompact in}\ W^{-1,z},\ z > 1, 
\]
and
\[
\left\| \Grad v_n \right\|_{\mathcal{M}(Q; R^d)} \aleq 1 \ \mbox{uniformly for}\ n \to \infty.
\]

Then 
\[
w = r v \ \mbox{a.a. in}\ Q.
\]

\end{Lemma}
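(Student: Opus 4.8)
The plan is to identify the two weak limits as distributions: it suffices to show $\int_Q r_n v_n\,\varphi\,\dd x\,\dd t\to\int_Q r v\,\varphi\,\dd x\,\dd t$ for every $\varphi\in C^\infty_c((0,T)\times\Omega)$, since $w$ is the weak $L^r$-limit of $r_nv_n$ and this then identifies $w$ with the measurable function $rv$ a.a. in $Q$. Fix such a $\varphi$, localized in $(\delta,T-\delta)\times\Omega'$ with $\Omega'\Subset\Omega$; this allows mollification in the space variable inside $\Omega'$. The mechanism is compensated compactness: the hypotheses say that $r_n$ is ``compact in $t$'' while $v_n$ is ``compact in $x$'', and these must be played off against each other.

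First I would extract the time-compactness of $r_n$. From $\partial_t r_n=\Div\vc g_n+h_n$ one gets $\partial_t r_n$ bounded in $L^1(0,T;W^{-1,m}(\Omega))$ with $m:=\min\{s,z,p\}>1$; combined with the bound on $r_n$ in $L^p(0,T;L^p(\Omega))$ and the compact embedding $L^p(\Omega)\hookrightarrow\hookrightarrow W^{-1,m}(\Omega)$ on the bounded domain, the Aubin--Lions--Simon lemma gives, along a subsequence, $r_n\to r$ \emph{strongly} in $L^p(0,T;W^{-1,m}(\Omega))$. Next I exploit the space-compactness of $v_n$: with $\omega_\kappa$ a spatial mollifier and $v_n^\kappa:=v_n\ast_x\omega_\kappa$, the bound $\|\Grad v_n\|_{\mathcal M(Q)}\le C$ gives $\|v_n-v_n^\kappa\|_{L^1((0,T)\times\Omega')}\le C\kappa$ uniformly in $n$, while $\|v_n^\kappa\|_{L^q((0,T)\times\Omega')}\le C$ uniformly in $n$ and $\kappa$ (via $|v_n^\kappa|\le C\,M_x v_n$ and the $L^q$-boundedness of the spatial maximal function), and for each fixed $\kappa$ the field $v_n^\kappa$ is bounded in $L^q(0,T;W^{1,\infty}(\Omega'))$ with $v_n^\kappa\rightharpoonup v^\kappa:=v\ast_x\omega_\kappa$.

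Then I split $\int_Q r_n v_n\varphi=\int_Q r_n v_n^\kappa\varphi+\int_Q r_n(v_n-v_n^\kappa)\varphi$. For the first term, with $\kappa$ fixed, $v_n^\kappa\varphi$ is bounded in and converges weakly in $L^q(0,T;W^{1,m'}_0(\Omega'))$ while $r_n\to r$ strongly in $L^p(0,T;W^{-1,m}(\Omega))$, so the space--time duality pairing passes to the limit. For the error term I split the domain according to $\{r_n\le k\}$ and $\{r_n> k\}$: on the first set $|r_n|\le k$ combines with $\|v_n-v_n^\kappa\|_{L^1}\le C\kappa$ to give a contribution $\le Ck\kappa$; on the second set $|r_n(v_n-v_n^\kappa)|\le|r_nv_n|+|r_nv_n^\kappa|$, and $|\{r_n>k\}|\le k^{-p}\|r_n\|_{L^p}^p$ together with the $L^r$-bound on $r_nv_n$ (here $r>1$ is used) and the uniform $L^q$-bound on $v_n^\kappa$ yields a contribution $o_k(1)$, uniform in $n$ and $\kappa$. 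Letting $n\to\infty$, then $\kappa\to0$ (using $v^\kappa\to v$ locally and that $\varphi$ is compactly supported), then $k\to\infty$, gives $\lim_n\int_Q r_nv_n\varphi=\int_Q rv\varphi$; independence of the subsequence then yields $w=rv$ a.a. in $Q$.

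The main obstacle is the error term: it must be made small \emph{uniformly in $n$}, which forces one to combine the BV-control of $v_n$ (supplying an $L^1$-small mollification remainder), the equi-integrability furnished jointly by the $L^p$-bound on $r_n$ and — crucially — the $L^r$-bound on the products $r_nv_n$ with $r>1$, and a careful matching of Lebesgue exponents so that every Hölder pairing closes on the bounded domain; the role of $r>1$ is exactly to absorb the part of the integral where $r_n$ is large. Conceptually this is the div--curl structure of the space--time fields $U_n=(r_n,-\vc g_n)$ and $V_n=(v_n,0,\dots,0)$ on $Q$, whose space--time divergence and curl are $h_n$ (precompact in $W^{-1,z}$) and $\pm\Grad v_n$ (a bounded measure, hence precompact in $W^{-1,\tilde z}$ for some $\tilde z>1$), so that a div--curl lemma applied to $U_n\cdot V_n=r_nv_n$ is an alternative route.
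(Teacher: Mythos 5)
Your proposal has the right compensated–compactness mechanism in mind and you correctly identify the div–curl structure at the end, but the primary argument as written has a gap in the main (first) term, and the choice of which variable to truncate is the wrong one.

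The core problem is the pairing in the fixed-$\kappa$ term $\int_Q r_n v_n^\kappa\varphi$. You pair $r_n\to r$ strongly in $L^p(0,T;W^{-1,m}(\Omega))$ against $v_n^\kappa\varphi$ bounded and weakly convergent in $L^q(0,T;W^{1,m'}_0(\Omega'))$. For this duality pairing to pass to the limit (e.g.\ via $\int_0^T\langle r_n-r,\phi_n\rangle\,\dt$ plus $\int_0^T\langle r,\phi_n\rangle\,\dt$) you need the time exponents to be H\"older conjugate, i.e.\ $1/p+1/q\le 1$. This is \emph{not} among the hypotheses — only $p>1$ and $q>1$ are assumed, with no constraint relating them, and the extra $L^r$ bound on the product $r_nv_n$ does not supply the missing conjugacy for the $r_n$-times-$v_n^\kappa$ pairing (since $v_n^\kappa$ is $v_n\ast_x\omega_\kappa$, not $(r_nv_n)\ast_x\omega_\kappa$). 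Your truncation of $r_n$ at level $k$ is applied only to the remainder $\int r_n(v_n-v_n^\kappa)\varphi$ and does not repair the main term. There is a second, smaller issue: ``$h_n$ precompact in $W^{-1,z}$'' is precompactness in the space–time negative Sobolev space, and a generic element of $W^{-1,z}(Q)$ contains a $\partial_t(\cdot)$ component; this does not automatically translate into the Bochner bound $\partial_t r_n\in L^1(0,T;W^{-1,m}(\Omega))$ that the Aubin--Lions--Simon lemma requires. (For the application at hand $h_n=0$, but the lemma is stated more generally.)

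The paper's proof sidesteps both difficulties by truncating $v_n$ rather than $r_n$: with $T_k(v_n)$ one gets a sequence bounded in $L^\infty(Q)$ whose gradient is still a bounded measure (since $T_k$ is $1$-Lipschitz), so the space–time div–curl lemma applies with $\vc{U}_n=[r_n,-\vc{g}_n]$ bounded in $L^{\min\{p,s\}}$ and $\vc{V}_n=[T_k(v_n),0,\dots,0]$ bounded in $L^\infty$, and the $L^1$–$L^\infty$ pairing closes without any constraint linking $p$ and $q$. Moreover ${\rm DIV}_{t,x}\vc{U}_n=h_n$ is then fed directly to the div–curl hypothesis, so the $W^{-1,z}(Q)$-precompactness of $h_n$ is used in exactly the form it is given. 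Note also that truncating $r_n$, as you do, destroys the transport structure ($\partial_t T_k(r_n)=T_k'(r_n)(\Div\vc{g}_n+h_n)$ is no longer a divergence plus a compact term), whereas truncating $v_n$ preserves the BV bound — this is why the truncated variable must be $v_n$. The remainder estimates you write for the $\{r_n>k\}$ region using $|\{r_n>k\}|\le k^{-p}\|r_n\|_{L^p}^p$ and the $L^r$ bound on the products are analogous to the paper's tail estimates for $v_n-T_k(v_n)$ (Chebyshev on $\{|v_n|\ge k\}$ and $L^q$, $L^r$ bounds), so that part of your reasoning is sound; the gap is confined to the main term and the choice of truncated variable.
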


\begin{proof}

First, we introduce a cut--off function 
\[
T_k (v) = k \mathcal{T} \left( \frac{v}{k} \right),\ \mathcal{T} \in C^\infty \cap C_B (R),\ 
\mathcal{T}(Z) = \mathcal{T}(-Z),\ T(Z) = Z \ \mbox{if}\ |Z| \leq 1, \ 0 \leq T'(Z) \leq 1. 
\]
Next, write  
\[
v_n = T_k(v_n) + \Big( v_n - T_k(v_n) \Big), 
\]
and
\[
r_n v_n = r_n T_k(v_n) + r_n \Big( v_n - T_k(v_n) \Big). 
\]
Passing to a subsequence (not relabeled) we may assume 
\[
T_k(v_n) \to \Ov{T_k(v)} \ \mbox{weakly-(*) in} \ L^\infty(Q),\ 
r_n T_k(v_n) \to w_k \ \mbox{weakly in}\ L^r(Q) \ \mbox{as}\ n \to \infty. 
\]

We claim that is is enough to show  
\[
w_k = r \Ov{T_k(v)} \ \mbox{a.a. in}\ Q \ \mbox{for any}\ k \to \infty.
\]
Indeed we have 
\[
\int_Q |v_n - T_k(v_n) | \dxdt \leq \int_{|v_n \geq k|} |v_n| \leq 
|\{ v_n \geq k \}|^{\frac{1}{q'}} \| v_n \|_{L^q(Q)} \to 0 
\ \mbox{as}\ k \to \infty \ \mbox{uniformly in}\ n,
\]
and 
\[
\left\| v - \Ov{T_k(v)} \right\|_{L^1(Q)} \leq \liminf_{n \to \infty} 
\left\| v_n - {T_k(v_n)} \right\|_{L^1(Q)} \to 0 \ \mbox{as}\ k \to \infty.
\]
Similarly 
\[
\begin{split}
\int_Q &\left| r_n (v_n - T_k(v_n) ) \right| \ \dxdt \leq 
\int_{|v_n \geq k|} |r_n v_n| \\ &\leq |\{ v_n \geq k \}|^{\frac{1}{r'}} \| r_n v_n \|_{L^r(Q)} \to 0 
\ \mbox{as}\ k \to \infty \ \mbox{uniformly in}\ n.
\end{split}
\]

Since
\[
\| \Grad v_n \|_{{\mathcal{M}(Q; R^d)}} \aleq 1 \ \Rightarrow \ 
\| \Grad T_k (v_n) \|_{{\mathcal{M}(Q; R^d)}} \aleq 1 \ \mbox{uniformly for}\ n \to \infty,
\]
it is enough to show the conclusion under the assumption 
\[
v_n \to v \ \mbox{weakly-(*) in}\ L^\infty(Q).
\]
To this end, we apply Div-Curl lemma to the vector fields
\[
\begin{split}
\vc{U}_n &= [r_n, - \vc{g}_n] : Q \to R^{d + 1},\ 
{\rm DIV}_{t,x} \vc{U}_n = \partial_t r_n + \Div \vc{g}_n = h_n,\\
\vc{U}_n &\to \vc{U} = [r, \vc{g}] \ \mbox{weakly in}\ L^{{\rm min}\{s,p\}} (Q),
\end{split}
\]
and
\[
\vc{V}_n = [v_n, 0 ]: Q \to R^{d+1},\ 
{\rm CURL}_{t,x} \vc{V}_n \approx \Grad v_n \ \mbox{bounded in}\ \mathcal{M}(Q; R^{d \times d}).
\] 
Applying Div--Curl Lemma we obtain the desired conclusion.

\end{proof}

\def\cprime{$'$} \def\ocirc#1{\ifmmode\setbox0=\hbox{$#1$}\dimen0=\ht0
  \advance\dimen0 by1pt\rlap{\hbox to\wd0{\hss\raise\dimen0
  \hbox{\hskip.2em$\scriptscriptstyle\circ$}\hss}}#1\else {\accent"17 #1}\fi}


\end{document}